\def\blfootnote{\xdef\@thefnmark{}\@footnotetext}
\newcommand\ccnote{
    \blfootnote{\ccLogo\, \ccAttribution\,\, Licensed under a Creative Commons Attribution License (CC-BY).}
}
\numberwithin{equation}{section}
\renewcommand{\leq}{\leqslant}
\renewcommand{\geq}{\geqslant}
\renewcommand{\mathbb}{\varmathbb}
\newtheorem{theorem}{Theorem}[section]
\newtheorem{lemma}[theorem]{Lemma}
\newtheorem{corollary}[theorem]{Corollary}
\newtheorem{proposition}[theorem]{Proposition}
\newtheorem{definition}[theorem]{Definition}
\newtheorem{remark}[theorem]{Remark}
\newtheorem{example}[theorem]{Example}
\newtheorem{examples}[theorem]{Examples}
\newcommand\R{\mathbb{R}}
\newcommand\C{\mathbb{C}}
\newcommand\Z{\mathbb{Z}}
\newcommand\Energy{{\mathcal{E}}}
\newcommand\eps{\varepsilon}
\address{Laura Cladek, University of California, Los Angeles, Department of Mathematics, 405 Hilgard Ave, Los Angeles CA 90095}
\email{cladekl@math.ucla.edu}
\address{Terence Tao, University of California, Los Angeles, Department of Mathematics, 405 Hilgard Ave, Los Angeles CA 90095} 
\email{tao@math.ucla.edu}
\begin{document}

\thispagestyle{empty}

\begin{minipage}{0.28\textwidth}
\begin{figure}[H]
%\centering
\includegraphics[width=2.5cm,height=2.5cm,left]{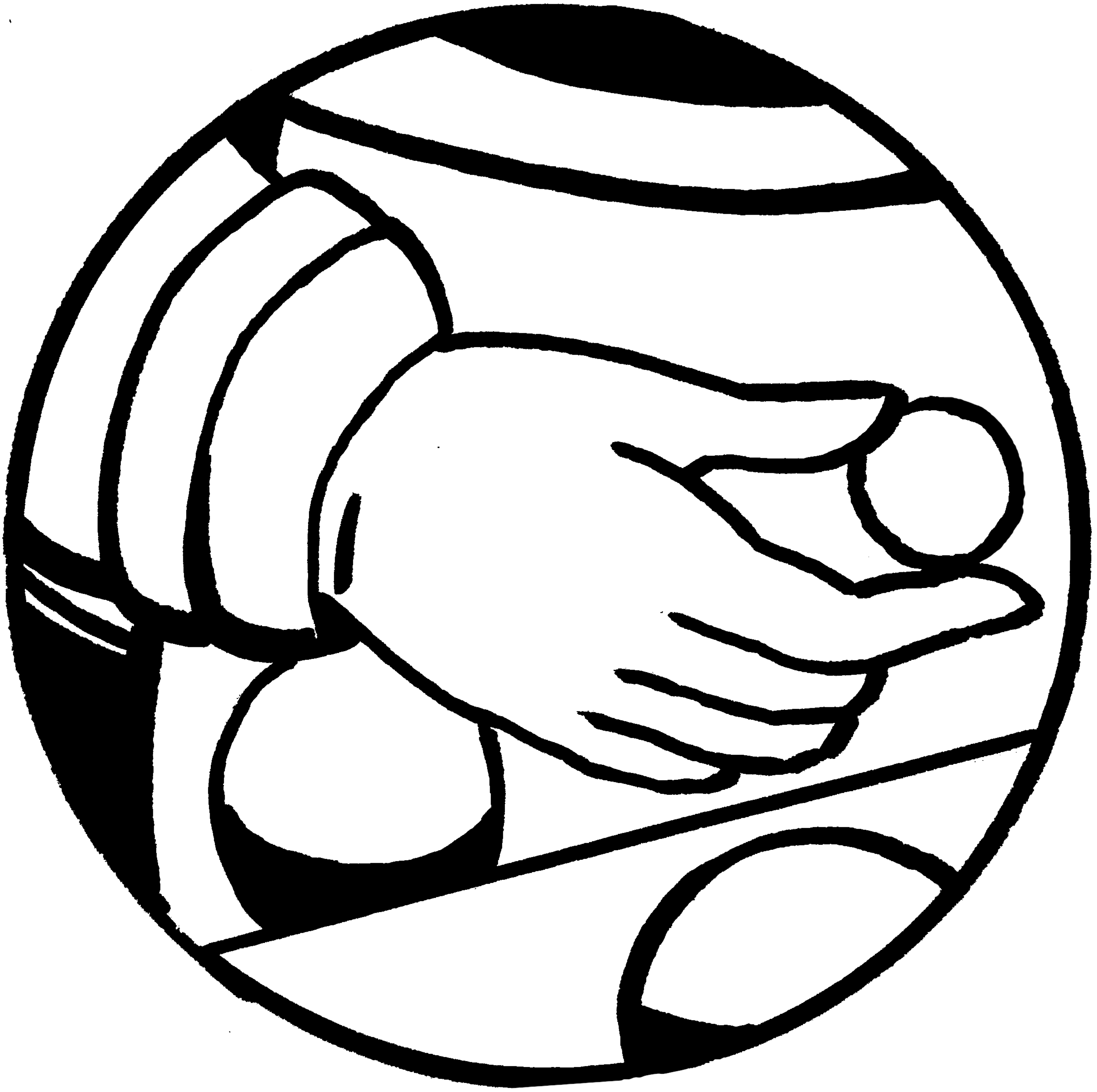}
\end{figure}
\end{minipage}
\begin{minipage}{0.7\textwidth} 
\begin{flushright}
Ars Inveniendi Analytica (2021), Paper No. 1, 38 pp.
\\
DOI 10.15781/gw9q-k252
\end{flushright}
\end{minipage}

\ccnote

\vspace{1cm}

%%      -------------------------------------------------------------------------------
%%      -------------------------- TITLE ----------------------------
%%      -------------------------------------------------------------------------------
%% Authors, please put here the full title of the article

\begin{center}
\begin{huge}
\textit{Additive energy of regular measures in one and higher dimensions, and the fractal uncertainty principle}

\end{huge}
\end{center}

\vspace{1cm}

%%      -------------------------------------------------------------------------------
%%      -------------------------- AUTHORS AND AFFILIATIONS ----------------------------
%%      -------------------------------------------------------------------------------
%% Authors, please put here your full names and affiliations

\begin{minipage}[t]{.28\textwidth}
\begin{center}
{\large{\bf{Laura Cladek}}} \\
\vskip0.15cm
\footnotesize{University of California, Los Angeles}
\end{center}
\end{minipage}
\hfill
\noindent
\begin{minipage}[t]{.28\textwidth}
\begin{center}
{\large{\bf{Terence Tao}}} \\
\vskip0.15cm
\footnotesize{University of California, Los Angeles}
\end{center}
\end{minipage}

\vspace{1cm}

\begin{center}
\noindent \em{Communicated by Larry Guth}
\end{center}
\vspace{1cm}

%%      -------------------------------------------------------------------------------
%%      -------------------------- BEGIN ABSTRACT ----------------------------
%%      -------------------------------------------------------------------------------
%% Authors, please put here the ABSTRACT and KEYBOARDS

\noindent \textbf{Abstract.} \textit{We obtain new bounds on the additive energy of (Ahlfors--David type) regular measures in both one and higher dimensions, which implies expansion results for sums and products of the associated regular sets, as well as more general nonlinear functions of these sets.  As a corollary of the higher-dimensional results we obtain some new cases of the fractal uncertainty principle in odd dimensions. 
}
\vskip0.3cm

\noindent \textbf{Keywords.} Fractal uncertainty principle, additive energy, Ahlfors--David regular sets.

\vspace{0.5cm}

%%      -------------------------------------------------------------------------------
%%      -------------------------- BEGIN ARTICLE ----------------------------
%%      -------------------------------------------------------------------------------
%% Authors, copy the body of your paper here

\section{Introduction}

This paper is primarily concerned with the estimation of additive energies of regular measures (in the sense of Ahlfors and David, localized to specific ranges of scales) on Euclidean spaces $\R^d$.  We recall the key definitions.  If $x \in \R^d$ and $r>0$, we let $B(x,r) = B^d(x,r) \coloneqq \{ y \in \R^d: |x-y| \leq r \}$ denote the closed ball of radius $r$ centered at $x$, where we use $||$ for the Euclidean norm on $\R^d$.

\begin{definition}[Regular sets]\label{reg-set}\cite{bourgain-dyatlov}  Let $0 \leq \delta \leq d$ with the ambient dimension $d$ an integer, let $0 < \alpha_0 < \alpha_1$ be scales, and let $C \geq 1$.  A closed non-empty subset $X$ of $\R^d$ is said to be \emph{$\delta$-regular} on scales $[\alpha_0, \alpha_1]$ with constant $C$ if there exists a Radon measure\footnote{All measures in this paper will be unsigned.} $\mu_X$ (which we call a \emph{$\delta$-regular measure} or simply \emph{regular measure} associated to $X$) obeying the following properties:
\begin{itemize}
\item[(i)] $\mu_X$ is supported on $X$ (thus $\mu_X(\R^d \backslash X) = 0$);
\item[(ii)] For every ball $B(x,r)$ of radius $r \in [\alpha_0,\alpha_1]$, we have the upper bound $\mu_X(B(x,r)) \leq C r^\delta$;
\item[(iii)] If in addition in (i) we have $x \in X$, then we have the matching lower bound $\mu_X(B(x,r)) \geq C^{-1} r^\delta$.
\end{itemize}
\end{definition}

Examples of regular sets include Cantor sets, smooth compact submanifolds of $\R^d$, and $r$-neighbourhoods of such sets for $0 < r \leq \alpha_0$; see for instance \cite{dyatlov-survey} for such examples and further discussion.  Now we review the notion of additive energy at a given scale, as given for instance in \cite{dyatlov-zahl}.

\begin{definition}[Additive energy]\label{energy-def} Let $\mu$ be a Radon measure on $\R^d$, and let $r>0$.  The \emph{additive energy} $\Energy(\mu,r)$ of $\mu$ at scale $r$ is defined to be the quantity
\begin{equation}\label{energy-form}
\Energy(\mu,r) \coloneqq \mu^4\left( \{ (x_1,x_2,x_3,x_4) \in (\R^d)^4: |x_1+x_2-x_3-x_4| \leq r \}\right)
\end{equation}
where $\mu^4$ is the product measure on $(\R^d)^4$ formed from four copies of the measure $\mu$ on $\R^d$.
\end{definition}

By the Fubini--Tonelli theorem one can write
$$\Energy(\mu,r) = \int_{\R^d} \int_{\R^d} \int_{\R^d} \mu( B(x_1+x_2-x_3,r))\ d\mu(x_1) d\mu(x_2) d\mu(x_3)$$
so we have the trivial bound
\begin{equation}\label{triv-bound}
\Energy(\mu,r) \leq \mu(\R^d)^3 \sup_{x \in \R^d} \mu(B(x,r)).
\end{equation}
In particular, if $\mu = \mu_X$ is a regular measure associated to a $\delta$-regular set on scales $[\alpha,1]$ with constant $C$ that is supported on the unit ball $B(0,1)$, then we have the ``trivial bound''
\begin{equation}\label{triv}
\Energy(\mu,r) \leq C^4 r^\delta
\end{equation}
for any $\alpha \leq r \leq 1$.  

In the case when $\delta$ is an integer, this trivial bound can be sharp up to multiplicative constants.  Indeed, if $\mu = \mu_X$ is $\delta$-dimensional Lebesgue measure on the disk $X \coloneqq B^\delta(0,1) \times \{0\}^{d-\delta}$, then for any $0 < \alpha \leq r \leq 1$, one can verify that $\mu$ is regular on scales $[\alpha,1]$ with some constant\footnote{See Section \ref{notation-sec} for our asymptotic notation conventions.} $C = O_{d,\delta}(1)$, but that $\Energy(\mu,r) \sim_{d,\delta} r^\delta$.

However, when the dimension $\delta$ is not an integer one expects an improvement to the trivial bound \eqref{triv} in the asymptotic regime when $r$ goes to zero.  In one dimension $d=1$ this was achieved by Dyatlov and Zahl \cite{dyatlov-zahl}:

\begin{theorem}[Improved additive energy bound for regular measures in $\R$]\label{dz-main}\cite[Proposition 6.23]{dyatlov-zahl}  Let $0 < \delta < 1$, $C>1$ and $0 < \alpha < 1$.  Let $X \subset [0,1]$ be a $\delta$-regular set on scales $[\alpha,1]$, and let $\mu_X$ be an associated regular measure.  Then we have
$$ \Energy(\mu, r) \lesssim_{C,\delta} r^{\delta + \beta}$$
for any $\alpha \leq r < 1$, where $\beta > 0$ is of the form
\begin{equation}\label{beta}
\beta =\delta \exp\left( - K (1-\delta)^{-14} (1 + \log^{14} C) \right)
\end{equation}
for some absolute constant $K$.
\end{theorem}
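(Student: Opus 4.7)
The plan is to reduce $\Energy(\mu,r)$ to a discretized quadruple-count on an $r$-net of $X$, then argue by contradiction: if the count is too large, Balog--Szemer\'edi--Gowers and Pl\"unnecke--Ruzsa should force $X$ to carry approximate one-dimensional arithmetic structure at scale $r$, and a multi-scale pigeonhole exploiting $\delta \notin \Z$ should contradict $\delta$-regularity.

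First I would discretize. Let $X_r \subseteq X$ be a maximal $r$-separated subset; by $\delta$-regularity $|X_r| \sim_{C,\delta} r^{-\delta}$ and $\mu(B(x,r)) \sim_C r^\delta$ for each $x \in X_r$. A direct comparison of the integral in \eqref{energy-form} with a weighted sum over $X_r$ (each point carrying $\mu$-mass $\sim r^\delta$) yields
\begin{equation*}
\Energy(\mu,r) \sim_{C,\delta} r^{4\delta} \cdot N(r), \qquad N(r) := \#\{(x_1,x_2,x_3,x_4) \in X_r^4 : |x_1+x_2-x_3-x_4| \lesssim r\},
\end{equation*}
so the theorem reduces to $N(r) \lesssim_{C,\delta} r^{-3\delta+\beta}$. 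Assume for contradiction $N(r) \geq r^{-3\delta+\beta'}$ with $\beta' > 0$ (to be optimised against $\beta$).

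Next, apply the discretized Balog--Szemer\'edi--Gowers lemma at tolerance $r$ to produce $X' \subseteq X_r$ with $|X'| \gtrsim r^{-\delta + O(\beta')}$ whose $r$-approximate sumset has cardinality $\lesssim r^{-\delta - O(\beta')}$, so that $X'$ has doubling $K := r^{-O(\beta')}$ at scale $r$. Pl\"unnecke--Ruzsa propagates this to every iterated sumset $kX' - kX'$ with polynomial-in-$k$ losses, and a quantitative, scale-aware Freiman-type theorem on $\R$ (in the spirit of Bourgain's discretized ring theorem) places $X'$ within the $r$-neighborhood of a generalized arithmetic progression whose rank is controlled by $K$ and whose volume is essentially $r^{-\delta}$. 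Hence $X$ carries approximate one-dimensional arithmetic structure at the fine scale $r$. The final step invokes non-integer $\delta$: regularity fixes the $\rho$-covering number of $X$ to be $\sim_C \rho^{-\delta}$ at every scale $\rho \in [r,1]$, whereas the GAP structure forces the $\rho$-covering of $X'$ to look, at many scales $\rho$, either like a full $\rho$-interval (locally dimension $1$) or like an isolated cluster (locally dimension $0$). Distributing the $\log(1/r)$ available dyadic scales between these two alternatives by pigeonhole forces the effective dimension of $X$ on a significant set of scales to lie in $\{0,1\}$, contradicting $0<\delta<1$ and yielding a quantitative lower bound for the gap $\beta > 0$ over the trivial exponent.

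The main obstacle is the quantitative bookkeeping. The doubling $K = r^{-O(\beta')}$ is polynomially large in $1/r$, so classical Freiman is too weak; one needs a genuinely discretized version whose rank and GAP volume depend polynomially on $\log(1/r)$ rather than on $K$. Propagating the losses through iterated Pl\"unnecke--Ruzsa, through the discretized Freiman step (whose rank-vs-doubling dependence is polynomial), and through the multi-scale pigeonhole (which absorbs $\log(1/r)$ factors) produces the characteristic $(1-\delta)^{-14}$ and $\log^{14} C$ factors inside the exponential defining $\beta$. Tightly balancing these losses to extract a clean positive $\beta$ of the form \eqref{beta}, uniformly in $\alpha$ (so that every constant depends only on $C$ and $\delta$), is the heart of the argument.
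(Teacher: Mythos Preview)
This statement is not proved in the present paper; it is quoted from \cite[Proposition 6.23]{dyatlov-zahl}, and the paper only summarizes that argument as resting on ``an induction on scale argument and some major inverse theorems in additive combinatorics, such as the Bogulybov--Ruzsa theorem of Sanders''.  The paper's own contribution is the stronger Theorem~\ref{main}, proved by a different (order-theoretic) method.

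Comparing your sketch to the Dyatlov--Zahl strategy as described here: you have the right ingredients (discretization, BSG, a structure theorem, a contradiction with regularity), but you are missing the essential \emph{induction on scales} scaffolding, and without it the argument does not close.  In Dyatlov--Zahl (and in Section~\ref{induct-sec} of this paper) one first proves a \emph{slight gain at a single fixed scale} $r_0 = r_0(C,\delta,\eps)$---something like $\Energy(\mu|_{[-1,1]},r_0)\le \eps r_0^\delta$---and only \emph{then} iterates (Proposition~\ref{en-nearby}) to upgrade this to the power saving $r^{\delta+\beta}$ for all $r$.  The additive combinatorics is invoked once, at that fixed $r_0$, where the relevant doubling constant is $O_{C,\delta,\eps}(1)$ and Sanders' Bogolyubov--Ruzsa gives a progression of rank bounded independently of $r$; the contradiction is then simply that a $\delta$-regular set with $\delta<1$ is porous and cannot be approximately invariant under translation by a single long arithmetic progression.

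Your proposal instead runs BSG directly at the target scale $r$, producing a subset of doubling $K = r^{-O(\beta')}$.  This $K$ is polynomially large in $1/r$, so any Freiman- or Bogolyubov-type output has rank $\gtrsim (\beta'\log(1/r))^c$ for some fixed $c>0$, which diverges as $r\to 0$.  A GAP whose rank is comparable to $\log(1/r)$ does \emph{not} force the $\rho$-covering numbers of $X'$ to look locally $0$- or $1$-dimensional at most scales $\rho\in[r,1]$: with that many generators one can realize genuinely fractional effective dimension at every dyadic scale, so the multi-scale pigeonhole you describe does not contradict $\delta$-regularity and no uniform $\beta>0$ emerges.  You have essentially flagged this yourself (``classical Freiman is too weak''), but the remedy you propose---a discretized Freiman with rank merely polynomial in $\log(1/r)$---is precisely what cannot be afforded here.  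The repair is structural, not quantitative: run your contradiction argument once at a fixed $r_0(C,\delta,\eps)$ to obtain $\Energy(\mu,r_0)\le\eps r_0^\delta$, and then iterate as in Section~\ref{induct-sec}.
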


Among other things, this theorem was used to establish the first non-trivial case of the \emph{fractal uncertainty principle}, which we will return to later in this introduction.  The proof of Theorem \ref{dz-main} relied on an induction on scale argument and some major inverse theorems in additive combinatorics, such as the Bogulybov--Ruzsa theorem of Sanders \cite{sanders-br}.  Ultimately, the key point is that the $\delta$-regular set $X$ is ``porous'' and cannot exhibit approximate translation invariance along a medium-length arithmetic progression; on the other hand, additive combinatorics and induction on scales can be used to produce such an approximate translation invariance if $\mu$ has exceptionally high additive energy.

The bound \eqref{beta} on $\beta$ behaves quasipolynomially in $C$.  In \cite[\S 6.8.3]{dyatlov-zahl} the question was raised as to whether the exponent $\beta$ could be improved to be polynomial in $C$.  Our first main result answers this question in the affirmative.

\begin{theorem}[Further improvement to additive energy bounds for regular measures in $\R$]\label{main}  With the hypotheses of Theorem \ref{dz-main}, one can take $\beta$ to be of the form
$$ \beta = c \min(\delta,1-\delta) C^{-25}$$
for some absolute constant $c>0$.
\end{theorem}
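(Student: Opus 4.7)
The plan is to follow the broad architecture of the Dyatlov--Zahl proof of Theorem \ref{dz-main}, namely to argue by contradiction and run an induction on scales, but to replace each of its quasipolynomial-cost ingredients with a polynomial-cost substitute, and to track the $C$-dependence explicitly until the exponent $-25$ emerges. I would begin by assuming, toward a contradiction, that $\Energy(\mu_X, r) \geq c^{-1} r^{\delta+\beta}$ with $\beta = c\min(\delta,1-\delta) C^{-25}$ for some scale $\alpha \leq r < 1$. A pigeonholing argument over dyadic scales in $[r,1]$ locates an intermediate scale $r'$ and a cell at that scale in which a weighted version of $X$ carries almost all of the scale-$r$ additive energy relative to its scale-$r'$ mass. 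Discretizing $\mu_X$ at scale $r$ converts this into a purely combinatorial problem about an $r$-separated set $A \subset \R$ with $|A| \asymp (r'/r)^\delta$ points (with constants depending polynomially on $C$) carrying many approximate additive quadruples.

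The first polynomial-loss ingredient is a polynomial Balog--Szemer\'edi--Gowers theorem (in the form due to Schoen and its subsequent refinements), which extracts a subset $A' \subset A$ with $|A'| \gtrsim r^{O(\beta)}|A|$ and $|A'+A'| \lesssim r^{-O(\beta)} C^{O(1)} |A'|$. Pl\"unnecke--Ruzsa then controls the iterated sumsets $|kA' - kA'|$ polynomially in the doubling constant and $|A'|$. The second ingredient replaces Sanders' Bogolyubov--Ruzsa theorem: instead of forcing $A'-A'$ to contain a large Bohr set (which costs quasipolynomially in the doubling), I would run a covering/counting strategy in which an iterated sumset of $A'$ is forced to populate many of the ``holes'' that regularity of $X$ provides at scale $r$ inside the window of length $r'$. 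Since $\delta < 1$, the set $X$ avoids a positive proportion of $r$-balls in any such window, whereas the Pl\"unnecke-controlled iterated sumset of $A'$ must, after translation and rescaling, still be essentially contained in an $r$-neighbourhood of $X$; comparing the two yields a quantitative contradiction once $\beta$ is taken sufficiently small.

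The main obstacle will be the polynomial surrogate for Sanders' theorem. In the Dyatlov--Zahl proof, quasipolynomial losses enter precisely through the use of Sanders' theorem to place a large Bohr set inside a small-doubling sumset; any polynomial-cost replacement must avoid inverse theorems of this type and instead extract the needed ``approximate translation invariance'' of $X$ directly from the iterated-sumset bounds and the multiscale regularity. The exponent $-25$ should emerge by carefully tallying the polynomial losses in Balog--Szemer\'edi--Gowers, Pl\"unnecke--Ruzsa, and the covering/porosity step, balanced against the gain produced by the regularity mismatch between scales $r$ and $r'$, and if necessary iterated across $O(\log(1/\alpha))$ stages of an induction-on-scales bookkeeping in which either the energy drops by a factor $r^{\Omega(\beta)}$ or structure is extracted; the absence of any Sanders-type step ensures that the total loss at the end of the induction remains polynomial in $C$.
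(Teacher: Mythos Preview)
Your proposal has a genuine gap at its central step. You write that ``the Pl\"unnecke-controlled iterated sumset of $A'$ must, after translation and rescaling, still be essentially contained in an $r$-neighbourhood of $X$,'' and plan to contradict this against the porosity of $X$. But this containment is simply false: if $A' \subset X_r$ has doubling constant $K$, Pl\"unnecke--Ruzsa only tells you that $|kA'-\ell A'| \leq K^{k+\ell}|A'|$; it gives no control on \emph{where} these sumsets live, and in general $A'+A' \subset X_r + X_r$ is much larger than $X_r$ (indeed, this expansion is exactly what the theorem is asserting). Without Sanders or a Freiman-type structure theorem, small doubling alone does not force $A'$ to contain or be dense in a long arithmetic progression, and your ``covering/counting'' step has no mechanism to manufacture the approximate translation-invariance needed to collide with porosity. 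Dyatlov--Zahl invoke Sanders precisely to produce such a progression; you have correctly identified this as the quasipolynomial bottleneck, but you have not supplied a working polynomial-cost replacement, only a hope that one exists.

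The paper's actual argument is quite different both from Dyatlov--Zahl and from your sketch: it abandons the inverse-theorem route entirely and instead exploits the \emph{order structure} of $\R$. Working with a $K$-adic mesh, the paper defines a \emph{left edge} to be an active interval $I$ with a long inactive stretch immediately to its left; porosity (from $0<\delta<1$) guarantees many left edges at every scale, and an iterative argument shows that outside an exceptional set of geometrically decaying $\mu^2$-measure, every pair $(x_1,x_2) \in X^2$ lies in a product of ``left near-edges''. For such pairs the competing pairs $(x_3,x_4)$ with $x_3+x_4 \approx x_1+x_2$ and $x_3,x_4$ near $x_1,x_2$ are scarce, because near a left edge most of $X$ lies to the right, so one cannot push $x_3$ right and $x_4$ right while keeping $x_3+x_4$ fixed. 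This yields a small but definite gain over the trivial energy bound at a single fixed scale $r_0$, which is then amplified by a routine induction on scales. No Balog--Szemer\'edi--Gowers, no Pl\"unnecke--Ruzsa, and no inverse theorem appear anywhere; the exponent $-25$ arises from choosing the mesh parameter $K$ and balancing the exceptional-set decay against the left-edge gain.
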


The exponent $25$ can be improved here, but we do not attempt to optimize it in this paper.  It may be possible to improve the bound\footnote{Since the initial release of this manuscript, Brendan Murphy (private communication) has shown us an unpublished manuscript using the induction on scales method of Bourgain \cite{bourgain} and Freiman's theorem to obtain a similar result with $\beta$ comparable to $\frac{1}{\log C \log\log C}$.} even further; the best known counterexample (see \cite[\S 6.8.3]{dyatlov-zahl}) only shows that $\beta$ must decay at least as fast as $\frac{1}{\log C}$ as $C \to \infty$ (holding $\delta$ fixed).

Our argument is elementary and relies heavily on the order structure of the real line $\R$, as well as the ability to work with a mesh of dyadic (or more precisely, $K$-adic for a large $K$) intervals.  Roughly speaking, the idea is to identify a lot of ``left\footnote{One could also work just as easily with a notion of ``right edge'' if desired by switching all the signs.} edges'' of (a suitable discretization of) the $\delta$-regular set $X$: ($K$-adic) intervals $I = [x,x+K^{-2n})$ which intersect the set $X$, but such that there is a large interval $[x-K^{-2n+1},x)$ immediately to the left of $I$ that is completely disjoint from $X$.  Informally, left edges identify locations and scales where the set $X$ visibly fails to behave like an additive subgroup of the real numbers.  As $\delta$ is bounded away from $0$ and $1$, we will be able to exhibit many left edges at many scales, to the point that almost all of the elements of $X$ will be contained in at least one left edge (or slight technical generalization of this concept which we call a left near-edge).  On the other hand, if one considers a sum $x_1+x_2$ arising from a pair $x_1,x_2 \in X$ that are contained in left-edges $[y_1,y_1+K^{-2n})$, $[y_2,y_2+K^{-2n})$ respectively, then there are unusually few other pairs $x_3,x_4 \in X$ with $x_3, x_4$ close (within $K^{-2n+1}/10$ say) of $x_1,x_2$ respectively, such that $x_1+x_2 \approx x_3+x_4$. This is because in the vicinity of the left-edge $[y_1,y_1+K^{-2n})$, most of the candidates for $x_3$ lie far to the right of $x_1$, and similarly most of the candidates of $x_4$ lie far to the right of $x_2$, so it is difficult to keep $x_3+x_4$ close to $x_1+x_2$; see Figure \ref{fig:leftedge}.  Because of this, each pair of left-edges can be used to create a slight diminution of the additive energy; by combining the effect of all the available pairs of left-edges at all scales, we can obtain a preliminary improvement to the trivial bound \eqref{triv} on the additive energy at some fixed small scale $r_0$ (made precise in Proposition \ref{slight-gain-1} below), which can then be iterated by a standard ``induction on scales'' argument to produce Theorem \ref{main}.  Our bounds are superior to that in Theorem \ref{dz-main} because they do not rely on the Bogulybov--Ruzsa theorem of Sanders \cite{sanders-br}, for which the best known bounds are only quasipolynomial in nature.

\begin{figure} [t]
\centering
\includegraphics[width=4in]{./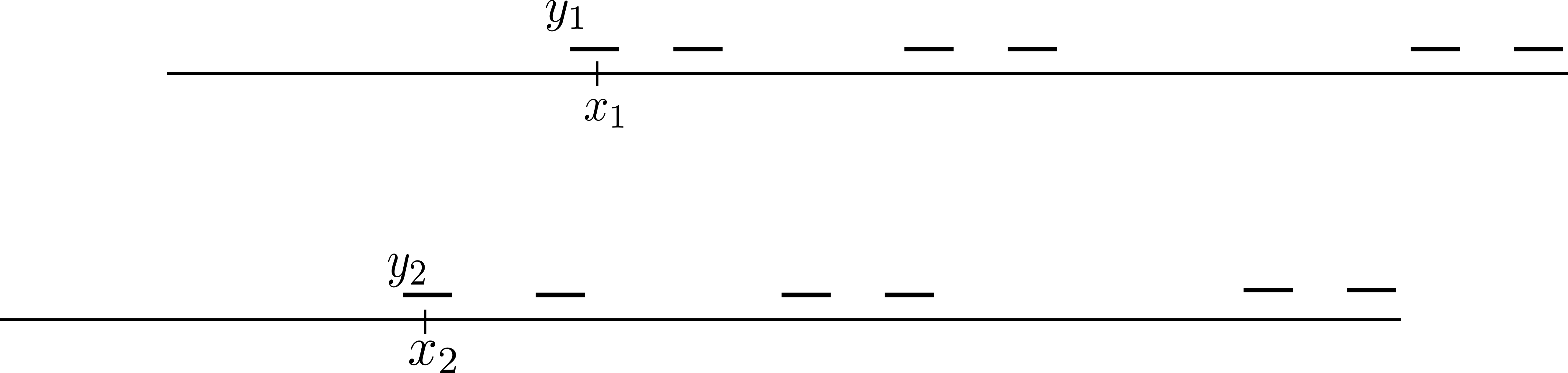}
\caption{The $\delta$-regular set $X$ can be covered by intervals of length $K^{-2n}$ for some given $n$, which are arranged in a fractal-type pattern.  Some of these intervals are depicted here, on two different portions of the real line $\R$.  The intervals $[y_1, y_1+K^{-2n})$ and $[y_2, y_2+K^{-2n})$ are left edges: they have nearby intervals covering $X$ to the right, but not to the left.  Note that if $x_1 \in X \cap [y_1 + y_1+K^{-2n})$ and $x_4 \in X \cap [y_4, y_4+K^{-2n})$ then there are a relatively small number of additional pairs $(x_3,x_4)$ of real numbers $x_3,x_4 \in X$ that are somewhat close to $x_1,x_2$ respectively such that $x_1+x_2 \approx x_3 + x_4$. This leads to a small but non-zero diminuition of the additive energy at the scales between to $K^{-2n}$ and $K^{-2n+1}$, which one can then hope to iterate.}
\label{fig:leftedge}
\end{figure}

Our second result addresses the higher dimensional case.  Here the order-theoretic arguments do not seem to be effective (except possibly in the very high-dimensional regime $d-1 < \delta < d$), and we revert to using more tools from additive combinatorics, and in particular the Bogulybov--Ruzsa theorem. As such, our bounds are of the same general quasipolynomial shape as in Theorem \ref{dz-main}, though it seems likely that further progress on the ``polynomial Freiman--Ruzsa conjecture'' (see e.g., \cite{sanders-bams}) may eventually lead to polynomial bounds along the lines of Theorem \ref{main}.

\begin{theorem}[Improved additive energy bound for higher dimensional regular measures]\label{main-second} Let $d \geq 1$ be an integer, let $0 < \delta < d$ be a non-integer, and let $C>1$ and $0 < \alpha < 1$.  Let $X \subset B^d(0,1)$ be a $\delta$-regular set on scales $[\alpha,1]$, and let $\mu = \mu_X$ be an associated regular measure.  Then we have
$$ \Energy(\mu, r) \lesssim_{C,\delta,d} r^{\delta + \beta}$$
for any $\alpha \leq r < 1$, where $\beta  = \beta_{d,\delta,C} > 0$ takes the quasipolynomial form
$$ \beta = \exp\left( - O_{\delta,d}( 1 + \log^{O_{\delta,d}(1)}(C) ) \right).$$
\end{theorem}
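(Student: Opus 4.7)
The approach is to follow the Dyatlov--Zahl template for Theorem \ref{dz-main} but swap out the order-theoretic porosity analysis, which is unavailable in $\R^d$, for genuinely higher-dimensional additive combinatorics. As in \cite{dyatlov-zahl}, the first step is an induction-on-scales reduction to a single-scale improvement: for a scale $r_0 = r_0(C,\delta,d)$ chosen quasipolynomially small in $C$, one aims to prove
$$ \Energy(\mu_X, r_0) \leq \eta \cdot C^4 r_0^{\delta} $$
with $\eta = \eta(C,\delta,d) < 1$, a multiplicative gain over the trivial bound \eqref{triv}. Standard telescoping through a geometric sequence of intermediate scales then upgrades this one multiplicative gain into the power-type improvement $r^{\delta + \beta}$ for all $r \in [\alpha,1]$, with $\beta$ of the asserted quasipolynomial shape.

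For the one-scale gain I would argue by contradiction, assuming $\Energy(\mu_X, r_0)$ essentially saturates the trivial bound. Extracting a maximal $r_0$-separated subset $X_0 \subset X$ of cardinality $\approx_{C,d} r_0^{-\delta}$, the discretised additive energy $\#\{(x_1,\dots,x_4) \in X_0^4 : |x_1+x_2-x_3-x_4| \leq r_0\}$ becomes comparable to $|X_0|^3$. A continuous version of the Balog--Szemer\'edi--Gowers theorem then produces a large refinement $Y \subset X_0$ of cardinality $\gtrsim |X_0|$ whose $r_0$-sumset has bounded covering-number doubling $K = K(C,\delta,d)$. Sanders' quantitative Bogolyubov--Ruzsa theorem \cite{sanders-br}, applied to a suitably rescaled discrete model of the $r_0$-difference set, then produces a symmetric convex coset progression $P \subset \R^d$ of rank at most $O(\log^{O(1)} K)$ and with $r_0$-cardinality comparable to $|Y|$, essentially contained in an iterated difference set $kY - kY$ for some bounded $k$.

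The last step is to contradict the non-integrality of $\delta$ by exploiting the integer rank of $P$. Being a convex symmetric coset progression, $P$ looks, up to bounded distortion, like an $r_0$-thickened $k$-dimensional parallelepiped for some integer $k \in \{0,1,\dots,d\}$; combined with the approximate containment $Y + P \subset k'Y - k'Y$ and the regularity upper bound (ii) of Definition \ref{reg-set}, this forces the covering numbers of $X$ at scale $r_0$ and at the diameter of $P$ to obey a ratio that pins $\delta$ within $O_{C,d}(1/\log(1/r_0))$ of $k$. Choosing $r_0$ small enough in terms of $\mathrm{dist}(\delta,\Z)$ then contradicts the non-integrality hypothesis, producing the required gain. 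The principal obstacle, and the reason the final bound is only quasipolynomial, is exactly the quasipolynomial rank loss inherent to Sanders' theorem; as the authors anticipate, progress on the polynomial Freiman--Ruzsa conjecture \cite{sanders-bams} would translate directly into a polynomial analogue of Theorem \ref{main} in all dimensions. A secondary technical challenge is the rigidity step itself: since in $\R^d$ the rank $k$ of $P$ may be any integer in $\{1,\dots,d\}$, one must arrange the comparison of scales to isolate $k$ cleanly and play it off against the fixed non-integer $\delta$.
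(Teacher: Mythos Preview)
Your reduction to a single-scale gain via induction on scales, followed by Balog--Szemer\'edi--Gowers and Sanders' Bogolyubov--Ruzsa theorem to produce structured approximate-group behaviour, matches the paper's opening moves. The divergence is at the rigidity step, and there your argument has a genuine gap. A coset progression $P$ of rank $r=O(\log^{O(1)}K)$ in $\R^d$ need not look like an $r_0$-thickened $k$-dimensional parallelepiped for any integer $k$: after John's theorem it is comparable to a box with semi-axes $a_1\ge\cdots\ge a_d$, and nothing forces the $a_i$ to polarise into ``long'' ($\gg r_0$) and ``short'' ($\lesssim r_0$) groups. For instance, in $\R^2$ the box $[0,1]\times[0,r_0^{2-\delta}]$ has $r_0$-covering number $\sim r_0^{-\delta}$ for any $1<\delta<2$, so a rank-$2$ progression can have genuinely non-integer ``dimension'' at scale $r_0$. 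Consequently the comparison of covering numbers of $X$ at $r_0$ and at $\mathrm{diam}(P)$ does not isolate an integer $k$, and no contradiction with $\delta\notin\Z$ follows. The containment $P\subset k'Y-k'Y$ only places $P$ inside a neighbourhood of iterated sumsets of $X$, not of $X$ itself, so the upper $\delta$-regularity of $X$ does not directly constrain the shape of $P$ either.

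The paper sidesteps this entirely by inducting on the ambient dimension $d$ rather than trying to read off an integer from the full Freiman structure. From the approximate group one extracts only a rank-$1$ arithmetic progression $P=\{mv:|m|\le M\}$ (via Lemma \ref{gap}), then uses a Gowers-norm ``splitting'' inequality (Lemma \ref{relate}(iii)) to localise to cylinders $B(0,|v|)+P$. If $\delta<1$ one gets an immediate contradiction because $X$ is too sparse to occupy a positive fraction of such a cylinder. If $\delta>1$ one projects orthogonally to $v^\perp\cong\R^{d-1}$, shows the projected set is upper $(\delta-1)$-regular on a range of scales (Lemma \ref{delta}), and applies the inductive hypothesis in dimension $d-1$ with exponent $\delta-1$, which remains non-integer. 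This ``quotient by one line and induct'' manoeuvre is the missing idea; it replaces your attempted one-shot rank comparison with an iterative dimension reduction, and is what actually exploits $\delta\notin\Z$.
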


We prove this theorem in Section \ref{higher-sec}.  As with Theorem \ref{main}, it suffices to gain a small amount over the trivial bound at some fixed scale $r_0$ as one can then use induction on scales to then obtain the power saving for arbitrary scales $r$.  Our proofs shares some features in common with that in \cite{dyatlov-zahl}, in that one assumes that the energy is unusually large and uses this to locate a non-trivial arithmetic progression along which $\mu$ obeys an approximate symmetry in a certain range of scales.  If the dimension $\delta$ was less than $1$ then one could use the regularity to obtain a contradiction (basically because a $\delta$-dimensional measure cannot support an entire line segment if $\delta<1$).  The main novelty in our argument is the treatment of the higher dimensional case $\delta>1$.  Here the strategy is to ``quotient out'' by the arithmetic progression and obtain (at least on some range of scales) what is (morally at least) a $\delta-1$-regular measure supported on a $d-1$-dimensional hyperplane of $\R^d$.  This can then be treated by a suitable induction hypothesis (note how the crucial hypothesis that $\delta$ is not an integer is propagated by this process).  In order to make the notion of ``quotienting out'' by an arithmetic progression (which is merely an approximate subgroup of $\R^d$, as opposed to a genuine subgroup) rigorous, we will rely heavily on the machinery of additive combinatorics, most notably the theory of the Gowers uniformity norm $U^2$ and of approximate groups, and how one can split up the problem of estimating a global $U^2$ Gowers norm into the ``fine scale'' problem of estimating various local Gowers norms restricted to a ``coset'' $4H+y$ of some approximate group $H$, and the ``coarse scale'' problem of controlling the $U^2$ norm of the output of these local Gowers norms (viewed as a function of $y$); see Lemma \ref{relate}(iii) for a precise statement.

\begin{remark} It seems likely that the techniques in this paper can be combined with those in \cite{rossi-shmerkin} to obtain new $L^q$ improving bounds for convolutions of regular measures, but we will not pursue this direction here.
\end{remark}

\begin{remark}  Since the initial release of this manuscript, Pablo Shmerkin (private communication) has informed us that Hochman's inverse entropy theorem \cite[Theorem 2.10]{hochman} implies a qualitative entropy version of Theorem \ref{main-second}, which roughly speaking asserts that under the hypotheses of that theorem (holding at all scales between $0$ and $1$), $\mu * \mu$ should have strictly larger entropy than $\mu$ itself.  Very roughly speaking, this is because Hochman's theorem asserts that if this entropy increase does not hold, then at many scales $\mu$ must locally resemble an approximately uniform measure on affine subspaces of some integer dimension between $0$ and $d$, which is inconsistent with regularity if $\delta$ is not an integer.  In fact, it appears likely that these methods could give an alternate proof of Theorem \ref{main-second} itself, but without a quantitative value of $\beta$.
\end{remark}

One can use these additive energy bounds to obtain expansion estimates for both linear and nonlinear maps.  Here is a sample such result.  For any measurable $E \subset \R^d$ we let $|E|$ denote its Lebesgue measure.

\begin{theorem}[Nonlinear expansion]\label{nonlinear}  Let $d \geq 1$ be an integer, let $0 < \delta < d$ be a non-integer, and let $C>1$ and $0 < r < 1$.  Let $F: \R^d \times \R^d \to \R^d$ be a $C^2$ (twice continuously differentiable) map such that for any $x,y \in B(0,2)$, the derivative maps $D_x F(x,y), D_y F(x,y): \R^d \to \R^d$ are invertible.  Let $X,Y \subset B(0,1)$ be $\delta$-regular sets on scales $[r,1]$, and let $X_r \coloneqq X+B(0,r)$, $Y_r \coloneqq Y + B(0,r)$ be the $r$-neighborhoods of $X,Y$ respectively.  Then the set
$$ F(X_r, Y_r) \coloneqq \{ F(x,y): x \in X_r, y \in Y_r \}$$
has measure
$$ | F( X_r, Y_r )| \gtrsim_{C,\delta,d,F} r^{d-\delta-\beta}$$
where $\beta$ takes the form
$$ \beta = \exp\left( - O_{\delta,d,F}( 1 + \log^{O_{\delta,d}(1)}(C) ) \right).$$
When $d=1$, we can take
$$ \beta = c_F \min(\delta,1-\delta) C^{-25}$$
for some $c_F>0$ depending on $F$.
\end{theorem}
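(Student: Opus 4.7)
My plan combines a Cauchy--Schwarz/duality argument (converting additive energy estimates into measure lower bounds) with a localization that reduces the ``nonlinear'' additive energy
\[
E_F(\mu_X,\mu_Y,r) \coloneqq (\mu_X\otimes\mu_Y)^2(\{(x_1,y_1,x_2,y_2) : |F(x_1,y_1)-F(x_2,y_2)|\le r\})
\]
to the ordinary additive energy controlled by Theorems~\ref{main} and~\ref{main-second}. I first show $|F(X_r,Y_r)|\gtrsim_F r^d/E_F(\mu_X,\mu_Y,O_F(r))$ by setting $\nu = F_*(\mu_X\otimes\mu_Y)$, smoothing by an $L^\infty$-normalized bump $\rho_r$ at scale $r$, and applying Cauchy--Schwarz in the form $1 \le |\mathrm{supp}(\nu*\rho_r)|^{1/2}\|\nu*\rho_r\|_2$. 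The support of $\nu*\rho_r$ is contained in $F(X,Y)+B(0,r)$; by the inverse function theorem (using that $D_xF,D_yF$ are invertible on the compact set $\overline{B(0,2)}$) this set in turn lies in an $O_F(1)$-dilation of $F(X_r,Y_r)$. Expanding $\|\nu*\rho_r\|_2^2$ yields the desired $\lesssim r^{-d}E_F(\mu_X,\mu_Y,O(r))$.

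Next I reduce the energy bound $E_F(\mu_X,\mu_Y,r)\lesssim r^{\delta+\beta}$ to the linear case. When $F(x,y)=x+y$, $E_F$ is the ``mixed'' additive energy, which Plancherel dominates by $\int|\hat{\mathbf{1}}_{B(0,r)}|^2|\hat\mu_X|^2|\hat\mu_Y|^2$, and hence by $\sqrt{\Energy(\mu_X,r)\Energy(\mu_Y,r)}$ via Cauchy--Schwarz on the Fourier side; the estimate then follows from Theorem~\ref{main-second} (or Theorem~\ref{main} when $d=1$). For general $F$ I partition $B(0,2)^2$ by boxes $R$ of side $\eta = r^{1/2}$. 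Second-order Taylor expansion on $R$ gives
\[
F(x_1,y_1)-F(x_2,y_2) = A_R(x_1-x_2)+B_R(y_1-y_2)+O(\eta^2),
\]
with $A_R = D_xF(c_R), B_R = D_yF(c_R)$ uniformly invertible (depending only on $F$); the $O(\eta^2)=O(r)$ error is absorbed into the $r$-threshold. Pushforward by $A_R,B_R$ preserves $\delta$-regularity up to a factor $O_F(C)$, and the linear case applied at the rescaled scales $[r/\eta,1]$ bounds the per-box contribution by $\eta^{3\delta-\beta}r^{\delta+\beta}$; summing over the $\lesssim \eta^{-2\delta}$ non-empty boxes gives $E_F\lesssim \eta^{\delta-\beta}r^{\delta+\beta}\le r^{\delta+\beta}$.

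The main technical obstacle is handling pairs $((x_1,y_1),(x_2,y_2))$ that are \emph{not} contained in a common $O(\sqrt r)$-box but nonetheless satisfy $|F(x_1,y_1)-F(x_2,y_2)|\le r$ because they lie on distinct connected sheets of the fiber $F^{-1}(F(x_1,y_1))$. Compactness of $\overline{B(0,2)}^2$ together with the $C^2$ invertible-Jacobian hypothesis ensures these sheets are separated by some distance $c_F>0$ uniformly in the fiber label; once $r\le c_F^2$, every box of side $\sqrt r$ meets at most one sheet, so the preceding cube sum already captures all of $E_F$, while the remaining range $r\in[c_F^2,1]$ is trivial since $E_F\lesssim_F 1\lesssim_F r^{\delta+\beta}$. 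Combining the two reductions gives $|F(X_r,Y_r)|\gtrsim r^{d-\delta-\beta}$, with $\beta$ inheriting the quasipolynomial shape from Theorem~\ref{main-second} in general and the polynomial form $c_F\min(\delta,1-\delta)C^{-25}$ from Theorem~\ref{main} when $d=1$.
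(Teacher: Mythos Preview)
Your Cauchy--Schwarz reduction to the nonlinear energy $E_F$ and your treatment of the linear case $F(x,y)=x+y$ via Plancherel and Cauchy--Schwarz on the Fourier side are both fine (modulo a harmless slip: your bump $\rho_r$ should be $L^1$-normalized, not $L^\infty$-normalized, for the inequality ``$1\le|\mathrm{supp}(\nu*\rho_r)|^{1/2}\|\nu*\rho_r\|_2$'' to read correctly). The gap is in the passage from the diagonal box sum to all of $E_F$. Your claim that pairs $((x_1,y_1),(x_2,y_2))$ not sharing an $O(\sqrt r)$-box must lie on \emph{distinct connected sheets} of $F^{-1}(F(x_1,y_1))$, separated by some $c_F>0$, is false: the hypothesis only says $D_xF$ and $D_yF$ are separately invertible, so the fiber $F^{-1}(z)\subset\R^{2d}$ is a $d$-dimensional submanifold, not a discrete set. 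Already for $F(x,y)=x+y$ the fiber is a single connected $d$-plane on which two points can be arbitrarily far apart while having identical $F$-values. Thus the off-diagonal contribution to $E_F$ is not controlled by sheet separation, and your box decomposition as written captures only the diagonal part.

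The paper sidesteps this by working with measure lower bounds rather than a global energy upper bound. It fixes a single $y_0\in Y$ and bounds $|F(X_r,Y_r\cap B(y_0,\eps r^{1/2}))|$ from below; because $y$ is essentially frozen, the map $x\mapsto F(x,y_0)$ is a local diffeomorphism by the invertibility of $D_xF$, so the images $F(X_r\cap B(x_i,\eps r^{1/2}),Y_r\cap B(y_0,\eps r^{1/2}))$ have bounded overlap as the $x_i$ range over an $\eps r^{1/2}$-net in $X$. This converts the problem into a sum of \emph{local} linear sumset estimates $|A_i+B|$, each handled by Cauchy--Schwarz and the energy bound for $\mu_X$ alone, with no cross terms between distant boxes. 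If you want to rescue your energy-based route you would need either an analogous localization (fixing one variable before decomposing the other), or an independent argument bounding the off-diagonal part of $E_F$; the sheet-separation claim does not supply one.
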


In particular, we have
\begin{equation}\label{x-sum}
|X_r + X_r|, |X_r - X_r | \gtrsim_{C,\delta,d} r^{d-\delta-\beta}
\end{equation}
and in one dimension we similarly have
\begin{equation}\label{x-product}
|X_r \cdot X_r| \gtrsim_{C,\delta} r^{1-\delta-\beta}
\end{equation}
if $X$ lies in (say) $[1/2,1]$ (this can be deduced either by taking logarithms, or by applying the above theorem to the multiplication map $x,y \mapsto xy$ after applying some mild changes of variable to avoid the degeneracies at $x,y=0$).  Here we use the usual sumset notations
\begin{align*}
A+B &\coloneqq \{ a+b: a \in A, b \in B \} \\
A-B &\coloneqq \{ a-b: a \in A, b \in B \} \\
A+b &\coloneqq \{ a+b: a \in A \} \\
-A &\coloneqq \{ -a: a \in A \} \\
A\cdot B &\coloneqq \{ ab: a \in A, b \in B \}.
\end{align*}
For comparison, it is easy to see from Definition \ref{reg-set} that
$$ |X_r| \sim_{C,\delta,d} r^{d-\delta}$$
so the bounds in \eqref{x-sum}, \eqref{x-product} represent a power gain over the trivial bound $|A+B| \geq |A|, |B|$ (and $|A \cdot B| \gtrsim |A|, |B|$ when $A,B \subset [1,2]$).

We establish Theorem \ref{nonlinear} in Section \ref{nonlinear-sec}.  This result can be compared with the celebrated discretized sum-product theorem of Bourgain \cite{bourgain}, which in our language would give a bound of the form
\begin{equation}\label{sum-product}
\max( |X_r+X_r|, |X_r \cdot X_r| ) \gtrsim_{C,\delta} r^{\delta-\beta}
\end{equation}
for some $\beta = \beta_\delta > 0$ and $X \subset [1,2]$, but where the lower bound in Definition \ref{reg-set}(iii) is only assumed to hold at scale $r=1$; see the recent preprint \cite{gkz} for an explicit value of $\beta$ as a function of $\delta$.  In this more general set up there are standard constructions (in which $X$ resembles either a long arithmetic progression or a long geometric progression at various scales) that show that one can no longer expect the separate bounds \eqref{x-sum}, \eqref{x-product}, and and can only hope for \eqref{sum-product}.  This is consistent with results such as \eqref{x-sum}, \eqref{x-product} because regular sets cannot support long arithmetic progressions or long geometric progressions.  Under the same general hypotheses on $X$ considered in the above-mentioned result \eqref{sum-product} of Bourgain, an expansion bound of the form
$$ |F( X_r, X_r )| \gtrsim_{C,\delta,F} r^{\delta-\beta}$$
was also recently established in \cite{rz} for polynomial maps $F$ under the (necessary) Elekes-R\'onyai condition that $F(x,y)$ is not of the form $h(a(x)+b(y))$ or $h(a(x)b(y))$ for some polynomials $h,a,b$.

\subsection{Application to the fractal uncertainty principle}

Let $d \geq 1$ be a dimension, and let $0 < h \leq 1$ be a small parameter.  We then define the semiclassical Fourier transform 
$${\mathcal F}_h f \colon L^2(\R^d) \to L^2(\R^d)$$ 
by the formula
$$ {\mathcal F}_h f(\xi) \coloneqq (2\pi h)^{-d/2} \int_{\R^d} e^{-i x \cdot \xi/h} f(x)\ dx$$
for Schwartz functions $f$, extended to $L^2(\R^d)$ by continuity in the usual fashion.  The \emph{fractal uncertainty principle} concerns operator norm estimates of the form
$$ \| 1_{X_h} {\mathcal F}_h 1_{Y_h} \|_{L^2(\R^d) \to L^2(\R^d)} \lesssim h^\sigma$$
for various sets $X_h,Y_h$ (which are permitted to depend on $h$) and various exponents $\sigma$.  Here and in the sequel we use $1_E$ to denote the indicator function of a set $E$.  A model case is when $X,Y$ are $\delta$-regular subsets of $B(0,1)$ at scales $[h,1]$ for some constant $C$, and $X_h \coloneqq X + B(0,h)$, $Y_h \coloneqq Y + B(0,h)$ are the $h$-neighborhoods of $X,Y$ respectively.  A standard application of Plancherel's theorem and trivial bound
$$ \| {\mathcal F}_h f\|_{L^\infty(\R^d)} \lesssim_d h^{-d/2} \|f\|_{L^1(\R^d)}$$
then gives the ``trivial bound''
\begin{equation}\label{trivial-unc}
 \| 1_{X_h} {\mathcal F}_h 1_{Y_h} \|_{L^2(\R^d) \to L^2(\R^d)} \lesssim_{C,d} h^{\max\left(\frac{d}{2}-\delta,0\right)}
 \end{equation}
 (see e.g., \cite{dyatlov-survey} for the argument in the one-dimensional case $d=1$, which extends without difficulty to higher dimensions).  In one dimension $d=1$ with $0 < \delta < 1$, the fractal uncertainty principle \cite{dyatlov-zahl}, \cite{dyatlov-jin}, \cite{bourgain-dyatlov} asserts that one can improve this bound to
 \begin{equation}\label{frac-unc}
 \| 1_{X_h} {\mathcal F}_h 1_{Y_h} \|_{L^2(\R) \to L^2(\R)} \lesssim_{C,\delta} h^{\max\left(\frac{1}{2}-\delta,0\right) + \beta}
 \end{equation}
for some $\beta>0$ depending only on $C,\delta$.  Indeed, the following specific values for $\beta$ are known:
\begin{itemize}
\item[(i)]  \cite[Theorems 4, 6]{dyatlov-zahl} One can take
\begin{equation}\label{beta-frac}
 \beta = \frac{3}{8} \left(\frac{1}{2}-\delta\right) - \max\left(\frac{1}{2}-\delta,0\right) + \frac{1}{16} \delta \exp( - K (1-\delta)^{14} (1+\log^{14} C) )
 \end{equation}
for some absolute constant $K>0$ (this only gives a positive value of $\beta$ for $\delta$ sufficiently close to $1/2$).
\item[(ii)] \cite[Theorem 1.2]{jin-zhang} For $\delta \geq 1/2$, one can take
 \begin{equation}\label{sss}
 \beta = \exp\left[ -\exp( K( C \delta^{-1} (1-\delta)^{-1})^{K(1-\delta)^{-2}} )\right]
 \end{equation}
for an absolute constant $K>0$.
\item[(iii)] \cite[Theorem 1, Remarks 1]{dyatlov-jin}  For $0 < \delta \leq 1/2$, one can take
$$ \beta = (5C)^{-\frac{160}{\delta(1-\delta)}}.$$
\end{itemize}
Fractal uncertainty principles can be applied to quantum chaos to obtain lower bounds on the mass of eigenfunctions and to produce spectral gaps on various negatively curved manifolds see \cite{dyatlov-survey} for a survey of this connection.  Fractal uncertainty principles have also been established for a wider class of sets than regular sets, and particular to \emph{porous sets}, but we will not discuss this generalization further here.  We remark that while the exponent \eqref{beta-frac} was obtained via the additive energy method, other fractal uncertainty principle estimates, such as the one giving \eqref{sss}, relied on somewhat different techniques, such as the Beurling--Malliavin multiplier theorem.

By combining \cite[Theorem 4]{dyatlov-zahl} (or \cite[Proposition 5.4]{dyatlov-survey}) with Theorem \ref{main}, we can immediately improve \eqref{beta-frac} to
$$
\beta = \frac{3}{8} \left(\frac{1}{2}-\delta\right) - \max\left(\frac{1}{2}-\delta,0\right) + c \min(\delta,1-\delta) C^{-25}
$$
for an absolute constant $c>0$; this is still only an improvement over the trivial bound for $\delta$ very close to $1/2$, but the dependence on the regularity constant $C$ has improved.

In higher dimension $d > 1$, a similar combination of \cite[Theorem 4]{dyatlov-zahl} and  Theorem \ref{main-second} gives the estimate 
$$
\| 1_{X_h} {\mathcal F}_h 1_{Y_h} \|_{L^2(\R^d) \to L^2(\R^d)} \lesssim_{C,d,\delta} h^{\max(\frac{d}{2}-\delta,0) + \beta}$$
with
\begin{equation}\label{bad}
\beta = \frac{3}{8} \left(\frac{d}{2}-\delta\right) - \max\left(\frac{d}{2}-\delta,0\right) + \exp\left( - O_{\delta,d}( 1 + \log^{O_{\delta,d}(1)}(C) ) \right)
\end{equation}
when $\delta$ is a non-integer.  In the case of even dimension this does not\footnote{In particular, our results do not directly make progress on \cite[Conjecture 6.2]{dyatlov-survey}, which concerns the two-dimensional situation $d=2$.} give a positive value of $\beta$ for any value of $\delta$, which is to be expected since the no improvement to the trivial exponent $\max(\frac{d}{2}-\delta,0)$ is possible at the critical value $\delta=d/2$ in this case (see \cite[Example 6.1]{dyatlov-survey} for a counterexample when $d=2$, and a similar construction also works in other even dimensions).  However when $d$ is odd, $d/2$ is a non-integer, and for $\delta$ sufficiently close to $d/2$ the implied constants in \eqref{bad} can then be verified to be uniform in $\delta$ (for $d$ fixed), leading to a fractal uncertainty principle of the form
$$ \beta = \frac{3}{8} \left(\frac{d}{2}-\delta\right) - \max\left(\frac{d}{2}-\delta,0\right) + \exp\left( - O_d( 1 + \log^{O_d(1)} C) \right)$$
which is non-negative for $\delta$ sufficiently close to $d/2$.  To the authors knowledge, this is the first higher dimensional fractal uncertainty principle that holds for \emph{arbitrary} regular sets.  In the case when one of the sets $X,Y$ was the Cartesian product of $d$ one-dimensional regular (or porous) sets, a higher dimensional fractal uncertainty principle (with exponents similar in shape to \eqref{sss}) was recently obtained  in \cite{han-schlag}.  Also, as observed in \cite[\S VI.A]{dyatlov-survey}, a higher dimensional fractal uncertainty principle can sometimes be deduced from iterating the one-dimensional principle if certain projections and fibers of $X,Y$ are assumed to be porous.

If one allows the sets $X,Y$ to be regular with different dimensional parameters $\delta, \delta'$ then the additive energy bounds in Theorems \ref{main}, \ref{main-second} can give further fractal uncertainty principles.  Indeed we have the following statement:

\begin{theorem}[Consequence of additive energy bounds]\label{add-eng}  
 Let $d \geq 1$ be an integer, let $0 < \delta < d$ be a non-integer, and let $C>1$ and $0 < h < 1$.  Let $Y \subset B^d(0,1)$ be a $\delta$-regular set on scales $[h,1]$ with constant $C$.  Then
\begin{equation}\label{fey}
\| {\mathcal F}_h 1_{Y_h} \|_{L^2(\R^d) \to L^8(\R^d)} \lesssim_{C,d,\delta} h^{-3\delta/8+\beta}
\end{equation}
where
$$ \beta = \exp\left( - O_{\delta,d}( 1 + \log^{O_{\delta,d}(1)}(C) ) \right);$$
in the $d=1$ case one can instead take
$$ \beta = c \min(\delta,1-\delta) C^{-25}$$
for some absolute constant $c>0$.  In particular, by H\"older's inequality, if $X \subset B^d(0,1)$ is $\delta'$-regular on scales $[h,1]$ with constant $C'$ for some $0 \leq \delta' \leq d$ and $C' > 1$, then
\begin{equation}\label{fey-2} \| 1_{X_h} {\mathcal F}_h 1_{Y_h} \|_{L^2(\R) \to L^2(\R)} \lesssim_{C,C',d,\delta,\delta'} h^{\frac{3}{8} (d-\delta-\delta')+\beta}
\end{equation}
\end{theorem}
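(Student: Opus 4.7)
The plan is to reduce the $L^2\to L^8$ bound on $\mathcal{F}_h 1_{Y_h}$ to a fourth moment estimate by a standard $TT^*$ argument, identify that fourth moment with a discretized additive energy of $Y_h$, and then convert it to the regular-measure energy $\Energy(\mu_Y,h)$ so that Theorem \ref{main-second} (or Theorem \ref{main} when $d=1$) becomes applicable; the bound \eqref{fey-2} then follows from \eqref{fey} by H\"older, as the statement already indicates.

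I would set $T := \mathcal{F}_h 1_{Y_h}$, use $T^* = 1_{Y_h}\mathcal{F}_h^{-1}$ together with $1_{Y_h}^2 = 1_{Y_h}$, and apply the semiclassical product-to-convolution rule $\mathcal{F}_h(u\cdot \mathcal{F}_h^{-1}g) = (2\pi h)^{-d/2}(\mathcal{F}_h u)*g$ to see that $TT^*$ is convolution against the kernel $(2\pi h)^{-d/2}\mathcal{F}_h 1_{Y_h}$. The identity $\|T\|_{L^2\to L^8}^2 = \|TT^*\|_{L^{8/7}\to L^8}$ combined with Young's inequality (whose exponents align so that $L^4 * L^{8/7}\subset L^8$) then yields
$$\|\mathcal{F}_h 1_{Y_h}\|_{L^2\to L^8}^2 \lesssim h^{-d/2}\,\|\mathcal{F}_h 1_{Y_h}\|_{L^4}.$$

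Next I would compute the $L^4$ norm via Plancherel: expanding $|\mathcal{F}_h 1_{Y_h}|^2 = (2\pi h)^{-d/2}\mathcal{F}_h(1_{Y_h}*\widetilde{1_{Y_h}})$ and applying Plancherel give
$$\|\mathcal{F}_h 1_{Y_h}\|_{L^4}^4 = (2\pi h)^{-d}\int|Y_h\cap(Y_h-z)|^2\,dz = (2\pi h)^{-d}\int_{Y_h^3}1_{Y_h}(a+b-c)\,da\,db\,dc$$
after a routine change of variables. Smoothing the constraint $d = a+b-c$ by $O(h)$-thickening (using only that $|Y_h\cap B(x_0,O(h))|\gtrsim h^d$ for every $x_0\in Y_h$, a direct consequence of Definition \ref{reg-set}) upper-bounds this set energy by $h^{-d}\Energy(1_{Y_h}\,dx,O(h))$. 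A standard discretization via a maximal $h$-separated subset of $Y$ (of cardinality $\sim h^{-\delta}$ by regularity) then compares the Lebesgue energy of $Y_h$ to the $\mu_Y$ energy using the $h$-scale relation $1_{Y_h}\,dx \sim h^{d-\delta}\mu_Y$, giving
$$\Energy(1_{Y_h}\,dx,O(h)) \lesssim h^{4(d-\delta)}\,\Energy(\mu_Y,O(h)).$$
Chaining these displays produces $\|\mathcal{F}_h 1_{Y_h}\|_{L^4}^4 \lesssim h^{2d-4\delta}\,\Energy(\mu_Y,O(h))$.

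Finally, invoking Theorem \ref{main-second} (or Theorem \ref{main} when $d=1$) gives $\Energy(\mu_Y,O(h))\lesssim h^{\delta+\beta'}$ with $\beta'$ of the stated form, and substitution yields $\|\mathcal{F}_h 1_{Y_h}\|_{L^4}\lesssim h^{(2d-3\delta+\beta')/4}$, hence via the $TT^*$ estimate
$$\|\mathcal{F}_h 1_{Y_h}\|_{L^2\to L^8}\lesssim h^{-3\delta/8+\beta'/8},$$
which is \eqref{fey} after relabelling $\beta := \beta'/8$. For \eqref{fey-2}, H\"older with the split $\tfrac12 = \tfrac38 + \tfrac18$ gives $\|1_{X_h}g\|_2\leq|X_h|^{3/8}\|g\|_8\lesssim h^{3(d-\delta')/8}\|g\|_8$, since $\delta'$-regularity forces $|X_h|\sim h^{d-\delta'}$; chaining with \eqref{fey} produces the claimed exponent $\tfrac38(d-\delta-\delta')+\beta$. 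The only non-routine point is the smoothing-and-discretization step that converts the set energy of $Y_h$ into $\Energy(\mu_Y,O(h))$; this is careful bookkeeping using the upper and lower regularity bounds in Definition \ref{reg-set}, and presents no conceptual obstacle beyond Theorem \ref{main-second} itself.
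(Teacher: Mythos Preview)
Your proof is correct and follows essentially the same path as the paper: both reduce \eqref{fey} to controlling $\|\mathcal{F}_h 1_{Y_h}\|_{L^4}$, identify this (up to scaling) with $\|1_{Y_h}\|_{U^2}$ and hence with $\Energy(\mu_Y,O(h))$ via the pointwise comparison $1_{Y_h}\lesssim h^{d-\delta}\mu_Y*\nu_{B(0,O(h))}$, and then invoke Theorems \ref{main} and \ref{main-second}. The only cosmetic difference is the reduction step---you use $TT^*$ combined with Young's inequality, whereas the paper interpolates via Riesz--Thorin between the trivial $L^1\to L^\infty$ bound and an $L^\infty\to L^4$ bound; both routes arrive at the same $L^4$ quantity with the same exponents.
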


We prove this theorem in Section \ref{fract-sec}.  One can compare \eqref{fey-2} with the trivial bound, which in this case is
\begin{equation}\label{xhyh}
\| 1_{X_h} {\mathcal F}_h 1_{Y_h} \|_{L^2(\R) \to L^2(\R)} \lesssim_{C,C',d,\delta,\delta'} h^{\max\left(\frac{d-\delta-\delta'}{2},0\right)}.
\end{equation}
Thus one has new fractal uncertainty principles for $\delta'$-regular $X$ and $\delta$-regular $Y$ when $\delta$ is non-integer and $\delta+\delta'$ is sufficiently close to $d$; by duality one also has similar results when it is $\delta'$ that is assumed to be non-integer in place of $\delta$. As before, a modification of \cite[Example 6.1]{dyatlov-survey} shows that no improvement of the trivial bound \eqref{xhyh} can be obtained when $\delta,\delta'$ are both integers.

\subsection{Acknowledgments}

The first author is supported by a National Science Foundation Postdoctoral Fellowship, NSF grant 1703715. The second author is supported by NSF grant DMS-1764034 and by a Simons Investigator Award. The first author thanks Semyon Dyatlov for helpful conversations, and we also thank Josh Zahl, the anonymous referees, and some anonymous comments on the second author's blog for corrections.

\subsection{Notation}\label{notation-sec}

We use the asymptotic notation $X \lesssim Y$, $Y \gtrsim X$, or $X = O(Y)$ to denote the bound $|X| \leq CY$ for an absolute constant $C > 0$, and use $X \sim Y$ synonymously with $X \lesssim Y \lesssim X$.  If we need the constant $C$ to depend on additional parameters (e.g., the dimension $d$), we indicate this by subscripts, thus for instance $X \lesssim_d Y$ denotes the bound $|X| \leq C_d Y$ for some constant $C_d>0$ that depends on $d$.

If $E$ is a finite set, we use $\# E$ to denote its cardinality.

\section{The one-dimensional case: obtaining a slight gain}

In this section we begin the proof of Theorem \ref{main}.  Namely, we establish the following seemingly weaker variant in which one obtains only a slight improvement over the trivial bound \eqref{triv}, but at a fixed scale $r_0>0$.  In the next section we will use standard induction on scales argument to iterate this slight improvement to a power gain in the scale parameter.

\begin{proposition}[Slight gain over the trivial bound in one dimension]\label{slight-gain-1}  Let $0 < \delta < 1$, $C>1$ and $0 < \eps \leq 1/2$.  Let $r_0>0$ be the quantity
\begin{equation}\label{r0-def} r_0 \coloneqq \exp\left( - C_2 \frac{C^{16} \log^2(C/\eps)}{\eps^2 \min(\delta,1-\delta)}\right)
\end{equation}
for a sufficiently large absolute constant $C_2$.
Let $X \subset \R$ be a $\delta$-regular set on scales $[r_0,1]$ with constant $C$, and let $\mu = \mu_X$ be an associated regular measure.  Then we have
$$ \Energy(\mu|_{[-1,1]}, r_0) \leq \eps r_0^\delta.$$
\end{proposition}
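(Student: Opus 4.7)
The plan is to implement the ``left edge'' heuristic from the introduction. Fix a parameter $K \geq 2$, chosen as a power of $C/\eps$ to be optimized later, and discretize $X$ via the mesh of $K$-adic intervals at scales $K^{-2n}$ for $n = 1, \dots, N$, where $N \sim \log(1/r_0)/\log K$ so that $K^{-2N} \sim r_0$. An interval $I = [y, y+K^{-2n})$ will be a \emph{left edge} if $I \cap X \neq \emptyset$ but $[y - K^{-2n+1}, y) \cap X = \emptyset$; a \emph{left near-edge} is a mild relaxation allowing a small $\mu$-mass just to the left. The target is to show that the pairs $(x_1, x_2) \in X \times X$ that can be assigned left near-edges $I_1, I_2$ at many common scales carry almost all of $\mu \times \mu$, and that each such assignment produces a quantifiable multiplicative gain in the local contribution to $\Energy(\mu, r_0)$.

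First, I would establish a density lower bound on left near-edges at each scale. Inside a parent $K$-adic interval of length $K^{-2n+2}$ meeting $X$, the regularity bounds force the number of non-empty children of length $K^{-2n}$ to be at most $\lesssim C^2 K^{2\delta}$ out of $K^2$. A one-dimensional pigeonhole among those non-empty children then produces many ``leftmost-after-a-gap'' children carrying a fraction $\gtrsim \min(\delta, 1-\delta)$ of the local $\mu$-mass (the $1-\delta$ factor reflects the porosity forcing gaps, while the $\delta$ factor prevents $X$ from being so sparse that non-empty children are too few to populate any edges). Summing over scales via a Fubini-type counting should then yield that, for a $1-o(1)$ fraction (in $\mu \times \mu$) of pairs $(x_1, x_2)$, the set of scales $n \leq N$ at which both $x_i$ lie in left near-edges has size $\Omega(N \min(\delta, 1-\delta))$.

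The heart of the argument is the per-scale energy loss. Fix such a pair $(x_1, x_2)$ with $x_i \in I_i$ a left edge of length $K^{-2n}$, and set $S \coloneqq x_1 + x_2$. I would bound the contribution to $\Energy(\mu|_{[-1,1]}, r_0)$ from pairs $(x_3, x_4) \in X^2$ with $|x_1 - x_3|, |x_2 - x_4| \lesssim K^{-2n+1}$ and $|x_3 + x_4 - S| \leq r_0$. Because $X$ avoids $[y_i - K^{-2n+1}, y_i)$, both $x_3, x_4$ lie essentially to the right of $x_1, x_2$ (up to slop $K^{-2n}$), so $x_3 + x_4 \geq S - O(K^{-2n})$; comparing the $\delta$-regularity upper bound on the set of admissible $x_4$ for a fixed $x_3$ against the $\delta$-regularity lower bound on all candidates for $x_3$ in a window of length $K^{-2n+1}$ extracts a multiplicative gain factor of the form $1 - c C^{-O(1)}$ over the trivial scale-$n$ energy count.

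Finally, I would combine these per-scale gains multiplicatively over the $\gtrsim N \min(\delta, 1-\delta)$ good scales of a good pair, telescoping along the $K$-adic tree from the finest scale $r_0$ outward. The resulting total gain is $(1 - c C^{-O(1)})^{\Omega(N \min(\delta, 1-\delta))}$, which is $\leq \eps$ provided $N \gtrsim C^{O(1)} \log(1/\eps)/(\eps \min(\delta, 1-\delta))$, and matching $N \log K$ against $\log(1/r_0)$ recovers \eqref{r0-def}. The main obstacle I expect is this combination step: the ``good-scale'' events for $(x_1, x_2)$ are not independent across scales, so the per-scale gain factor must be inserted into the integral defining $\Energy(\mu, r_0)$ in a way that respects the nested $K$-adic decomposition rather than a naïve product over scales. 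Making the definition of ``near-edge'' robust enough to absorb the slop between consecutive scales, while still yielding a clean multiplicative gain at each one, is where most of the bookkeeping will live.
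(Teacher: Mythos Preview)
Your left-edge heuristic is exactly the paper's, and your first two steps (abundance of left near-edges, and a gain from a pair of left edges at a fixed scale) match the paper's building blocks. The gap is the one you already identify: the multiplicative telescoping of per-scale gains over $\Omega(N\min(\delta,1-\delta))$ scales. There is no evident inequality expressing $\Energy(\mu,r_0)$ as a product of scale-$n$ factors, and the localization $|x_1-x_3|,|x_2-x_4|\lesssim K^{-2n+1}$ you impose on the quadruples is artificial --- the energy counts \emph{all} $(x_3,x_4)$ with $x_3+x_4\approx x_1+x_2$, with no a~priori proximity to $(x_1,x_2)$, so it is unclear what ``the trivial scale-$n$ energy count'' is or why gains at different $n$ should multiply.

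The paper avoids this entirely. It first rewrites $\Energy(\mu,r_0)\lesssim r_0^{-1}\int \mu^2(S_z)^2\,dz$ for strips $S_z=\{(x,y):|x+y-z|\le r_0\}$, and then splits $[-1,1]^2$ into an exceptional set $E$ (pairs lying in \emph{no} product of left near-edges at any scale $\le N$) and its complement. Your abundance argument becomes a geometric decrease $\mu^2(E_{n+1})\le(1-cC^{-4}K^{-4\delta})\mu^2(E_n)$, making $\mu^2(E)$ negligible. For the complement the paper does \emph{not} iterate over scales: for each fixed $z$ it bounds $\mu(\pi(X^2\cap S_z\setminus E))$ by summing over \emph{all} left-edge pairs $(I_1,I_2)$ at \emph{all} scales at once. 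The key lemma assigns to each such pair an auxiliary interval $Y_{I_1,I_2}$ (lying in the large gap to the right of $I_1$, with the left-edge property of $I_2$ used to locate the projection of $S_z$) such that the contribution of $(I_1,I_2)$ is $\lesssim C^2K^{-\delta/2}\mu(Y_{I_1,I_2})$ and, crucially, the $Y_{I_1,I_2}$ are pairwise \emph{disjoint} across both pairs and scales. Summing then yields a single factor $C^{O(1)}K^{-\delta/2}$ for the entire non-exceptional contribution; no product over scales is needed. In particular your per-scale gain is actually the power $K^{-\delta/2}$ rather than $1-cC^{-O(1)}$, and the disjointness trick is what replaces your telescoping.
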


We now turn to the proof of this proposition.  Let $0 < \delta < 1$, $C>1$, $0 < \eps \leq 1/2$, and define $r_0$ by \eqref{r0-def}.  We let $X \subset \R^d$ be $\delta$-regular on scales $[r_0,1]$ with constant $C$ and associated regular measure $\mu_X$.  Our task is to establish the bound
$$ \Energy(\mu|_{[-1,1]}, r_0) \leq \eps r_0^\delta.$$

We first estimate the left-hand side by an integral involving the $\mu^2$-measure of various ``strips'' $S_z$:

\begin{lemma}\label{mor}  One has
\begin{equation}\label{emor}
\Energy(\mu|_{[-1,1]}, r_0) \lesssim r_0^{-1} \int_{[-3,3]} \mu^2( S_z )^2\ dz
\end{equation}
where $\mu^2 = \mu \times \mu$ is the product of two copies of the measure $\mu$, and for each $z \in \R$, $S_z$ denotes the strip
$$ S_z \coloneqq \{ (x,y) \in [-1,1]^2: |x+y-z| \leq r_0 \}$$
(see Figure \ref{fig:strip}).
\end{lemma}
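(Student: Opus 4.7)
The proof should be a direct application of Fubini's theorem. The key identity is that the integrand $\mu^2(S_z)^2$ counts pairs of pairs $((x_1,x_2), (x_3,x_4)) \in [-1,1]^4$ whose sums $x_1+x_2$ and $x_3+x_4$ both lie within $r_0$ of $z$. So swapping the order of integration should turn $\int_{-3}^3 \mu^2(S_z)^2\, dz$ into a $\mu^4$-integral of a tent-shaped weight in the variable $x_1+x_2-x_3-x_4$.

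Concretely, my plan is to expand
\[
\int_{-3}^{3} \mu^2(S_z)^2 \, dz = \int_{[-1,1]^4} \left(\int_{-3}^{3} 1_{|x_1+x_2-z|\leq r_0}\, 1_{|x_3+x_4-z|\leq r_0}\, dz \right) d\mu^4(x_1,x_2,x_3,x_4),
\]
using Fubini--Tonelli (all measures are non-negative, so this is unproblematic). For $(x_i) \in [-1,1]^4$ the sums $x_1+x_2, x_3+x_4$ lie in $[-2,2]$, so the $z$-integration can be extended to all of $\R$ without changing the value, and the inner integral is the length of the intersection of two intervals of length $2r_0$ centered at $x_1+x_2$ and $x_3+x_4$, namely
\[
\max\bigl(0,\, 2r_0 - |x_1+x_2-x_3-x_4|\bigr).
\]

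Then I would simply bound this integrand below by $r_0 \cdot 1_{|x_1+x_2-x_3-x_4|\leq r_0}$, which yields
\[
\int_{-3}^{3} \mu^2(S_z)^2\, dz \;\geq\; r_0 \cdot \mu^4\bigl(\{(x_i) \in [-1,1]^4 : |x_1+x_2-x_3-x_4| \leq r_0\}\bigr) \;=\; r_0\, \Energy(\mu|_{[-1,1]}, r_0),
\]
and rearranging gives \eqref{emor} with implied constant $1$.

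There is no real obstacle here: the only things to check are that the tent function $\max(0, 2r_0 - t)$ indeed dominates $r_0 \cdot 1_{t \leq r_0}$ (trivial), and that the $z$-range $[-3,3]$ is large enough to capture the full intersection of the two intervals (true because $|x_1+x_2|, |x_3+x_4| \leq 2$ and $r_0 \leq 1$, so both centers and a $2r_0$-neighborhood of each sit inside $[-3,3]$). So the lemma is just a soft rewriting of the energy as a second moment of the sum-measure $\mu|_{[-1,1]} * \mu|_{[-1,1]}$ mollified at scale $r_0$.
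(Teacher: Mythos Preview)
Your proposal is correct and follows essentially the same approach as the paper: both expand $\mu^2(S_z)^2$ as a four-fold $\mu$-integral, swap the order of integration via Fubini--Tonelli, and bound the resulting inner $z$-integral below by $\gtrsim r_0$ on the energy set. Your version is slightly more explicit (computing the inner integral as the tent function $\max(0,2r_0-|x_1+x_2-x_3-x_4|)$ and noting that the implied constant is $1$), but the argument is the same.
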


\begin{figure} [t]
\centering
\includegraphics[width=4in]{./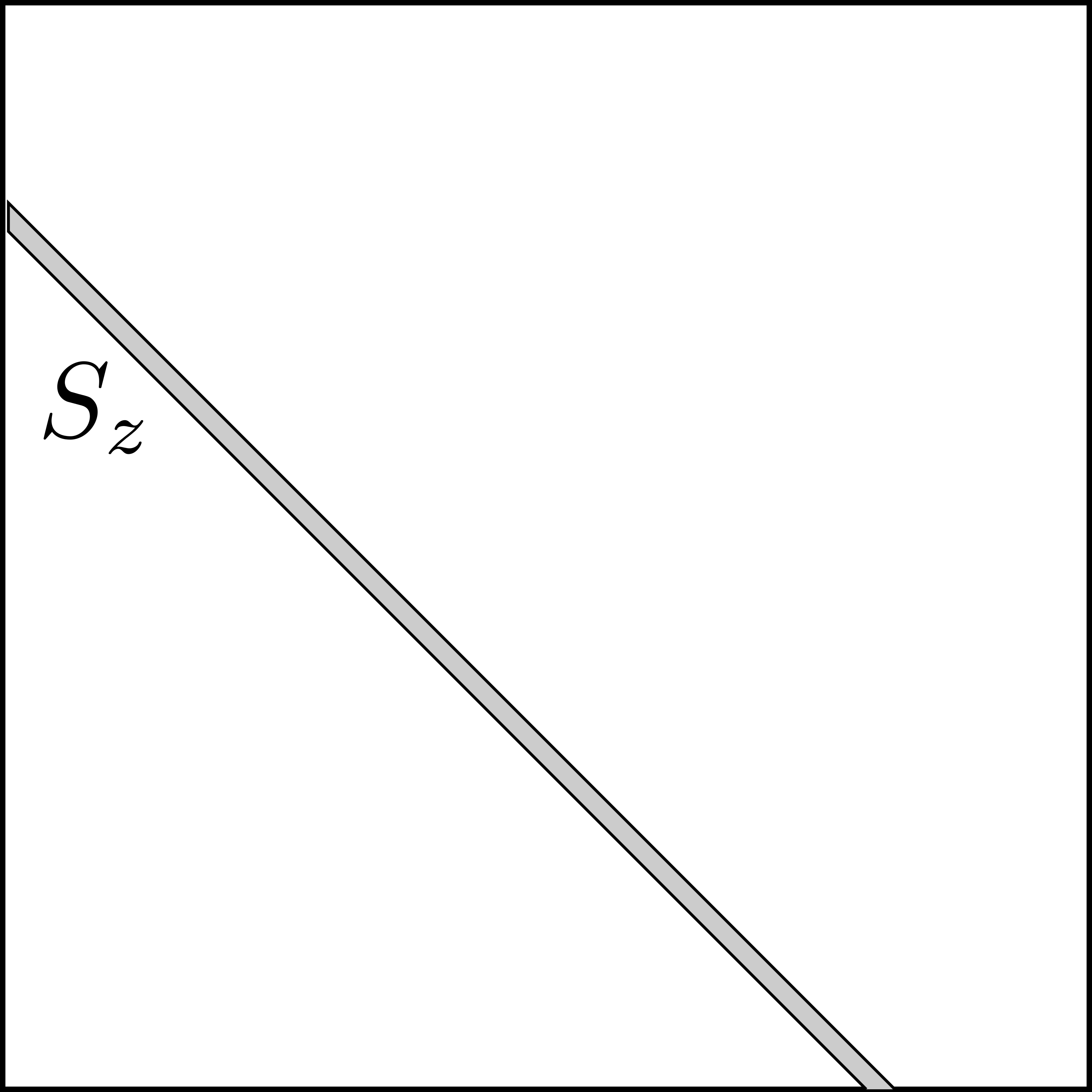}
\caption{A strip $S_z$.}
\label{fig:strip}
\end{figure}

\begin{proof}  By the Fubini--Tonelli theorem, the right-hand side is
$$ r_0^{-1} \int_{\R^4} \left(\int_{[-3,3]} 1_{|x_1+x_4-z|, |x_2+x_3-z| \leq r_0}\ dz\right)\ d\mu^4(x_1,x_2,x_3,x_4).$$
If $x_1,x_2,x_3,x_4 \in [-1,1]$ are such that $|x_1-x_2-x_3+x_4| \leq r_0$, then direct calculation shows that
$$ \int_{[-3,3]} 1_{|x_1+x_4-z|, |x_2+x_3-z| \leq r_0}\ dz \gtrsim r_0$$
and the claim follows.
\end{proof}

To estimate the right-hand side of \eqref{emor} we shall remove a small exceptional set $E$ from $[-1,1]^2$:

\begin{corollary}\label{conc}  For any measurable subset $E$ of $[-1,1]^2$, one has
$$ \Energy(\mu|_{[-1,1]}, r_0) \lesssim C^3 r_0^\delta \left( \mu^2(E) + \sup_{z \in [-3,3]} \mu(\pi(X^2 \cap S_z \backslash E)) \right)$$
where $\pi \colon \R^2 \to \R$ is the projection to the first coordinate: $\pi(x,y) \coloneqq x$.
\end{corollary}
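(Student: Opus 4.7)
The plan is to start from the bound
\[
\Energy(\mu|_{[-1,1]}, r_0) \lesssim r_0^{-1} \int_{[-3,3]} \mu^2(S_z)^2 \, dz
\]
supplied by Lemma \ref{mor}, decompose $\mu^2(S_z)$ into the contribution from $E$ and from its complement, and then combine two complementary ways of bounding $\mu^2(S_z)$ with the simple Fubini identity $\int_{[-3,3]} \mu^2(S_z \cap F)\, dz = 2 r_0 \mu^2(F)$ (valid for any measurable $F \subset [-1,1]^2$, since the $z$-fiber of the characteristic function of $S_z$ has length $2r_0$).

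First I would establish the pointwise estimate $\mu^2(S_z) \lesssim C^2 r_0^\delta$. For each fixed $x \in [-1,1]$, the $y$-fiber $\{y : (x,y) \in S_z\}$ is contained in the closed ball $B(z-x,r_0)$, so the upper regularity bound (Definition \ref{reg-set}(ii), applied at scale $r_0 \in [r_0,1]$) gives $\mu(\{y : (x,y) \in S_z\}) \leq C r_0^\delta$. Integrating in $x$ against $\mu$ and using $\mu([-1,1]) = \mu(B(0,1)) \leq C$ yields the claim.

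Next, since $\mu$ is supported on $X$, I can split $\mu^2(S_z) = \mu^2(S_z \cap E) + \mu^2(X^2 \cap S_z \setminus E)$, and the same Fubini-plus-regularity argument now restricted to those $x$ for which the fiber is nonempty gives
\[
\mu^2(X^2 \cap S_z \setminus E) \leq C r_0^\delta \cdot \mu\bigl(\pi(X^2 \cap S_z \setminus E)\bigr) \leq C r_0^\delta M,
\]
where $M \coloneqq \sup_{z \in [-3,3]} \mu(\pi(X^2 \cap S_z \setminus E))$. I then write
\[
\mu^2(S_z)^2 \leq \mu^2(S_z)\cdot\mu^2(S_z \cap E) + \mu^2(S_z)\cdot\mu^2(X^2 \cap S_z \setminus E).
\]

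Integrating the first summand uses the pointwise bound $\mu^2(S_z) \leq C^2 r_0^\delta$ together with $\int \mu^2(S_z \cap E)\, dz = 2 r_0 \mu^2(E)$, giving a contribution $\lesssim C^2 r_0^{1+\delta} \mu^2(E)$. Integrating the second summand uses the pointwise bound $\mu^2(X^2 \cap S_z \setminus E) \leq C r_0^\delta M$ together with $\int \mu^2(S_z)\, dz = 2 r_0 \mu^2([-1,1]^2) \leq 2 r_0 C^2$, giving $\lesssim C^3 r_0^{1+\delta} M$. Summing and dividing by $r_0$ yields $\Energy(\mu|_{[-1,1]}, r_0) \lesssim C^3 r_0^\delta (\mu^2(E) + M)$ via Lemma \ref{mor}. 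This is essentially a bookkeeping exercise combining Fubini with the upper regularity of $\mu$; there is no substantive obstacle, and the exponent $3$ on $C$ simply reflects one factor arising from the strip estimate on $\mu^2(S_z\setminus E)$ and two more from bounding $\mu^2([-1,1]^2)$ by $C^2$.
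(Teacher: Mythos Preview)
Your proof is correct and follows essentially the same approach as the paper: start from Lemma~\ref{mor}, split $\mu^2(S_z)$ into the $E$ and $X^2\cap S_z\setminus E$ pieces, and control each term using Fubini together with the upper regularity bounds $\mu(B(\cdot,r_0))\leq Cr_0^\delta$ and $\mu([-1,1])\leq C$. The only organizational difference is that for the first summand you first isolate the pointwise estimate $\mu^2(S_z)\lesssim C^2 r_0^\delta$ and then apply the clean identity $\int_{[-3,3]}\mu^2(S_z\cap E)\,dz=2r_0\,\mu^2(E)$, whereas the paper expands $\int \mu^2(S_z\cap E)\,\mu^2(S_z)\,dz$ directly as a four-fold integral before applying regularity; both routes yield the same $C^2 r_0^{1+\delta}\mu^2(E)$ bound and the second summand is handled identically in both arguments.
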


\begin{proof}  Since $\mu^2$ is supported on $X^2$, we can split 
$$ \mu^2( S_z )^2 \leq \mu^2(S_z \cap E) \mu^2(S_z) +  \mu^2(X^2 \cap S_z \backslash E) \mu^2(S_z) $$
and thus
\begin{align*}
\Energy(\mu|_{[-1,1]}, r_0) &\lesssim r_0^{-1} \int_{[-3,3]} \mu^2( S_z \cap E ) \mu^2(S_z)\ dz\\
&\quad + r_0^{-1} \left(\sup_{z \in [-3,3]} \mu^2(X^2 \cap S_z \backslash E)\right) \int_{[-3,3]} \mu^2(S_z)\ dz.
\end{align*}
By the Fubini--Tonelli theorem one has
\begin{align*}
\int_{[-3,3]} \mu^2(S_z)\ dz &= \int_{[-1,1]^2} \int_{[-3,3]} 1_{|x+y-z| \leq r_0}\ dz d\mu^2(x,y) \\
&\lesssim r_0 \mu([-1,1])^2 \\
&\lesssim C^2 r_0
\end{align*}
and similarly
\begin{align*}
\int_{-3}^3 \mu^2( S_z \cap E ) \mu^2(S_z)\ dz &= \int_E \int_{[-1,1]^2} \int_{-3}^3 1_{|x_1+x_4-z|, |x_2+x_3-z| \leq r_0}\ dz d\mu^2(x_2,x_3) d\mu^2(x_1,x_4) \\
&\lesssim r_0 \int_E \int_{[-1,1]} \mu(B(x_1-x_2+x_4,2r_0))\ d\mu(x_2) d\mu^2(x_1,x_4) \\
&\lesssim r_0 \int_E \mu([-1,1]) C r_0^\delta\ d\mu^2(x_1,x_4) \\
&\lesssim C^2 r_0^{1+\delta} \mu^2(E).
\end{align*}
Finally, for any $z$, one has from the Fubini--Tonelli theorem again that
\begin{align*}
\mu^2(X^2 \cap S_z \backslash E) &\leq \int_{\pi(X^2 \cap S_z \backslash E)} \mu([x-r_0,x+r_0])\ d\mu(x) \\
&\lesssim C r_0^\delta \mu( \pi(X^2 \cap S_z \backslash E) ).
\end{align*}
Combining all these estimates, we obtain the claim.
\end{proof}

To construct this exceptional set $E$ we will take advantage of the porous nature of the set $X$ as viewed through a sparse $K$-adic mesh.  Let $K \geq 1000$ be a perfect square and multiple of $100$ to be chosen later, chosen so large that
\begin{equation}\label{k1d}
K^{\delta/2}, K^{(1-\delta)/2} \geq C_0 C^2
\end{equation}
for some large absolute constant $C$.  Define a \emph{$K$-adic interval} to be a half-open interval of the form $I = [jK^{-n}, (j+1)K^{-n})$ for some integers $j,n$.  Such an interval will be said to be \emph{active} if it intersects $X$, and \emph{inactive} otherwise.  We have the following basic porosity property:

\begin{lemma}[Porosity]\label{poros}  There does not exist a sequence $I_1,\dots,I_{\sqrt{K}}$ of $\sqrt{K}$ consecutive active $K$-adic intervals of equal length in $[r_0,1]$.
\end{lemma}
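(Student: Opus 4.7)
The plan is to argue by contradiction, pitting the lower regularity bound (iii) of Definition \ref{reg-set} against the upper bound (ii). Suppose we have $\sqrt{K}$ consecutive active $K$-adic intervals $I_1,\ldots,I_{\sqrt{K}}$ of common length $\ell=K^{-n}\in[r_0,1]$, and let $J = I_1\cup\cdots\cup I_{\sqrt{K}}$ denote their union, an interval of length $\sqrt{K}\,\ell$. The strategy is standard for Ahlfors--David regular sets: a long run of active intervals forces $X$ to look $1$-dimensional at that scale, conflicting with $\delta<1$.

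First I would extract a lower bound on $\mu_X$ of a slight thickening of $J$. Activity of each $I_j$ lets us pick $x_j\in X\cap I_j$; since the $I_j$ are consecutive $K$-adic intervals of length $\ell$, any two points drawn from $I_j$ and $I_{j+3}$ lie at distance strictly greater than $2\ell$, so the balls $B(x_j,\ell)$ indexed by $j\equiv 1\pmod 3$ are pairwise disjoint. Applying the lower bound (iii) to each such ball, which is legitimate because the radius $\ell$ lies in $[r_0,1]$ and each $x_j$ lies in $X$, and summing over the $\gtrsim \sqrt{K}$ available indices yields
\[
\mu_X(J^+)\;\gtrsim\; C^{-1}\sqrt{K}\,\ell^\delta,
\]
where $J^+$ denotes the $\ell$-neighborhood of $J$, an interval of length $O(\sqrt{K}\,\ell)$.

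Second, I would compete this against an upper bound for $\mu_X(J^+)$ coming from (ii) at the full scale of $J^+$. In the main range, $J^+$ fits inside a single ball $B(x_1,2\sqrt{K}\,\ell)$ whose radius still lies in $[r_0,1]$, so (ii) gives
\[
\mu_X(J^+)\;\leq\; C(2\sqrt{K}\,\ell)^\delta\;\lesssim\; C\,K^{\delta/2}\,\ell^\delta.
\]
Cancelling $\ell^\delta$ against the previous display forces $K^{(1-\delta)/2}\lesssim C^2$, contradicting the defining inequality $K^{(1-\delta)/2}\geq C_0 C^2$ from \eqref{k1d} once the absolute constant $C_0$ is chosen sufficiently large.

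The only step requiring real care is checking that every auxiliary radius genuinely lies in the regularity window $[r_0,1]$, in particular that $2\sqrt{K}\,\ell\leq 1$; this covers all $n\geq 1$ since $K\geq 1000$. The marginal case $\ell=1$, where $J$ itself has length exceeding $1$, can be dispatched by working instead at the intermediate scale $K^{-1/2}\in[r_0,1]$: on that scale the lower-bound step produces $\sim\sqrt{K}$ disjoint balls each of mass $\gtrsim C^{-1}K^{-\delta/2}$ while the competing upper bound comes from covering $J^+$ by a bounded number of unit balls, and here it is the companion half of \eqref{k1d}, namely $K^{\delta/2}\geq C_0C^2$, that supplies the contradiction. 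The symmetric form of \eqref{k1d} is tailored precisely so that one of the two inequalities applies regardless of whether $\delta$ is close to $0$ or $1$.
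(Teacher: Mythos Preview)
Your argument for $n\ge 1$ is correct and is essentially the paper's proof: the paper sums the lower bounds $\mu(I_j+[-\ell,\ell])\gtrsim C^{-1}\ell^\delta$ using bounded overlap (rather than your thinning to $j\equiv 1\pmod 3$) and then applies upper regularity at radius $\sim\sqrt{K}\,\ell$ to force $K^{(1-\delta)/2}\lesssim C^2$, contradicting \eqref{k1d}. The two variants are interchangeable.

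Your handling of the edge case $\ell=1$ is flawed, however. The union $J$ of $\sqrt{K}$ consecutive unit intervals has length $\sqrt{K}$, so $J^+$ cannot be covered by a bounded number of unit balls; any cover requires $\gtrsim\sqrt{K}$ of them. With that correction the upper bound is only $\lesssim C\sqrt{K}$, while your lower bound is $\gtrsim C^{-1}K^{(1-\delta)/2}$, and comparing yields $K^{-\delta/2}\lesssim C^2$, which is no contradiction. In fact the $n=0$ case of the lemma appears to be genuinely false as stated: placing translates of a fixed $\delta$-regular Cantor set in arbitrarily many consecutive unit intervals produces a set that remains $\delta$-regular on $[r_0,1]$ with comparable constant, yet has as many consecutive active unit intervals as one likes. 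The paper's own proof is equally silent on this point (it asserts the upper bound $\lesssim CK^{(1/2-n)\delta}$ without checking that the implicit radius $\sim K^{1/2-n}$ stays in the regularity window $[r_0,1]$), and the lemma is only ever invoked downstream for intervals of length at most $K^{-1}$. The cleanest fix is to read the statement with $n\ge 1$ and simply drop your attempted repair of the $\ell=1$ case.
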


\begin{proof}  Suppose for contradiction that there was a sequence $I_1,\dots,I_{\sqrt{K}}$ of consecutive active $K$-adic intervals of some length $K^{-n} \in [r_0,1]$.  From the regularity of $\mu$, we have
$$ \mu( I_j + [-K^{-n}, K^{-n}]) \gtrsim C^{-1} K^{-n\delta}$$
for all $j=1,\dots,\sqrt{K}$, hence on summing and noting the bounded overlap of the intervals $I_j + [-K^{-n}, K^{-n}]$
$$ \mu\left( \bigcup_{j=1}^{\sqrt{K}} I_j + [-K^{-n}, K^{-n}]\right) \gtrsim C^{-1} \sqrt{K} K^{-n\delta}.$$
On the other hand, from regularity again we have
$$ \mu\left( \bigcup_{j=1}^{\sqrt{K}} I_j + [-K^{-n}, K^{-n}]\right) \lesssim C K^{(1/2-n)\delta}.$$
This contradicts \eqref{k1d} if $C_0$ is large enough.
\end{proof}

Now we make some further definitions concerning $K$-adic intervals (see Figure \ref{fig:family}):

\begin{itemize}
\item[(i)] If $I = [jK^{-n}, (j+1)K^{-n})$ is a $K$-adic interval, we define the \emph{right shift} of $I$ to be the $K$-adic interval $I^+ \coloneqq [(j+1)K^{-n}, (j+2)K^{-n})$, the \emph{left shift} of $I$ to be the $K$-adic interval $I^- \coloneqq [(j-1)K^{-n}, jK^{-n})$, and the \emph{parent} $I^*$ to be the unique $K$-adic interval of $I$ of length $K^{1-n}$ that contains $I$. We then call $I$ a \emph{child} of $I^*$, and define a \emph{grandchild} to be a child of a child.  
\item[(ii)] Given two intervals $I,J$, we say that $I$ \emph{lies to the left} of $J$, or equivalently that $J$ \emph{lies to the right} of $I$, if $x < y$ for all $x \in I$ and $y \in J$.
\item[(iii)] A \emph{sibling} of a $K$-adic interval $I$ is another $k$-adic interval $J$ with the same parent as $I$: $I^* = J^*$.  A \emph{left sibling} (resp. right sibling) of $I$ is a sibling $J$ that lies to the left (resp. right) of $I$.
\item[(iv)] A \emph{left edge} is an active $K$-adic interval $I$ of length $K^{-2n} \in [r_0,1]$ for some $n \geq 1$, with the property that all left siblings of $I$, as well as the left shift $(I^*)^-$ of $I$, are inactive.  
\item[(v)]  A \emph{left near-edge} is a $K$-adic interval $J$, to which there is associated a left edge $I$ of the same length as $J$ and equal to or to the left of $J$, such that all the $K$-adic intervals of the same length as $J$ between $J$ and $I$ are active.  (In particular, if $I$ is a left edge, then $I$ and $I^+$ are left near-edges, and from Lemma \ref{poros} each left edge is associated to at most $K/100$ left near-edges, which are all adjacent to each other and of the same length as the left edge, with the rightmost of these left near-edges being inactive.
\end{itemize}

\begin{figure} [t]
\centering
\includegraphics[width=4in]{./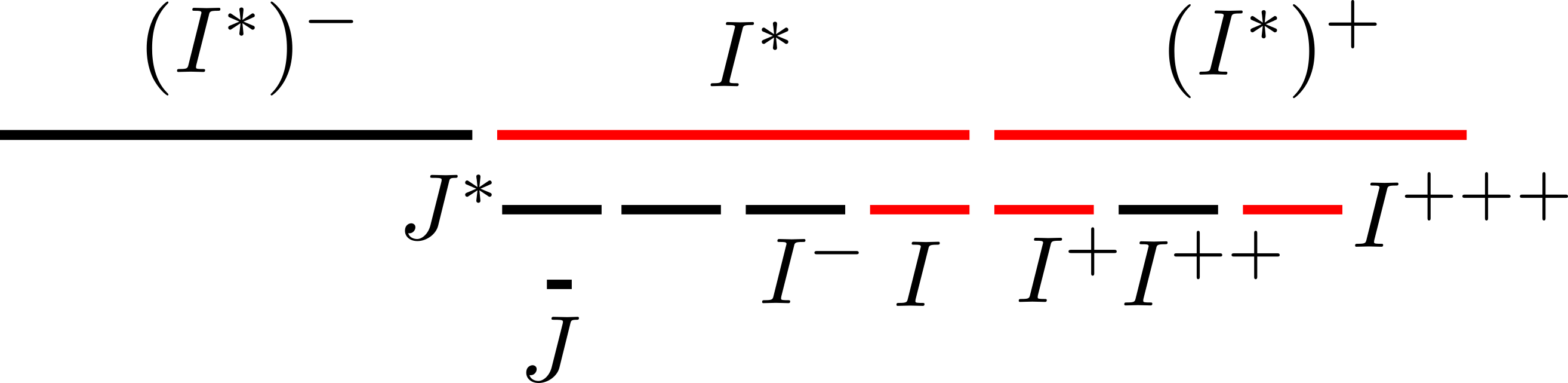}
\caption{Some selected $K$-adic intervals (with $K=4$).  The interval $J$ here is a grandchild of $I^*$, and its parent $J^*$ is a left sibling of $I$.  If the intervals in red are active and the intervals in black are inactive, and $I$ has length $K^{-2n} \in [r_0,1]$ for some $n \geq 1$, then $I$ will be a left edge and $I, I^+, I^{++}$ will be left near-edges, but none of the other intervals of length $K^{-2n}$ depicted here will be left near-edges.}
\label{fig:family}
\end{figure}

One could also define the notion of a right edge and right near-edge, but we will not need to do so here.

\begin{example} Let $K=10$, and let $X \subset [0,1]$ be the set of all real numbers whose decimal expansions take values in the set $\{ 1, 2, 5, 6\}$.  Then an $10$-adic interval of length $10^{-2n}$ is an interval of the form $[\frac{a}{10^{2n}}, \frac{a+1}{10^{2n}})$, where $a$ is an integer.  This interval is active if $a$ is positive with decimal expansion length $2n$ and has all digits in $\{1,2,5,6\}$; a left-edge arises if furthermore the final digit is $1$ and the penultimate digit lies in $\{1,5\}$.  Finally, a left near-edge of length $10^{-2n}$ arises if $a$ is positive with decimal expansion of length $2n$ with the final digit in $\{1,2,3\}$, penultimate digit in $\{1,5\}$, and all other digits in $\{1,2,5,6\}$.  Observe that almost all elements of $X$ will lie in at least one left near-edge, in the sense that the set of exceptions has strictly smaller dimension than $X$ itself.  This phenomenon of abundance of left near-edges is crucial to our argument.
\end{example}

We make the following basic observations.  Firstly, if $I$ is a left edge, then it is active, and from the regularity property of $\mu$ we have
$$ \mu( I^- \cup I \cup I^+ ) \gtrsim C^{-1} |I|^\delta.$$
On the other hand, as $I$ is a left edge, $I^-$ cannot be active (it is either a left sibling of $I$, or lies in $(I^*)^-$) and thus
\begin{equation}\label{muii}
 \mu( I \cup I^+ ) \gtrsim C^{-1} |I|^\delta.
 \end{equation}
Thus at least one of the left near-edges $I,I^+$ associated to $I$ must absorb a relatively large amount of the mass of $\mu$ (compared to the upper bound of $C |I|^\delta$ coming from the regularity hypothesis).

Next, we claim that any two left edges $I,J$ of the same length $|I|=|J|$ must be separated from each other by at least $K|I|$.  Indeed, we may assume without loss of generality that $J$ lies to the left of $I$; as the left siblings of $I$, as well as the entirety of $(I^*)^-$, are inactive, this forces $J$ to lie to the left of $(I^*)^-$, giving the claim.  In particular, we see that the left near-edges associated to $I$ are disjoint from the left near-edges associated to $J$.

Now we make the key claim that lets us produce many left edges at many scales.

\begin{lemma}[Many left edges]\label{many-left}  Let $n \geq 1$ be such that $K^{-2n} \in [K^2 r_0, 1]$.
 Let $I$ be an active $K$-adic interval of length $K^{-2n}$.  Then $I \cup I^-$ contains a left edge $J$ of length $K^{-2} |I| = K^{-2(n+1)}$.  Furthermore, if $J$ is associated to a left near-edge $J'$ that is contained in a larger left near-edge $\tilde J$, then $I$ is also contained in a left-near edge of the same length as $\tilde J$.
\end{lemma}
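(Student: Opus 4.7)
For Part 1 the plan is to invoke the porosity lemma. Consider the $2K$ consecutive $K$-adic intervals of length $K^{-(2n+1)}$ tiling $I^-\cup I$. Since $I$ is active, at least one of the $K$ rightmost among these intervals is active. By the porosity lemma no $\sqrt{K}$ consecutive ones can be active, so the rightmost maximal run of active intervals in this sequence begins at some position $a \geq 2$ from the left. Taking $J^*$ to be the first interval of this run and $J$ to be the leftmost active child of $J^*$, the three requirements in the definition of a left edge of length $K^{-2(n+1)}$ are immediate: $J$ is active by construction, the left siblings of $J$ (the children of $J^*$ to the left of $J$) are inactive by minimality, and $(J^*)^-$ is inactive because it lies just to the left of the rightmost maximal active run.

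For Part 2 I write $|\tilde J|=K^{-2m}$ with $1 \leq m \leq n$, and let $I_m$ denote the ancestor of $I$ at level $K^{-2m}$; my goal is to show that $I_m$ is itself a left near-edge of length $K^{-2m}$. The key geometric input is that $J \subseteq I \cup I^-$ and $J'$ lies within $(K/100)K^{-2(n+1)} = K^{-2n-1}/100$ to the right of $J$, so the horizontal distance between $I$ and $J'$ is bounded by $2K^{-2n} + K^{-2n-1}/100$, which is much smaller than $K^{-2m}$ (using $m \leq n$ and $K$ large). Hence $\tilde J$, which is the $K^{-2m}$-ancestor of $J'$, must equal $I_m$ itself or one of its two immediate $K$-adic neighbors $(I_m)^\pm$. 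The case $\tilde J = I_m$ is trivial. For $\tilde J = (I_m)^+$, write $\tilde J = E^{+l}$ within the near-edge sequence of its associated left edge $E$ of length $K^{-2m}$; one rules out $l=0$ because then $I_m = E^-$ would be inactive (as it is either a left sibling of $E$ or part of $(E^*)^-$), contradicting that $I_m \supseteq I$ is active. Therefore $l \geq 1$ and $I_m = E^{+(l-1)}$ automatically lies in the same near-edge sequence.

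The main obstacle is the remaining case $\tilde J = (I_m)^-$. Here $I_m = E^{+(l+1)}$ would belong to the near-edge sequence of its associated left edge $E$ only if $\tilde J = E^{+l}$ is itself active, since only the rightmost member of such a sequence is permitted to be inactive; a priori this activity is not guaranteed. To verify it, my plan is to exploit the monotonicity of the right-shift: $J'$ is obtained from $J$ by iterated right shifts, so $J' \subseteq (I_m)^-$ forces $J$ to lie weakly to the left of $(I_m)^-$, and in particular $J \not\subseteq I$ (which would place $J$ in $I_m$); hence $J \subseteq I^-$, and moreover $I^-$ itself must sit in $(I_m)^-$ rather than $I_m$ (otherwise $J' \geq J$ would lie in $I_m$ or further right). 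This gives $J \subseteq I^- \subseteq (I_m)^- = \tilde J$, and since $J$ is a left edge and hence active, $\tilde J$ is active, so $I_m$ belongs to the near-edge sequence of $E$, completing the proof.
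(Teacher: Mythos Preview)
Your proof is correct and follows essentially the same strategy as the paper: use porosity to locate a left edge $J$ inside $I^-\cup I$, then do a case analysis on the position of $\tilde J$ relative to $I$ to propagate the near-edge property upward.

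There are two minor differences worth noting. In Part~1, the paper instead picks an inactive child $I'$ of $I^-$ (which exists by porosity) and takes $J$ to be the leftmost active grandchild to the right of $I'$; your ``rightmost maximal active run'' construction is an equally valid variant. In Part~2, the paper argues by the dichotomy $J'\subset I$ versus $J'\subset I^-$, whereas you organize the cases by whether $\tilde J$ equals $I_m$, $(I_m)^-$, or $(I_m)^+$. Your treatment of the case $\tilde J=(I_m)^+$ (which arises when $J'$ spills over into $I^+$) is not explicitly addressed in the paper's write-up, so your case analysis is arguably the more complete of the two; the argument you give there (ruling out $l=0$ via the inactivity of $E^-$) is exactly the right observation. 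One small quibble: the phrase ``much smaller than $K^{-2m}$'' is inaccurate when $m=n$, but your actual conclusion that $\tilde J\in\{(I_m)^-,I_m,(I_m)^+\}$ holds regardless, since $J'$ meets $I^-\cup I\cup I^+$ and each of these is contained in one of $(I_m)^-,I_m,(I_m)^+$.
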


\begin{figure} [t]
\centering
\includegraphics[width=4in]{./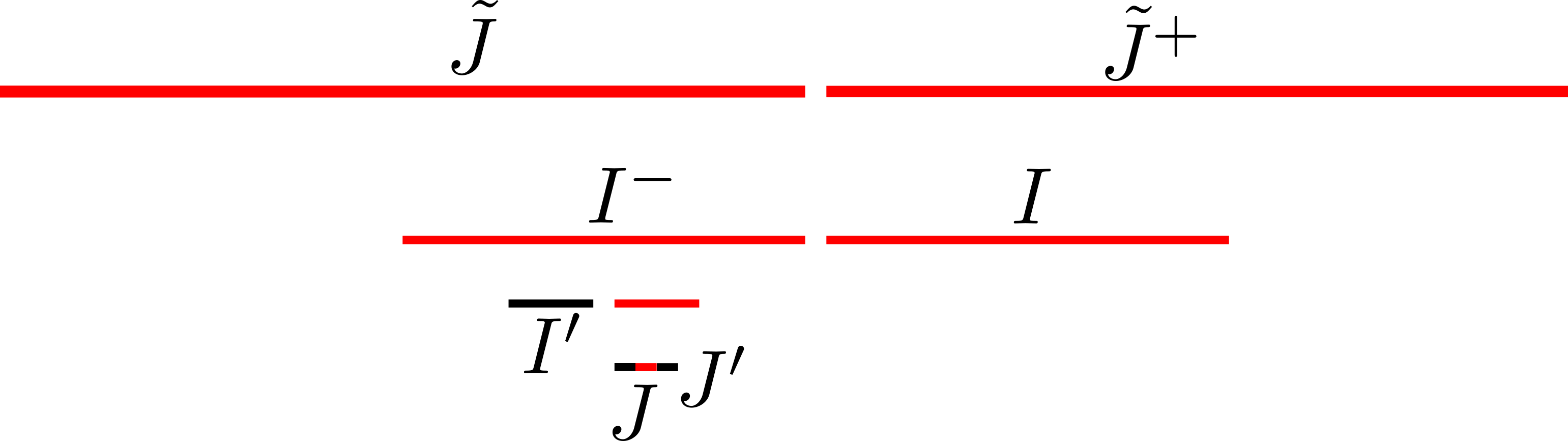}
\caption{A typical situation that arises in the proof of Lemma \ref{many-left} (only a portion of the large intervals $\tilde J, \tilde J^+$ are depicted here).  Active intervals are displayed in red, inactive intervals in black. In this example $J, J'$ is contained in $I^-$, but it is also possible for one or both of these intervals to lie in $I$ instead.  Similarly, it is also possible for $I$ to lie in $\tilde J$ rather than $\tilde J^+$. Note that $J$ is a left edge.}
\label{fig:manyleft}
\end{figure}

\begin{proof}  By Lemma \ref{poros}, at least one of the children $I'$ of $I^-$ will be inactive.  On the other hand, by the pigeonhole principle at least one of the grandchildren of $I$ is active.  In particular we can find an active interval $J$ of length $K^{-2} |I|$ that lies to the right of $I'$, is a grandchild of either $I$ or $I^-$, and is the leftmost interval with these properties; see Figure \ref{fig:manyleft}.  By Lemma \ref{poros} $J$ lies at a distance of at most $\sqrt{K} |I'| = K^{-1/2} |I'|$ of $I'$, and thus is a grandchild of either $I$ or $I'$.

By construction, all the intervals of length $K^{-2} |I|$ between $I'$ and $J$ are inactive.  As $I'$ is also inactive, this makes $J$ a left edge by definition.  Now suppose that there is a left near-edge $J'$ associated to $J$ is contained in a larger left near-edge $\tilde J$, thus $\tilde J$ is at least as large as $I$.  Note that $J'$ is either equal to $J$, or lies to the right at a distance of at most $\sqrt{K} |J| = K^{-3/2} |I|$ thanks to Lemma \ref{poros}; in particular, $J'$ also lies in $I^- \cup I$.  If $J'$ lies in $I$ then $\tilde J$ will contain $I$ and we are done, so suppose $J'$ lies in $I^-$.  Then $\tilde J$ contains $I^-$, hence contain $J$, hence is active, hence $\tilde J^+$ is also a left near-edge. As $\tilde J$ contains $I^-$, $I$ will be contained in either $\tilde J$ or $\tilde J^+$, and the claim follows.
\end{proof}

Now we can construct the exceptional set $E \subset [-1,1]^2$.  Let $N$ be the largest integer such that $K^{-2N} \geq r_0$.  For any $0 \leq n \leq N$, we let $E_n$ be the set of all elements of $[-1,1]^2$ that do not lie in a square $I_1 \times I_2$, where $I_1,I_2$ are two left near-edges of equal length $K^{-2n'}$ for some $1 \leq n' \leq n$.  Clearly $E_0 = [-1,1]^2$, hence by regularity
$$ \mu^2(E_0) \lesssim C^2.$$

Thanks to Lemma \ref{many-left}, we can now show a geometric decrease in the measures of the $E_n$:

\begin{proposition}[Geometric decrease]  For every $0 \leq n \leq N-1$, one has
$$ \mu^2(E_{n+1}) \leq (1-c C^{-4} K^{-4\delta}) \mu^2(E_{n})$$
for some absolute constant $c>0$.
\end{proposition}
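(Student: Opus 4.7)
The plan is to reduce the two-dimensional decrement to a one-dimensional ``per-slice'' inequality via Fubini, and to prove the slice inequality using Lemma \ref{many-left}. The key structural observation is this: if for each $x \in [-1,1]$ one defines the \emph{type} $T(x) \subset \{1,\dots,n\}$ to be the set of scales $n'$ at which the $K^{-2n'}$-ancestor of $x$ is a left near-edge, then $(x_1,x_2) \in E_n$ if and only if $T(x_1) \cap T(x_2) = \emptyset$. Consequently the slice $E_n^{x_1} := \{x_2 : (x_1,x_2) \in E_n\}$ is, up to a $\mu$-null set, a disjoint union of those active $K^{-2n}$-intervals $A$---call them \emph{good for $x_1$}---whose $K^{-2n'}$-ancestor is not a left near-edge for any $n' \in T(x_1)$. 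Let $\mathcal{N}_{n+1}$ denote the union of all left near-edges of length $K^{-2(n+1)}$, so that $E_n \setminus E_{n+1} = E_n \cap (\mathcal{N}_{n+1} \times \mathcal{N}_{n+1})$.

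The main ingredient will be the slice claim: for every $x_1 \in [-1,1]$,
$$\mu(E_n^{x_1} \cap \mathcal{N}_{n+1}) \gtrsim C^{-2} K^{-2\delta}\, \mu(E_n^{x_1}).$$
Granting this, the symmetry of $E_n$ together with Fubini gives
$$\mu^2(E_n \cap \mathcal{N}_{n+1}^2) = \int_{\mathcal{N}_{n+1}} \mu(E_n^{x_1} \cap \mathcal{N}_{n+1})\, d\mu(x_1)$$
and also $\int_{\mathcal{N}_{n+1}} \mu(E_n^{x_1})\, d\mu(x_1) = \int \mu(E_n^{x_1} \cap \mathcal{N}_{n+1})\, d\mu(x_1)$. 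Applying the slice claim to each integrand in turn (the second time with the two coordinates swapped) yields $\mu^2(E_n \cap \mathcal{N}_{n+1}^2) \gtrsim C^{-4} K^{-4\delta}\, \mu^2(E_n)$, which is exactly the asserted decrement.

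For the slice claim, to each good $A$ I attach the left edge $J(A) \subset A \cup A^-$ of length $K^{-2(n+1)}$ furnished by Lemma \ref{many-left}, together with its two associated left near-edges $J(A)$ and $J(A)^+$; by \eqref{muii} one has $\mu(J(A) \cup J(A)^+) \gtrsim C^{-1} K^{-2(n+1)\delta}$. The crucial input is the ``Furthermore'' clause of Lemma \ref{many-left}: were $J(A)$ or $J(A)^+$ contained in a strictly larger left near-edge of length $K^{-2n'}$ with $n' \in T(x_1)$, then $A$ itself would be contained in a left near-edge of length $K^{-2n'}$, contradicting the goodness of $A$. Thus $J(A) \cup J(A)^+ \subset E_n^{x_1} \cap \mathcal{N}_{n+1}$.

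To aggregate, I exploit that the map $A \mapsto J(A)$ is at most $2$-to-$1$ (the condition $J(A) \subset A \cup A^-$ restricts the preimages to a pair of adjacent $K^{-2n}$-intervals) and that any two distinct left edges of the same length are separated by at least $K \cdot K^{-2(n+1)}$, so the sets $J \cup J^+$ for distinct left edges $J$ are pairwise disjoint. Combining the regularity upper bound $\mu(A) \leq C K^{-2n\delta}$ with the resulting lower bound $|\mathcal{A}^{x_1}| \geq C^{-1} K^{2n\delta}\, \mu(E_n^{x_1})$ on the number of good intervals, I obtain
$$\mu(E_n^{x_1} \cap \mathcal{N}_{n+1}) \geq \frac{|\mathcal{A}^{x_1}|}{2} \cdot c C^{-1} K^{-2(n+1)\delta} \gtrsim C^{-2} K^{-2\delta}\, \mu(E_n^{x_1}),$$
completing the slice claim. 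The principal obstacle I foresee is the careful invocation of the ``Furthermore'' clause---transferring the not-captured-at-coarser-scales property from $A$ down to the newly produced left near-edges $J(A)$ and $J(A)^+$; a minor secondary annoyance is that $A^-$ may lie outside $[-1,1]$ for the two extremal $A$, but this loss can be absorbed harmlessly into the implicit constants.
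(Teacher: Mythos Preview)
Your proof is correct and uses the same essential ingredients as the paper: Lemma~\ref{many-left} (including its ``Furthermore'' clause) to produce a left edge $J(A)$ inside $A \cup A^-$ for each active interval $A$ at scale $K^{-2n}$, the mass bound \eqref{muii} for $J(A) \cup J(A)^+$, the regularity upper bound $\mu(A) \leq C K^{-2n\delta}$, and the $O(1)$-to-$1$ nature of $A \mapsto J(A)$ together with the disjointness of the sets $J \cup J^+$ for distinct left edges $J$. The only organizational difference is that the paper works directly with the two-dimensional active squares $I_1 \times I_2 \subset E_n$ and attaches to each a small square $J'_1 \times J'_2 \subset E_n \setminus E_{n+1}$, whereas you factor the argument through the one-dimensional slice inequality $\mu(E_n^{x_1} \cap \mathcal{N}_{n+1}) \gtrsim C^{-2} K^{-2\delta}\,\mu(E_n^{x_1})$ and then apply it twice via Fubini and the symmetry of $E_n$; this is a clean repackaging rather than a different route.
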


\begin{proof}  It suffices to show that
$$ \mu^2(E_n \backslash E_{n+1}) \gtrsim C^{-4} K^{-4\delta} \mu^2(E_n).$$
By construction, $E_n$ is the union of squares $I_1 \times I_2$, where $I_1,I_2$ are $K$-adic intervals of length $K^{-2n}$ such that there is no $1 \leq n' \leq n$ for which $I_1, I_2$ are respectively contained in two left-near edges $\tilde I_1, \tilde I_2$ of length $K^{-2n'}$.  If $I_1$ or $I_2$ are inactive then the square $I_1 \times I_2$ has zero measure in $\mu$, so we can restrict attention to squares $I_1 \times I_2$ which are \emph{active} in the sense that $I_1$ and $I_2$ are both active.  By regularity, the contribution of each active square can be bounded by
$$ \mu^2(I_1 \times I_2) \lesssim (C |I_1|^\delta) (C |I_2|^\delta) = C^2 K^{-4n\delta}.$$
Thus, if there are $M$ active squares, we have
$$ \mu^2(E_n) \lesssim C^2 K^{-4n\delta} M$$
and so it will now suffice to establish the bound
$$ \mu^2(E_n \backslash E_{n+1}) \gtrsim  C^{-2} K^{-4n\delta-4\delta} M.$$
From Lemma \ref{many-left}, for each active square $I_1 \times I_2$ one can find left edges $J_1 \subset I_1^- \cup I_1$, $J_2 \subset I_2^- \cup I_2$ of length $K^{-2n-2}$ obeying the conclusions of the lemma.  From \eqref{muii} and the pigeonhole principle we can find for each $i=1,2$ an interval $J'_i$ that is either equal to $J_i$ or its right shift $J_i^+$, such that
$$ \mu(J'_i) \gtrsim C^{-1} (K^{-2n-2})^\delta$$
and hence
$$ \mu^2(J'_1 \times J'_2) \gtrsim C^{-2} K^{-4n\delta-4\delta}.$$
Note that $J'_1,J'_2$ are left near-edges associated to $J_1,J_2$ respectively.  In particular, from Lemma \ref{many-left}, since $I_1,I_2$ fail to be respectively contained in left-near edges of length $K^{-2n'}$ for some $1 \leq n' \leq n$, the same is true for $J'_1, J'_2$.  By construction of $E_{n+1}$, this implies that
$$ J'_1 \times J'_2 \subset E_n \backslash E_{n+1}.$$
Since $J'_1 \times J'_2$ lies within $O(K^{-2n})$ of $I_1 \times I_2$, we see that each square $J'_1 \times J'_2$ can be generated by at most $O(1)$ of the $M$ active squares $I_1 \times I_2$.  Thus we have
$$ \mu^2( E_n \backslash E_{n+1} ) \gtrsim C^{-2} K^{-4n\delta-4\delta} M$$
giving the claim.
\end{proof}

Iterating this proposition we see that
\begin{align*}
\mu^2(E) &\lesssim C^2 (1-c C^{-4} K^{-4\delta})^N \\
&\lesssim C^2 \exp( -c N C^{-4} K^{-4\delta} ) \\
&\lesssim C^2 r_0^{\frac{c'}{C^4 K^{4\delta} \log K}}
\end{align*}
for some absolute constants $c, c'>0$.

Now let $z \in [-3,3]$.  In view of Corollary \ref{conc}, we are interested in bounding the expression
$\mu(\pi(X^2 \cap S_z \backslash E)) )$.  We can of course write this as $\mu(Z)$, where $Z$ is the set
$$ Z \coloneqq \pi(X^2 \cap S_z \backslash E).$$
By construction of $E$, we can then bound
$$ \mu(Z) \leq \sum_{I_1,I_2} \mu\left( \pi\left(X^2 \cap S_z \cap \left(\bigcup_{I'_1} I'_1 \times \bigcup_{I'_2} I'_2\right)\right) \right)$$
where $I_1,I_2$ range over pairs of left-edges of equal length in $[r_0,1]$, and $I'_1,I'_2$ range over the left near-edges associated with $I_1,I_2$ respectively.  Now we make a key calculation.

\begin{lemma}[Bounding a piece of $\mu(Z)$]\label{muz-piece}  To every pair $I_1,I_2$ of left edges of equal length in $[r_0,1]$, one can find a set $Y_{I_1,I_2} \subset [-2,2]$ such that
\begin{equation}\label{yip}
 \mu\left( \pi\left( X^2 \cap S_z \cap \left(\bigcup_{I'_1} I'_1 \times \bigcup_{I'_2} I'_2\right) \right) \right) \lesssim C^2 K^{-\delta/2} \mu(Y_{I_1,I_2}),
 \end{equation}
where $I'_1, I'_2$ range over the left near-edges associated with $I_1, I_2$ respectively.
Furthermore, the sets $Y_{I_1,I_2}$ are disjoint as $(I_1,I_2)$ vary.
\end{lemma}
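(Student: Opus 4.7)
Write $\ell := K^{-2n}$ for the common length of the two left edges, and $J_i := \bigcup I'_i$ for the associated contiguous union of left near-edges, which by Lemma~\ref{poros} is an interval of length $L_i \in [\ell,\sqrt{K}\,\ell]$ starting at the left endpoint $a_i$ of $I_i$. The crucial structural feature, used throughout, is that $I_i$ being a left edge forces the region $(I_i^*)^-$ together with the inactive left siblings of $I_i$ within $I_i^*$ to form a $\mu$-null interval of length at least $K\ell$ immediately to the left of $I_i$.

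The plan is to pair an upper bound on $\mu(\pi(\ldots))$ with a matching lower bound at the parent scale $K\ell$. By upper regularity,
\[
\mu\bigl(\pi(X^2 \cap S_z \cap (J_1\times J_2))\bigr) \le \mu(J_1) \le C L_1^\delta \le CK^{\delta/2}\ell^\delta.
\]
For the lower bound, I pick any $x \in X \cap I_1$ and apply the regularity lower bound to $B(x,K\ell)$, obtaining $\mu(B(x,K\ell)) \ge C^{-1}(K\ell)^\delta$. Combined with the $\mu$-nullity of $(I_1^*)^-$ and the inclusion $B(x,K\ell)\subset (I_1^*)^-\cup I_1^*\cup (I_1^*)^+$, this forces $\mu(I_1^* \cup (I_1^*)^+) \gtrsim C^{-1} K^\delta \ell^\delta$. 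The target factor $K^{-\delta/2}$ is precisely the ratio between the upper bound $CK^{\delta/2}\ell^\delta$ on the projection and the lower bound $C^{-1}K^\delta\ell^\delta$ at the parent scale.

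I would then define $Y_{I_1,I_2}$ to be an appropriately chosen subset of $I_1^* \cup (I_1^*)^+$ (or its analogue around $I_2$) of $\mu$-mass at least $C^{-1}K^\delta\ell^\delta/2$, with the precise selection tuned to ensure disjointness as discussed below. Granted such a choice, the inequality \eqref{yip} reduces to a direct arithmetic check,
\[
\mu(\pi(\ldots)) \le CK^{\delta/2}\ell^\delta = C^2 K^{-\delta/2}\bigl(C^{-1}K^\delta\ell^\delta\bigr) \lesssim C^2 K^{-\delta/2}\,\mu(Y_{I_1,I_2}).
\]

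The main obstacle, and the most delicate part of the argument, is arranging the $Y_{I_1,I_2}$ to be pairwise disjoint across all pairs (including pairs at different scales). Within any single scale, distinct left edges $I_1$ have distinct parent intervals separated by at least $K\ell$ on the left, so the parent regions $I_1^*\cup(I_1^*)^+$ are essentially disjoint, and the $I_2$-dependence of $Y_{I_1,I_2}$ can be used to break any residual overlap between two adjacent parents. Across scales, however, a smaller left edge $I_1'\subset I_1$ has $(I_1')^*\subset I_1^*$, so the parent regions nest, and a naive parent-region assignment fails. I would resolve this by attaching to each pair $(I_1,I_2)$ a ``signature'' sub-region of its parent region whose address within the parent is determined jointly by $I_1$ and $I_2$ in a scale-sensitive way (for example, a specific $K$-adic sub-interval of $I_1^*$ indexed by the position of $I_2$ in a canonical enumeration of same-scale left edges), so that the signature at a finer scale is never contained in a signature at a coarser scale. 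Verifying that the refined $Y_{I_1,I_2}$ still carries the requisite mass $\gtrsim C^{-1}K^\delta\ell^\delta$ is the core technical step, and should follow from a careful budgeting argument showing that coarser pairs can only claim a controlled fraction of the parent-scale mass at each finer scale.
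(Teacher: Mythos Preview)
Your treatment of the inequality \eqref{yip} itself is essentially the same as the paper's: bound the projection by $\mu(J_1)\le C(\sqrt{K}\ell)^\delta$ from above, and compare with a lower bound of order $C^{-1}(K\ell)^\delta$ coming from regularity at the parent scale. That part is fine.

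The genuine gap is in the disjointness, and your proposed ``signature/budgeting'' mechanism is not the idea that makes the lemma work. You use the left-edge property of $I_2$ only as a combinatorial label, and you propose to carve disjoint pieces out of $I_1^*\cup(I_1^*)^+$ by pure bookkeeping. But each $Y_{I_1,I_2}$ must carry mass $\gtrsim C^{-1}(K\ell)^\delta$, which is already a fixed fraction of the total $\mu$-mass of $I_1^*\cup(I_1^*)^+$; there is no slack to ``budget away'' overlaps with coarser scales, and summing the demands over all scales $n\le N$ overruns the available mass. Your closing sentence correctly flags this as the ``core technical step,'' but the step does not go through as described.

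The paper's definition is concrete and uses the left-edge property of $I_2$ in an essential, geometric way. One takes
\[
Y_{I_1,I_2}:=\Bigl[x_1+100\sqrt{K}\,\ell,\ x_1+\tfrac{1}{100}K\ell\Bigr],
\]
where $I_i=[x_i,x_i+\ell)$. Because $I_2$ is a left edge, $X$ has a gap $[x_2-K\ell,x_2)$; pushed through the relation $|z-x_1-x_2|\lesssim\sqrt{K}\,\ell$, this forces $Z=\pi(X^2\cap S_z)$ itself to avoid an interval of length $\sim K\ell$ immediately to the right of $x_1$. Consequently every $y\in Y_{I_1,I_2}$ satisfies
\[
10\,K^{1/2}\ell \ \le\ \mathrm{dist}(y,Z)\ \le\ \tfrac{1}{10}K\ell,
\]
and since the ranges $[10K^{-2n+1/2},\tfrac{1}{10}K^{-2n+1}]$ are disjoint as $n$ varies, cross-scale disjointness is automatic. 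Within a single scale, the $K\ell$ separation between distinct left edges $I_1$ and the constraint \eqref{zoo} (which pins $I_2$ once $I_1$ and $z$ are fixed) finish the job. The missing idea in your proposal is precisely this: the left-edge structure of $I_2$ produces a gap in $Z$, and the distance from $Y_{I_1,I_2}$ to $Z$ encodes the scale.
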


The key points here are the gain of $K^{-\delta/2}$ on the right-hand side, and the disjointness of the sets $Y_{I_1,I_2}$.

\begin{figure} [t]
\centering
\includegraphics[width=4in]{./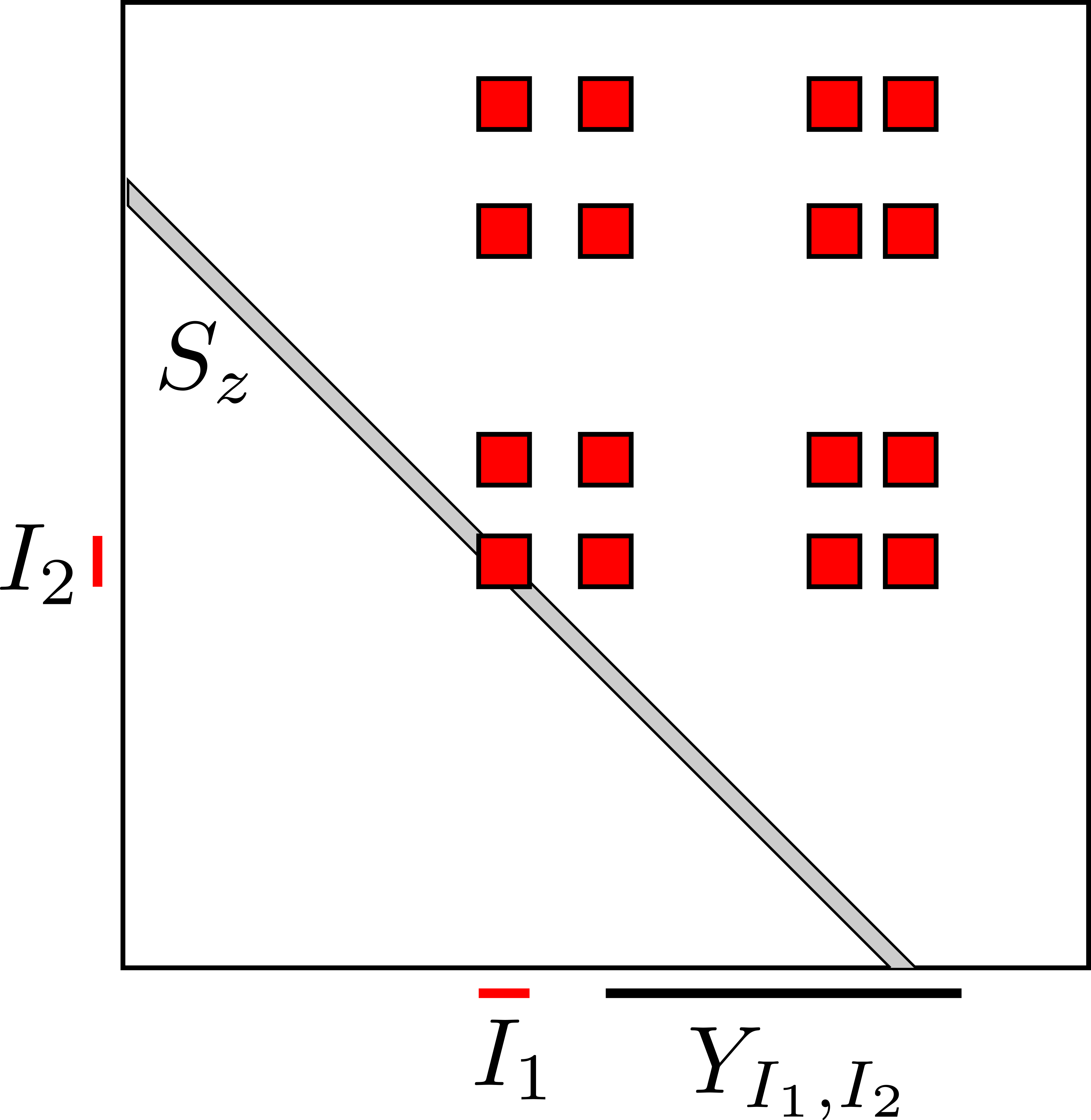}
\caption{A typical situation that arises in the proof of Lemma \ref{muz-piece}.  To reduce clutter we take $I_1=I'_1$ and $I_2=I'_2$.  The set $X^2$ (which supports the measure $\mu^2$) is contained in the squares formed by products of active intervals, such as $I_1 \times I_2$ (which is a typical component of the complement of $E$).  Because $I_1,I_2$ are left-edges, if $S_z$ intersects $I'_1 \times I'_2 = I_1 \times I_2$, then the set $\pi(X^2 \cap S_z \cap (I'_1 \times I'_2))$, being contained in $I_1 = I'_1$, will be much smaller than the nearby $Y_{I_1,I_2}$ as measured using the regular measure $\mu$; on the other hand, $Y_{I_1,I_2}$ stays at a medium distance from the set $Z = \pi(X^2 \cap S_z \backslash E)$ and from $I_1$, which will ensure that the $Y_{I_1,I_2}$ are disjoint as $I_1,I_2$ vary.  Note how this picture resembles the Cartesian product of the two portions of the real line depicted in Figure \ref{fig:leftedge}.}
\label{fig:gain}
\end{figure}

\begin{proof}  Write $I_1 = [x_1, x_1 + K^{-2n})$ and $I_2 = [x_2, x_2 + K^{-2n}]$ for some $K^{-2n} \in [r_0,1]$.  We can assume that the set $X^2 \cap S_z \cap (\bigcup_{I'_1} I'_1 \times \bigcup_{I'_2} I'_2)$ is non-empty, otherwise we can simply set $Y_{I_1,I_2}$ to be the empty set.  From Lemma \ref{poros}, the intervals $I'_i$ lie within $2 \sqrt{K} K^{-2n}$ of $x_i$ for $i=1,2$, hence by definition of $S_z$ we have
\begin{equation}\label{zoo}
|z - x_1 - x_2| \leq 5 \sqrt{K} K^{-2n}.
\end{equation}
also we see that from the definition of $Z$ that $Z$ contains a point within $2 \sqrt{K} K^{-2n}$  of $x_1$, thus
\begin{equation}\label{disp}
\mathrm{dist}(x_1,Z) \leq 2\sqrt{K} K^{-2n}.
\end{equation}
We define $Y_{I_1,I_2}$ to be the interval
\begin{equation}\label{y-def}
 Y_{I_1,I_2} \coloneqq \left[x_1 + 100 \sqrt{K} K^{-2n}, x_1 + \frac{1}{100} K^{-2n+1}\right];
\end{equation}
see Figure \ref{fig:gain}.  Clearly $Y_{I_1,I_2} \subset [-2,2]$.  We now verify \eqref{yip}.
We can use regularity to bound
\begin{align*}
\mu\left( \pi\left( S_z \cap \left(\bigcup_{I'_1} I'_1 \times \bigcup_{I'_2} I'_2\right) \right) \right) 
&\leq \mu\left( \bigcup_{I'_1} I'_1 \right)\\
&\lesssim C (\sqrt{K} K^{-2n})^\delta.
\end{align*}
Next, as $I_1$ is a left edge, it contains an element $x_*$ of $X$, and
$$ \mu([x_1 - K^{-2n+1}, x_1)) = 0$$
and hence
$$ \mu(Y_{I_1,I_2}) \geq \mu\left( B(x_*,\frac{1}{200} K^{-2n+1}) \backslash B(x_*,200 \sqrt{K} K^{-2n}) \right).$$
From regularity we have
$$\mu\left( B(x_*,\frac{1}{200} K^{-2n+1}) \right) \gtrsim C^{-1} K^{(-2n+1)\delta}$$
and
$$ \mu\left( B(x_*,200 \sqrt{K} K^{-2n}) \right) \lesssim C K^{(-2n+1/2)\delta}$$
hence by \eqref{k1d} we have (for $C_0$ large enough) that
$$ \mu(Y_{I_1,I_2}) \gtrsim C^{-1} K^{(-2n+1)\delta}$$
and \eqref{yip} follows.

It remains to establish the disjointness of the $Y_{I_1,I_2}$.  To do this we study the relative position of $Y_{I_1,I_2}$ and $Z$.  From \eqref{disp} one has
\begin{equation}\label{y1}
\mathrm{dist}(y,Z) \leq \frac{1}{10} K^{-2n+1}
\end{equation}
for all $y \in Y_{I_1,I_2}$.  Now we use the fact that $I_2$ is a left-edge, thus
$$ X \cap [x_2 - K^{-2n+1}, x_2) = \emptyset.$$
By definition of $S_z$, this implies that the set $Z \subset \pi( X^2 \cap S_z )$ avoids the interval $(z - x_2+r_0, z-x_2+K^{-2n+1}-r_0)$.  Using \eqref{zoo}, we conclude that $Z$ avoids the interval
$$  \left[x_1 + 10 \sqrt{K} K^{-2n}, x_1 + \frac{1}{2} K^{-2n+1}\right]$$
(say), and thus we have
\begin{equation}\label{y2}
\mathrm{dist}(y,Z) \geq 10 K^{-2n+1/2}
\end{equation}
for all $y \in Y_{I_1,I_2}$.

The bounds \eqref{y1}, \eqref{y2} imply that two intervals $Y_{I_1,I_2}, Y_{I'_1,I'_2}$ cannot overlap if the lengths $|I_1|=|I_2|=K^{-2n}$, $|I'_1|=|I'_2|=K^{-2n'}$ are distinct.  Thus it remains to show that the intervals $Y_{I_1,I_2}, Y_{I'_1,I'_2}$ cannot overlap in the case of all equal lengths $|I_1|=|I_2|=|I'_1|=|I'_2|=K^{-2n}$.  Recall that if the left-edges $I_1,I'_1$ are distinct, they are separated from each other by at least $K^{-2n+1}$.  Comparing this with \eqref{y-def} we see that one can only have $Y_{I_1,I_2}, Y_{I'_1,I'_2}$ intersect if $I_1=I'_1$.  But then from \eqref{zoo} we see that left-edges $I_2, I'_2$ are separated by at most $10 \sqrt{K} K^{-2n}$, and hence must also be equal from the separation property of left-edges.  Thus we see that the $Y_{I_1,I_2}$ are indeed disjoint as $(I_1,I_2)$ vary.
\end{proof}

Summing the above lemma over all pairs $(I_1,I_2)$ we conclude that
$$ \mu(Z) \lesssim C^2 K^{-\delta/2} \mu([-2,2]) \lesssim C^3 K^{-\delta/2}$$
and hence by Corollary \ref{conc} we have
$$ \Energy(\mu|_{[-1,1]}, r_0) \lesssim C^6 r_0^\delta \left( r_0^{\frac{c'}{C^4 K^{4\delta} \log K}} + K^{-\delta/2} \right).$$
If we now set
$$ K \coloneqq (C_1 C^6/\eps)^{\max(\frac{2}{\delta}, \frac{2}{1-\delta})}$$
for a sufficiently large absolute constant $C_1$, the the condition \eqref{k1d} will be satisfied, and
$$ \Energy(\mu|_{[-1,1]}, r_0) \leq r_0^\delta \left(\frac{\eps}{2} + O\left( C^6 r_0^{\frac{c'' \eps^2 \min(\delta,1-\delta)}{C^{16} \log(C/\eps)}} \right) \right)$$
for some absolute constant $c>0$.  Thus if we select
$$ r_0 \coloneqq \exp\left( - C_2 \frac{C^{16} \log^2(C/\eps)}{\eps^2 \min(\delta,1-\delta)} \right)$$
for a sufficiently large absolute constant $C_2$, we obtain
$$ \Energy(\mu|_{[-1,1]}, r_0) \leq \eps r_0^\delta$$
as required.  This concludes the proof of Proposition \ref{slight-gain-1}.

\section{The one-dimensional case: induction on scales}\label{induct-sec}

In this section we show how one can iterate Proposition \ref{slight-gain-1} to obtain Theorem \ref{main}.

Let $\delta, C, \alpha, X, \mu_X$ be as in Theorem \ref{main}.  Let $\eps>0$ be a small parameter to be chosen later, and let $r_0$ be the quantity in Proposition \ref{slight-gain-1}.

It is convenient to adopt the notation of Gowers uniformity norms \cite{gowers}.  For $f_1,f_2,f_3,f_4 \in L^{4/3}(\R)$, define the Gowers inner product
$$ \langle f_1,f_2,f_3,f_4 \rangle_{U^2(\R)} \coloneqq \int_\R \int_\R \int_\R f_1(x) \overline{f_2}(x+h) \overline{f_3}(x+k) f_4(x+h+k)\ dx dh dk$$
and the Gowers uniformity norm
$$ \|f\|_{U^2(\R)} \coloneqq \langle f,f,f,f \rangle^{1/4}$$
for any $f \in L^{4/3}(\R)$.  One can also write the $U^2$ norm in terms of the Fourier transform as
\begin{equation}\label{u24}
\|f\|_{U^2(\R)} = \| \hat f \|_{L^4(\R)}
\end{equation}
so it is clear that the $U^2(\R)$ norm is indeed a norm.  We also recall the well-known \emph{Gowers--Cauchy--Schwarz inequality}
\begin{equation}\label{gcz-1}
|\langle f_1,f_2,f_3,f_4 \rangle_{U^2(\R)}| \leq \|f_1\|_{U^2(\R)} \|f_2\|_{U^2(\R)} \|f_3\|_{U^2(\R)} \|f_4\|_{U^2(\R)}
\end{equation}

We can relate the Gowers norms to additive energy as follows:

\begin{lemma}[Additive energy and Gowers norms]\label{agn}  If $\mu$ is a finite Radon measure on $\R$ and $r>0$, then
\begin{equation}\label{emr}
 \Energy(\mu,r) \sim r^{-3} \| \mu * 1_{[-r,r]} \|_{U^2}^4
 \end{equation}
 (compare with Lemma \ref{mor}).
Also, for any $\lambda>0$ one has
\begin{equation}\label{lamr}
\| \mu * 1_{[-\lambda r,\lambda r]} \|_{U^2} \sim_\lambda 
 \| \mu * 1_{[-r,r]} \|_{U^2}.
 \end{equation}
 \end{lemma}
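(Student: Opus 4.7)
The plan is to prove \eqref{emr} by expanding the Gowers norm via Fubini--Tonelli into an integral against $\mu^4$, then supplement this with a Fourier-based scale stability of $\Energy$; the statement \eqref{lamr} will then fall out of \eqref{emr}.

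Set $g := \mu * 1_{[-r,r]}$. Substituting $g(x)=\int 1_{[-r,r]}(x-t)\,d\mu(t)$ into the definition $\|g\|_{U^2}^4=\iiint g(x)g(x+h)g(x+k)g(x+h+k)\,dx\,dh\,dk$, swapping the order of integration, and making a unit-Jacobian linear change of variables that turns the four indicator arguments into new integration variables, one obtains the identity
\[ \|g\|_{U^2}^4 \;=\; \int_{\R^4} K(t_1+t_4-t_2-t_3)\,d\mu^4(t_1,t_2,t_3,t_4), \qquad K := 1_{[-r,r]}^{*4}. \]
The kernel $K$ is a symmetric nonnegative spline supported in $[-4r,4r]$ with $K(0)\sim r^3$, and by continuity one has the envelope
\[ c r^3\cdot 1_{[-r,r]}(s)\;\le\;K(s)\;\le\;C r^3\cdot 1_{[-4r,4r]}(s) \]
for absolute constants $c,C>0$. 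Integrating this envelope against $\mu^4$ gives
\[ c r^3\,\Energy(\mu,r)\;\le\;\|g\|_{U^2}^4\;\le\;C r^3\,\Energy(\mu,4r). \]

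The lower bound is already the $\gtrsim$ half of \eqref{emr}. For the $\lesssim$ half I must upgrade the upper bound by establishing the scale-stability estimate $\Energy(\mu,Mr)\lesssim M\cdot\Energy(\mu,r)$ for $M\ge 1$. Set $\sigma := \mu*\check\mu$, so that $\Energy(\mu,\rho)=(\sigma*\sigma)([-\rho,\rho])$, and let $\triangle_{2\rho}(s):=(2\rho-|s|)_+$ denote the tent function. The function $\triangle_{2\rho}*(\sigma*\sigma)$ has Fourier transform $|\widehat{1_{[-\rho,\rho]}}|^2\cdot|\hat\mu|^4$, which is nonnegative and integrable; hence by Fourier inversion it is bounded, continuous, and attains its maximum at the origin. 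Combined with the sandwich $\rho\cdot 1_{[-\rho,\rho]}\le\triangle_{2\rho}\le 2\rho\cdot 1_{[-2\rho,2\rho]}$, this produces the shifted-energy bound
\[ (\sigma*\sigma)([u-\rho,u+\rho])\;\le\;2\,\Energy(\mu,2\rho)\qquad(u\in\R). \]
Covering $[-Mr,Mr]$ by $O(M)$ intervals of radius $r/2$ and applying the shifted bound with $\rho=r/2$ then yields $\Energy(\mu,Mr)\lesssim M\cdot\Energy(\mu,r)$. Specialising to $M=4$ completes the proof of \eqref{emr}.

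Finally, \eqref{lamr} follows by applying \eqref{emr} at both scales $r$ and $\lambda r$ and comparing: the ratio of fourth powers equals $\lambda^3\cdot\Energy(\mu,\lambda r)/\Energy(\mu,r)$, which is $\sim_\lambda 1$ by the scale-stability just established (applied in the direction $r\to\lambda r$ if $\lambda\ge 1$, and in the direction $\lambda r\to r$ if $\lambda\le 1$) together with the trivial monotonicity of $\Energy(\mu,\cdot)$. The only nontrivial ingredient in the whole argument is the Fourier step giving the shifted-energy bound; the remainder is bookkeeping.
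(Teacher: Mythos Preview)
Your argument is correct, but the logical order is reversed from the paper's, and your route to the upper bound in \eqref{emr} is more elaborate than necessary.

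The paper proves \eqref{lamr} \emph{first}, by the one-line observation that $1_{[-\lambda r,\lambda r]}$ is pointwise dominated by a sum of $O_\lambda(1)$ translates of $1_{[-r,r]}$ (and conversely), so the triangle inequality and translation invariance of $U^2$ give \eqref{lamr} immediately with no Fourier input. For \eqref{emr}, the paper obtains the same lower bound as you do, but for the upper bound it simply uses the kernel estimate $1_{[-r/4,r/4]}^{*4}\lesssim r^3 1_{[-r,r]}$ to get $\|\mu*1_{[-r/4,r/4]}\|_{U^2}^4\lesssim r^3\,\Energy(\mu,r)$, and then invokes the already-proved \eqref{lamr} to pass from $r/4$ back to $r$.

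You instead close the upper bound in \eqref{emr} by proving the energy scale-stability $\Energy(\mu,Mr)\lesssim M\,\Energy(\mu,r)$ directly, via the Fourier positivity trick that $\triangle_{2\rho}*(\sigma*\sigma)$ has nonnegative transform and hence is maximized at the origin. This is a valid and pleasant argument, and it isolates the shifted-energy bound as a standalone fact; but it is strictly more work than the paper's covering-by-translates approach, which delivers the same scale comparison (indeed with the same linear dependence on $M$) without any Fourier analysis. In short: both proofs are sound, the paper's is shorter and more elementary, while yours extracts an explicit intermediate inequality that the paper leaves implicit.
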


\begin{proof}  The claim \eqref{lamr} is clear after observing that $1_{[-\lambda r, \lambda_r]}$ can be bounded by the sum of $O_\lambda(1)$ translates of $1_{[-r,r]}$ (and vice versa), together with the triangle inequality and translation invariance of Gowers norms.

By the Fubini--Tonelli theorem we have
$$
\| \mu * 1_{[-r,r]} \|_{U^2}^4
= \int_{\R^4} 1_{[-r,r]} * 1_{[-r,r]} * 1_{[-r,r]} * 1_{[-r,r]}(x_1+x_2-x_3-x_4)\ d\mu(x_1) \dots d\mu(x_4).$$
Since
$$ 1_{[-r,r]} * 1_{[-r,r]} * 1_{[-r,r]} * 1_{[-r,r]} \gtrsim r^3 1_{[-r,r]}$$
we obtain the bound
$$ \| \mu * 1_{[-r,r]} \|_{U^2}^4 \gtrsim r^3 \Energy(\mu,r).$$
Conversely, since
$$ 1_{[-r/4,r/4]} * 1_{[-r/4,r/4]} * 1_{[-r/4,r/4]} * 1_{[-r/4,r/4]} \lesssim r^3 1_{[-r,r]}$$
we have
$$ \| \mu * 1_{[-r/4,r/4]} \|_{U^2}^4 \lesssim r^3 \Energy(\mu,r)$$
and the claim \eqref{emr} now follows from \eqref{lamr}.
\end{proof}

The key claim is the following inequality relating the energy at different scales:

\begin{proposition}[Energy at nearby scales]\label{en-nearby}  If $\alpha \leq r \leq r_0$, one has
$$ \Energy(\mu,r) \lesssim C^4 \eps r_0^\delta \Energy(\mu,r/r_0)$$
\end{proposition}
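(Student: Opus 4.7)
The plan is to use the Gowers-norm representation from Lemma \ref{agn}, namely $\Energy(\mu,r) \sim r^{-3} \|\mu * 1_{[-r,r]}\|_{U^2}^4$, and to bound the right-hand side via a coarse partition of $\mu$ at the intermediate scale $s := r/r_0$. I would write $\mu = \sum_k \mu_k$ with each $\mu_k$ the restriction of $\mu$ to an interval $I_k$ of length $s$ that meets $X$, and set $f_k := \mu_k * 1_{[-r,r]}$; since $r \leq s$, each $f_k$ is supported in an $O(s)$-neighbourhood of $I_k$. Expanding by multilinearity of the Gowers inner product,
\[
\|\mu*1_{[-r,r]}\|_{U^2}^4 = \sum_{k_1,k_2,k_3,k_4} \langle f_{k_1},f_{k_2},f_{k_3},f_{k_4}\rangle_{U^2},
\]
and the supports constrain the nonzero terms to \emph{good} quadruples of active cells, namely those whose centres satisfy a coarse additive relation at scale $s$. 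Gowers--Cauchy--Schwarz \eqref{gcz-1} then bounds each such inner product by $\prod_i \|f_{k_i}\|_{U^2} \leq \max_k \|f_k\|_{U^2}^4$.

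The key per-cell slight gain comes from Proposition \ref{slight-gain-1}. By Lemma \ref{agn}, $\|f_k\|_{U^2}^4 \sim r^3 \Energy(\mu_k, r)$. Rescaling $I_k$ to a unit interval produces a measure $\hat\mu_k$ on $[-1/2, 1/2]$ which (up to boundary adjustments) is $\delta$-regular on $[r_0, 1]$ with constant $\sim C$, since $r/s = r_0$ and $\alpha/s \leq r_0$. Proposition \ref{slight-gain-1} applied to $\hat\mu_k$ then gives $\Energy(\hat\mu_k, r_0) \leq \eps r_0^\delta$, and undoing the rescaling yields $\Energy(\mu_k, r) = s^{4\delta} \Energy(\hat\mu_k, r_0) \leq \eps s^{4\delta} r_0^\delta$. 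Hence $\max_k \|f_k\|_{U^2}^4 \lesssim \eps r^3 s^{4\delta} r_0^\delta$.

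To count good quadruples, note that after permuting indices to match the $\Energy$ convention, each good quadruple of active cells gives $|x_1+x_2-x_3-x_4| \lesssim s$ for all representatives $x_i \in I_{k_i}$, so it contributes at least $\prod_i \mu(I_{k_i}) \geq C^{-4} s^{4\delta}$ to $\Energy(\mu, O(s))$ by lower regularity. There are therefore at most $O(C^4 \Energy(\mu, r/r_0)/s^{4\delta})$ good quadruples. Combining these bounds,
\[
\|\mu * 1_{[-r,r]}\|_{U^2}^4 \lesssim \frac{C^4 \Energy(\mu, r/r_0)}{s^{4\delta}} \cdot \eps r^3 s^{4\delta} r_0^\delta = C^4 \eps\, r^3 r_0^\delta \Energy(\mu, r/r_0),
\]
and dividing by $r^3$ via Lemma \ref{agn} yields the required bound $\Energy(\mu,r) \lesssim C^4 \eps r_0^\delta \Energy(\mu, r/r_0)$.

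The main obstacle is the boundary behaviour of $\hat\mu_k$: the restriction $\mu_k$ may fail the lower regularity bound at scales approaching $s$, because balls around points of $X \cap I_k$ lying near the endpoints of $I_k$ can escape the cell, so strictly speaking $\hat\mu_k$ need not satisfy all hypotheses of Proposition \ref{slight-gain-1}. I expect this to be a routine technical issue: restricting to cells with an interior buffer for $X$, or replacing the hard partition with a smooth partition of unity at scale $s$, should cost only absolute constant factors and preserve the displayed $C^4$ dependence.
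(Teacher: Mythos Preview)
Your approach matches the paper's almost step for step: partition at scale $s=r/r_0$, expand by multilinearity, apply Gowers--Cauchy--Schwarz to each surviving quadruple, invoke Proposition~\ref{slight-gain-1} after rescaling for the per-cell gain, and control the sum over good quadruples via lower regularity and $\Energy(\mu,O(r/r_0))$.

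The boundary obstacle you flag at the end is real but has a cleaner fix than smooth partitions or buffered cells. The paper rescales the \emph{full} measure $\mu$ (not the restriction $\mu|_{I_k}$) by the affine map $T_k$ sending $I_k$ to $[-1,1]$; then $(r_0/r)^\delta (T_k)_*\mu$ is genuinely $\delta$-regular on scales $[r_0,1]$ with the original constant $C$, with no boundary loss, and one simply observes that Proposition~\ref{slight-gain-1} already restricts to $[-1,1]$ in its \emph{conclusion} $\Energy(\mu|_{[-1,1]},r_0)\leq\eps r_0^\delta$, which after undoing the scaling is exactly the bound $\Energy(\mu|_{I_k},r)\leq \eps s^{4\delta} r_0^\delta$ you want. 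The same boundary issue also affects your lower bound $\mu(I_{k_i})\geq C^{-1}s^\delta$ in the quadruple count (an active cell may meet $X$ only near an endpoint); the paper handles this by using the tripled interval $2I_i-I_i\supset B(x,s)$ for $x\in I_i\cap X$ in place of $I_i$, which costs only bounded overlap in the final Fubini--Tonelli count.
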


\begin{proof}  By Lemma \ref{agn} one has
$$ \Energy(\mu,r) \sim r^{-3} \| \mu * 1_{[-r,r]} \|_{U^2(\R)}^4.$$
Partition $\R$ into a collection ${\mathcal I}$ of half-open intervals $I$ of length $r/r_0$, then we can decompose
$$ \mu = \sum_I \mu|_I$$
where $\mu_I$ denotes the restriction of $\mu$ to $I$, and thus
$$ \Energy(\mu,r) \sim \sum_{I_1,I_2,I_3,I_4} 
r^{-3} \langle \mu|_{I_1} * 1_{[-r,r]}, \dots, \mu|_{I_4} * 1_{[-r,r]} \rangle_{U^2(\R)}$$
where $I_1,\dots,I_4$ are understood to vary over ${\mathcal I}$.  The integral vanishes unless $I_1-I_2-I_3+I_4$ intersects $[-4r,4r]$ and all of the $\mu|_{I_i}$ are non-vanishing, which by Definition \ref{reg-set}(iii) implies that
$$ \mu(2I_i-I_i) \gtrsim C^{-1} (r/r_0)^\delta,$$
where $2I_i-I_i = I_i + [-r/r_0,r/r_0]$ is the triple of $I_i$.  When this occurs, we can use the Gowers--Cauchy--Schwarz inequality to estimate
$$
\langle \mu|_{I_1} * 1_{[-r,r]}, \dots, \mu|_{I_4} * 1_{[-r,r]} \rangle_{U^2(\R)}
\leq \prod_{i=1}^4 \| \mu|_{I_i} * 1_{[-r,r]}\|_{U^2(\R)}$$
and hence by Lemma \ref{agn} again
$$
r^{-3} \langle \mu|_{I_1} * 1_{[-r,r]}, \dots, \mu|_{I_4} * 1_{[-r,r]} \rangle_{U^2(\R)}
\lesssim \prod_{i=1}^4 \Energy(\mu|_{I_i}, r)^{1/4}.$$

For each $i$, let $T_i \colon \R \to \R$ be the affine (order-preserving) map that sends $I_i$ to $[-1,1]$.  Direct calculations shows that the pushforward measure $(r_0/r)^\delta T_* \mu$ is $\delta$-regular at scales $[\alpha r_0/r, r_0/r]$, and in particular at scales $[r_0,1]$.  Applying Proposition \ref{slight-gain-1} to this measure and undoing the rescaling, we see after a routine calculation that
$$ \Energy(\mu|_{I_i}, r_0) \leq \eps (r/r_0)^{4\delta} r_0^\delta \lesssim C^4 \eps r_0^\delta \mu(2I_i-I_i)^4.$$
Putting all this together, we see that
$$ \Energy(\mu,r) \lesssim C^4 \eps r_0^\delta
 \sum_{I_1,I_2,I_3,I_4: I_1-I_2-I_3+I_4 \cap [-4r,4r] \neq \emptyset} \prod_{i=1}^4 \mu(2I_i-I_i).$$
By the Fubini--Tonelli theorem, we can write the right-hand side as
$$
C^4 \eps r_0^\delta \int_{\R^4}
 \sum_{I_1,I_2,I_3,I_4: I_1-I_2-I_3+I_4 \cap [-4r,4r] \neq \emptyset} \prod_{i=1}^4 1_{2I_i-I_i}(x_i)\ d\mu(x_1) \dots d\mu(x_4).$$
The integrand can be computed to equal $O(1)$, and vanishes unless
$$ |x_1 - x_2 - x_3 + x_4| \leq 100 r/r_0$$
(say).  Thus we conclude that
$$ \Energy(\mu,r) \lesssim C^4 \eps r_0^\delta \Energy(\mu,100r/r_0)$$
and the claim now follows by using Lemma \ref{agn} to remove the factor of $100$.
\end{proof}

If we now set $\eps$ to be a small multiple of $1/C^4$, we can ensure that
$$ \Energy(\mu,r) \leq \frac{1}{e} r_0^\delta \Energy(\mu,r/r_0)$$
and hence upon induction (starting with the base case $r_0 \leq r \leq 1$) one has
$$ \Energy(\mu,r) \lesssim (r/r_0)^{\delta + \frac{1}{\log r_0}}  \Energy(\mu,1)$$
for all $\alpha \leq r \leq 1$.  Substituting the specific value \eqref{r0-def} corresponding to the indicated choice of $\eps$, we obtain the claim.

\section{Gowers norms and approximate groups}\label{add-comb}

In the previous section we connected the additive energy $\Energy(\mu,r)$ to the Gowers uniformity norms used in additive combinatorics.  We now develop the theory of these norms (and the related notion of an \emph{approximate group}) in more detail, as we shall rely heavily on these tools for the higher-dimensional argument.  As we will be working in the continuous setting of Euclidean spaces $\R^d$ rather than in the discrete settings that are more traditional in additive combinatorics, we shall phrase these concepts in the general setting of locally compact abelian groups. In this section we will lay out the basic theory of these concepts; most of the material is standard, except for a key ``scale splitting'' estimate which will underlie various manifestations of an ``induction on scales'' strategy; see Lemma \ref{split} and Lemma \ref{relate}(iii).

\begin{definition}[LCA groups]\label{lca-group} An \emph{LCA group} is a locally compact abelian group $V = (V,+)$ equipped with a Haar measure $m_V$, and the Borel sigma algebra.  For any $1 \leq p \leq \infty$, we define the usual Banach spaces $L^p(V)$ of $p^{\mathrm{th}}$ power integrable functions $f \colon V \to \C$, quotiented out by almost everywhere equivalence. We let $L^p(V)_+$ denote the subset of $L^p(V)$ consisting of functions that are non-negative (almost everywhere).  For any positive measure subset $X$ of $V$, we define $L^p(X)$ and $L^p(X)_+$ similarly.
\end{definition}

In this paper we will mostly be concerned with the case when $V$ is a Euclidean space $\R^d$ equipped with Lebesgue measure, though we will occasionally also need to work with the lattice $\Z^d$ with counting measure, or hyperplanes $v^\perp \coloneqq \{ x \in \R^d: x \cdot v = 0 \}$ equipped with Lebesgue measure.

\begin{definition}[Gowers uniformity norm]\cite[Definition 1.1]{eisner}  Let $V$ be an LCA group.  If $f_1,f_2,f_3,f_4 \in L^{4/3}(V)$, we define the Gowers inner product
$$ \langle f_1,f_2,f_3,f_4 \rangle_{U^2(V)} \coloneqq \int_V \int_V \int_V f_1(x) \overline{f_2}(x+h) \overline{f_3}(x+k) f_4(x+h+k)\ dm_V(x) dm_V(h) dm_V(k)$$
(these integrals can be shown to be absolutely convergent by the H\"older and Young inequalities, or by interpolation) and for $f \in L^{4/3}(V)$ we define the uniformity norm $\|f\|_{U^2(V)} = \|f\|_{U^2(V,)}$ by the formula
$$ \|f\|_{U^2(V)} \coloneqq \langle f,f,f,f \rangle_{U^2(V)}^{1/4}.$$
If $\mu$ is an absolutely continuous Radon measure on $V$ whose Radon-Nikodym derivative $\frac{d\mu}{dm_V}$ is finite, we define (by abuse of notation)
$$ \| \mu \|_{U^2(V)} \coloneqq \left\| \frac{d\mu}{dm_V} \right\|_{U^2(V)}.$$
Finally, if $X$ is a positive measure subset of $V$ and $f \in L^{4/3}(X)$, we define
$$ \|f \|_{U^2(X)} \coloneqq \|f 1_X \|_{U^2(V)}.$$
\end{definition}

Although we define the Gowers norms here for $f \in L^{4/3}(V)$, for our applications it would suffice to restrict attention to the non-negative functions $f \in L^{4/3}(V)_+$.

As noted in \cite{eisner}, the norms $\|\|_{U^2(V)}$ are indeed norms; in fact one has an explicit representation
\begin{equation}\label{u24-v}
\|f\|_{U^2(V)} = \|f*f\|_{L^2(V)}^{1/2} = \| \hat f \|_{L^4(\hat V)}
\end{equation}
in terms of the $L^4$ norm of the Fourier transform $\hat f \colon \hat V \to \C$ on the Pontragyin dual $\hat V$ of $V$ (equipped with the dual Haar measure $m_{\hat V}$), where we use the usual convolution operation
$$ f*g(x) \coloneqq \int_V f(x-y) g(y)\ dm_V(y).$$
Similarly one has
$$ \langle f_1,f_2,f_3,f_4 \rangle_{U^2(V)} = \int_{\hat V}
\hat f_1(\xi) \overline{\hat f_2(\xi)} \overline{\hat f_3(\xi)} \hat f_4(\xi)\ dm_{\hat V}(\xi).$$
From Young's inequality or the Hausdorff-Young inequality one then has the bound
\begin{equation}\label{young}
\|f\|_{U^2(V)} \leq \|f\|_{L^{4/3}(V)}
\end{equation}
so in particular by H\"older's inequality
\begin{equation}\label{triv-u2}
 \|f\|_{U^2(V)} \leq \|f\|_{L^1(V)}^{3/4} \|f\|_{L^\infty(V)}^{1/4}
 \end{equation}
(compare with \eqref{triv-bound}).  From another application of H\"older (or Cauchy-Schwarz) we conclude the \emph{Gowers--Cauchy--Schwarz inequality}
\begin{equation}\label{gcz}
|\langle f_1,f_2,f_3,f_4\rangle_{U^2(V)}| \leq \|f_1\|_{U^2(V)} \|f_2\|_{U^2(V)} \|f_3\|_{U^2(V)} \|f_4\|_{U^2(V)}.
\end{equation}
For functions $f \in L^{4/3}(X)$ supported on a set $X$ of positive finite measure, we also have from Cauchy--Schwarz that
$$ \|f\|_{U^2(X)} = \|f*f \|_{L^2(2X)}^{1/2} \geq m_V(2X)^{-1/4} \|f*f \|_{L^1(2X)}^{1/2}$$
and hence
\begin{equation}\label{flower}
\|f\|_{U^2(X)} \geq \mu_V(2X)^{-1/4} \| f \|_{L^1(X)}.
\end{equation}

In view of \eqref{young}, one can think of the $U^2(V)$ norm of a function $f$ as the $L^{4/3}(V)$ norm multiplied by a dimensionless (scale-invariant) quantity that informally measures the amount of ``additive structure'' present on $V$.

The Gowers norm is clearly invariant under changes of variable by measure-preserving affine homomorphisms.  In particular it is translation-invariant, hence by Minkowski's inequality one has
$$ \|f*g\|_{U^2(V)} \leq \|f\|_{U^2(V)} \|g\|_{L^1(V)}$$
for all $f \in L^{4/3}(V)$ and $g \in L^1(V)$.  In a similar spirit, one has
\begin{equation}\label{fmuv}
 \|f*\mu\|_{U^2(V)} \leq \|f\|_{U^2(V)} 
 \end{equation}
for any $f \in L^{4/3}(V)$ and any Radon probability measure $\mu$, where the convolution is now defined as
$$ f*\mu(x) \coloneqq \int_V f(x-y)\ d\mu(y).$$

As mentioned previously, we will primarily be concerned with the Gowers norm on the non-negative cone $L^{4/3}(V)_+$ of $L^{4/3}(V)$.  Clearly the Gowers norm is monotone on this cone in the sense that
\begin{equation}\label{monotone}
 \|f\|_{U^2(V)} \leq \|g\|_{U^2(V)}
\end{equation}
whenever $f,g \in L^{4/3}(V)_+$ are such that $f \leq g$ pointwise almost everywhere.

If one has two functions $f \in L^{4/3}(V), f' \in L^{4/3}(V')$ on two LCA groups $V,V'$, then we can define the tensor product $f \otimes f' \in L^{4/3}(V \times V')$ on the product LCA group $V \times V'$ (equipped with product Haar measure) by the formula
$$ f \otimes f'(x,x') \coloneqq f(x) f'(x')$$
for $x \in V$, $x' \in V'$.  From the Fubini--Tonelli theorem we have the identity
\begin{equation}\label{fub-ton}
\| f \otimes f'\|_{U^2(V \times V')} = \|f\|_{U^2(V)} \|f'\|_{U^2(V')}.
\end{equation}
For functions that are not of tensor product form, we have the following ``splitting inequality'' that serves as a partial substitute for \eqref{fub-ton}:

\begin{lemma}[Splitting inequality]\label{split}  Let $V,V'$ be LCA groups.  If $f \in L^{4/3}(V \times V')$, then
\begin{equation}\label{uv}
 \|f\|_{U^2(V \times V')} \leq \|f_{V} \|_{U^2(V')}
 \end{equation}
where $f_{V} \in L^{4/3}(V')$ is the function
$$ f_{V}(v') \coloneqq \| f(\cdot,v') \|_{U^2(V)}$$
where $f(\cdot,v')$ is the function $v \mapsto f(v,v')$ (this is well-defined in $L^{4/3}(V)$ for almost every $v' \in V'$).
\end{lemma}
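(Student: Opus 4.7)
The plan is to expand $\|f\|_{U^2(V\times V')}^4$ using Fubini--Tonelli, recognize the inner integral as a Gowers inner product in $V$, and then invoke the Gowers--Cauchy--Schwarz inequality \eqref{gcz} to peel off a fourth power of $\|f_V\|_{U^2(V')}$.

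More precisely, I would write each variable in $V\times V'$ as a pair, e.g. $x=(u,u')$, $h=(p,p')$, $k=(q,q')$ with $u,p,q\in V$ and $u',p',q'\in V'$. Unfolding the definition,
\begin{equation*}
\|f\|_{U^2(V\times V')}^4
= \int_{(V')^3}\!\int_{V^3} f(u,u')\,\overline{f(u+p,u'+p')}\,\overline{f(u+q,u'+q')}\,f(u+p+q,u'+p'+q')\,dm_V^{3}\,dm_{V'}^{3}.
\end{equation*}
The inner triple integral over $V^3$ is, by definition, the Gowers inner product
\begin{equation*}
\langle f(\cdot,u'),\,f(\cdot,u'+p'),\,f(\cdot,u'+q'),\,f(\cdot,u'+p'+q')\rangle_{U^2(V)}.
\end{equation*}
Applying \eqref{gcz} to this inner product bounds its absolute value by
\begin{equation*}
\|f(\cdot,u')\|_{U^2(V)}\,\|f(\cdot,u'+p')\|_{U^2(V)}\,\|f(\cdot,u'+q')\|_{U^2(V)}\,\|f(\cdot,u'+p'+q')\|_{U^2(V)},
\end{equation*}
which is exactly $f_V(u')f_V(u'+p')f_V(u'+q')f_V(u'+p'+q')$.

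Integrating this pointwise bound in $u',p',q'$ over $V'$ yields the unsigned Gowers inner product $\langle f_V,f_V,f_V,f_V\rangle_{U^2(V')}=\|f_V\|_{U^2(V')}^4$, and taking fourth roots gives \eqref{uv}.

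The argument is essentially a two-step bookkeeping exercise, and there is no serious obstacle beyond justifying absolute convergence and the measurability of $u'\mapsto f_V(u')$ so that Fubini--Tonelli can be applied. Measurability of $f_V$ follows from writing $\|f(\cdot,u')\|_{U^2(V)}^4$ as an iterated integral of a jointly measurable function and applying Tonelli; finiteness of $f_V$ almost everywhere (and its membership in $L^{4/3}(V')$) follows from the hypothesis $f\in L^{4/3}(V\times V')$ via the inequality \eqref{young}, since $\|f_V\|_{L^{4/3}(V')}^{4/3}\le \int_{V'}\|f(\cdot,u')\|_{L^{4/3}(V)}^{4/3}\,dm_{V'}(u')=\|f\|_{L^{4/3}(V\times V')}^{4/3}$. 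This validates the use of Fubini throughout and completes the proof.
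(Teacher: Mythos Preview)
Your proof is correct and follows essentially the same approach as the paper: expand $\|f\|_{U^2(V\times V')}^4$ via Fubini--Tonelli, recognize the inner $V$-integral as the Gowers inner product $\langle f(\cdot,u'),f(\cdot,u'+p'),f(\cdot,u'+q'),f(\cdot,u'+p'+q')\rangle_{U^2(V)}$, apply the Gowers--Cauchy--Schwarz inequality \eqref{gcz}, and integrate the resulting product over $V'$ to obtain $\|f_V\|_{U^2(V')}^4$. Your additional remarks on measurability and the $L^{4/3}$ bound for $f_V$ make explicit what the paper only mentions in one line.
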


The right-hand side of \eqref{uv} can be thought of as an iterated norm $\|f\|_{U^2(V'; U^2(V))}$, so \eqref{uv} can be written as
$$ \|f\|_{U^2(V \times V')} \leq \|f \|_{U^2(V;U^2(V'))}$$
which can be compared with the Fubini--Tonelli identity
$$ \|f\|_{L^{4/3}(V \times V')} = \|f \|_{L^{4/3}(V;L^{4/3}(V'))}.$$
Thus one can view Lemma \ref{split} as an analogue of the Fubini--Tonelli theorem for the Gowers uniformity norm $U^2$.

\begin{proof}  The fact that $f_{V} \in L^{4/3}(V')$ follows from \eqref{young} and the Fubini--Tonelli theorem.  From another application of Fubini--Tonelli one has
\begin{align*}
\langle f,f,f,f \rangle_{U^2(V \times V')} 
&= \int_{V'} \int_{V'} \int_{V'} \langle f(\cdot,x'), f(\cdot,x'+h'), f(\cdot,x'+k'), f(\cdot,x'+h'+k') \rangle_{U^2(V)}\\
&\quad  dm_{V'}(x') dm_{V'}(h') dm_{V'}(k')
\end{align*}
and hence by the Gowers--Cauchy--Schwarz inequality \eqref{gcz}
\begin{align*} \langle f,f,f,f \rangle_{U^2(V \times V')} 
&\leq \int_{V'} \int_{V'} \int_{V'} f_V(x') f_V(x'+h') f_V(x'+k') f_V(x'+h'+k')\\
&\quad  dm_{V'}(x') dm_{V'}(h') dm_{V'}(k')
\end{align*}
or equivalently
$$ \|f\|_{U^2(V \times V')}^4 \leq \|f_V \|_{U^2(V')}^4$$
and the claim follows.
\end{proof}

We will be interested in \emph{inverse theorems} for the trivial inequality \eqref{triv-u2}, that is to say descriptions of those $f$ for which this inequality is close to sharp.  We now recall a key definition (see e.g., \cite[Definition 2.25]{tao-vu}:

\begin{definition}[Approximate group]  Let $V$ be an LCA group and $K \geq 1$.  A subset $H$ of $V$ is said to be a \emph{$K$-approximate group} if $H$ contains the origin, is symmetric (thus $-H=H$), and if $H+H$ can be covered by at most $K$ translates of $H$.  (In particular, this implies that $m_V(mH) \leq K^{m-1} m_V(H)$ for all natural numbers $m$.)  If $H$ has positive finite measure, we define the uniform probability measure $\nu_H$ on $H$ by the formula
$$ \nu_H(E) \coloneqq \frac{m_V(E \cap H)}{m_V(H)}$$
for all measurable $E \subset V$.  Similarly, if $H$ has finite cardinality, we define the uniform probability measure $\nu_H$ on $H$ by the formula
$$ \nu_H(E) \coloneqq \frac{\#(E \cap H)}{\# H}$$
(note that these two definitions are compatible with each other when $V$ is discrete).
\end{definition}

\begin{examples}  If $r>0$ and $d \geq 1$, then the ball $B(0,r) = B^d(0,r)$ is a $O(1)^d$-approximate group,  If $v \in V$ and $M$ is a natural number, then the arithmetic progression $\{mv: m=-M,\dots,M\}$ is an $O(1)$-approximate group.  If $H,H' \subset V$ are a $K$-approximate group and $K'$-approximate group respectively, then $H+H'$ is a $KK'$-approximate group.
\end{examples}

\begin{theorem}[Inverse theorem]\label{inverse}  Let $V$ be an LCA group.  Let $A,N > 0$ and $0 < \eps \leq 1/2$, and let $f \in L^{4/3}(X)_+$ for some compact subset $X$ of $V$ obeying the bounds
\begin{align}
\|f\|_{L^\infty(X)} &\leq A \label{f-infty}\\
\|f\|_{L^1(X)} &\leq AN \label{f-1}\\
\|f\|_{U^2(X)} &\geq \eps A N^{3/4}\label{f-u2}
\end{align}
(so in particular \eqref{triv-u2} is sharp up to a factor of $\eps$).
\begin{itemize}
\item[(i)]  (Truncating the small values of $f$) One has
$$ \|f 1_{f \geq \eps^4 A/16} \|_{U^2(X)} \geq \frac{\eps}{2} A N^{3/4}.$$
\item[(ii)]  (Approx. symmetry along an approx. subgroup) There exists a $\eps^{-O(1)}$-approximate group $H$ in $10X-10X$ of measure $m_V(H) = \eps^{O(1)} N$ such that
\begin{equation}\label{fnuh}
\|f * \nu_H\|_{U^2(11X-10X)} = \eps^{O(1)} A N^{3/4}.
\end{equation}
\end{itemize}
\end{theorem}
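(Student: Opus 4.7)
The plan is to prove part (i) by a direct truncation plus triangle-inequality argument, and then to bootstrap (i) into part (ii) via the standard Balog--Szemer\'edi--Gowers and Pl\"unnecke--Ruzsa machinery, finished off with a Bogolyubov-type Fourier estimate.

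For part (i), I would decompose $f = f 1_{f < \eps^4 A/16} + f 1_{f \geq \eps^4 A/16}$. The small part has $L^\infty$-norm at most $\eps^4 A/16$ and $L^1$-norm at most $AN$, so \eqref{triv-u2} gives
$\|f 1_{f < \eps^4 A/16}\|_{U^2(X)} \leq (AN)^{3/4} (\eps^4 A/16)^{1/4} \leq \tfrac{\eps}{2} A N^{3/4}$.
Subtracting from the hypothesis \eqref{f-u2} via the triangle inequality (valid since $U^2$ is a norm by \eqref{u24-v}) yields the conclusion of (i) with $N^{3/4}$ on the right-hand side; the $N^{1/4}$ in the statement appears to be a typo for $N^{3/4}$, consistent with the scaling of \eqref{f-u2} and \eqref{triv-u2}.

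For part (ii), set $E \coloneqq \{f \geq \eps^4 A/16\}$ and $g \coloneqq f 1_E$. The pointwise sandwich $(\eps^4 A/16) 1_E \leq g \leq A \cdot 1_E$ combined with monotonicity \eqref{monotone} upgrades the $U^2$-bound from (i) to $\|1_E\|_{U^2(X)} \gtrsim \eps N^{3/4}$, equivalently $\|1_E * 1_{-E}\|_{L^2}^2 \gtrsim \eps^4 N^3$. The $L^1$ bound \eqref{f-1} combined with $g \gtrsim \eps^4 A \cdot 1_E$ gives $m_V(E) \leq \eps^{-O(1)} N$, while \eqref{young} applied to $1_E$ gives $m_V(E) \gtrsim \eps^{O(1)} N$; hence $E$ has normalized additive energy $\gtrsim \eps^{O(1)}$. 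I would then invoke the continuous-LCA Balog--Szemer\'edi--Gowers theorem (see e.g., \cite{tao-vu}) to extract $E' \subset E$ with $m_V(E') \gtrsim \eps^{O(1)} m_V(E)$ and small doubling $m_V(E' + E') \leq \eps^{-O(1)} m_V(E')$. Setting $H \coloneqq E' - E'$, the Pl\"unnecke--Ruzsa inequality yields $m_V(H) = \eps^{O(1)} N$, Ruzsa's covering lemma makes $H$ an $\eps^{-O(1)}$-approximate group, and $H \subset X - X \subset 10X - 10X$.

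The hard part will be verifying \eqref{fnuh}. By \eqref{u24-v}, $\|f * \nu_H\|_{U^2}^4 = \int_{\hat V} |\hat f|^4 |\hat \nu_H|^4$, so it suffices to show that $|\hat \nu_H(\xi)| \geq \eps^{O(1)}$ on a set carrying almost all the $L^4$ mass of $\hat f$. This is a Bogolyubov-type fact: because $H = E' - E'$ with $E'$ of small doubling, $H$ contains a Bohr neighbourhood of $\Lambda \coloneqq \{\xi : |\hat 1_{E'}(\xi)| \gtrsim \eps^{O(1)} m_V(E')\}$, and on any such Bohr neighbourhood $|\hat \nu_H|$ stays near $1$. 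A dyadic decomposition of $|\hat f|$ (using $f \lesssim A \cdot 1_E$, the containment $E' \subset E$, and a small-value truncation analogous to (i)) will then transfer the $L^4$ concentration of $\hat f$ onto the corresponding Bohr neighbourhood with polynomial $\eps$-losses. Keeping all three Fourier steps --- the Bohr containment, the spectral transfer from $\hat 1_{E'}$ to $\hat f$, and the final $L^4$ integration --- polynomial in $\eps$ in the continuous LCA setting is the most delicate point, and in particular requires working in the torsion-free regime $V = \R^d$ to avoid the quasipolynomial losses of Sanders' Bogolyubov--Ruzsa theorem.
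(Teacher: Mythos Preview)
Your argument for part (i) is correct and coincides with the paper's; you are also right that the $N^{1/4}$ in the displayed conclusion is a typo for $N^{3/4}$.

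For part (ii), your route through Balog--Szemer\'edi--Gowers to a small-doubling set $E' \subset E$ and then $H \coloneqq E'-E'$ is fine (the paper quotes a form of BSG from \cite{tao-product} that directly outputs an approximate group $H$ together with a translate satisfying $m_V(E \cap (H+y)) = \eps^{O(1)} N$, but this is equivalent to your version up to $\eps^{O(1)}$ losses). The problem is your proposed verification of the lower bound in \eqref{fnuh}. The Fourier/Bogolyubov route has a real gap: the ``spectral transfer from $\hat 1_{E'}$ to $\hat f$'' is not justified, since nothing forces the $L^4$-mass of $\hat f$ to concentrate on the large spectrum of $\hat 1_{E'}$ (the inclusion $E' \subset E$ gives no such control on the Fourier side). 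Your own caveat that one must restrict to $V=\R^d$ to dodge quasipolynomial Bogolyubov losses is a symptom of this, and is in any case inadmissible since the theorem is stated for general LCA groups.

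The paper avoids Fourier analysis here entirely. From $f \geq (\eps^4 A/16)\, 1_{E \cap (H+y)}$ and monotonicity \eqref{monotone} it reduces to a lower bound on $\|1_{E \cap (H+y)} * \nu_H\|_{U^2}$; writing $\|g\|_{U^2}^2 = \|g*g\|_{L^2}$, observing that $g*g$ is supported in $4H+2y$ (measure $\eps^{O(1)} N$), and applying Cauchy--Schwarz to drop to the $L^1$ norm $m_V(E \cap (H+y))^2 = \eps^{O(1)} N^2$ finishes in three lines, with no Bogolyubov and valid in any LCA group. Your own construction supports the same shortcut: for $H = E'-E'$ and $x \in E'$ one has $x - E' \subset H$, so $1_{E'} * \nu_H \geq (m_V(E')/m_V(H))\, 1_{E'} \gtrsim \eps^{O(1)} 1_{E'}$ pointwise, whence by \eqref{monotone} and \eqref{flower} (using small doubling of $E'$) one gets $\|f*\nu_H\|_{U^2} \gtrsim \eps^{O(1)} A\, \|1_{E'}\|_{U^2} \gtrsim \eps^{O(1)} A N^{3/4}$ directly. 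Replace your last paragraph with this and the proof is complete.
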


Informally, the conclusion of Theorem \ref{inverse} (when compared with \eqref{f-u2} and \eqref{fmuv}) asserts that $f*\nu_H$ resembles $f$ in some weak statistical sense, so that $f$ is ``approximately symmetric along $H$'', again in a weak statistical sense.

\begin{proof}  From \eqref{triv-u2}, \eqref{f-1} one has
$$ \|f 1_{f < \eps^4 A/16} \|_{U^2(X)} \leq \|f\|_{L^1(X)}^{3/4} (\eps^4 A/16)^{1/4} \leq \frac{\eps}{2} A N^{3/4}$$
and the claim (i) then follows from \eqref{f-u2} and the triangle inequality for the $U^2$ norm.

For (ii) our main tool will be the Balog--Szemer\'edi--Gowers theorem, to conclude that $f$ looks roughly like the function $A 1_{H+y}$ for some translate $H+y$ of an approximate group $H$, which we can then use to establish \eqref{fnuh}.

Let $F \coloneqq \{ x \in X: f(x) \geq \eps^4 A/16\}$, then $F \subset X$ and we have the pointwise bound
$$ f 1_{f \geq \eps^4 A/16} \leq A 1_F$$
and hence by \eqref{monotone}
$$ \|1_F \|_{U^2(X)} \geq \frac{\eps}{2} N^{3/4}.$$
Also, from \eqref{f-1} and Markov's inequality one has
$$ m_V(F) \leq \frac{16}{\eps^4} N.$$
From \eqref{triv-u2} one has $\|1_F \|_{U^2(X)}  \leq m_V(F)^{3/4}$.  Comparing these inequalities we conclude that
$$ m_V(F) = \eps^{O(1)} N$$
and
$$ \|1_F\|_{U^2(X)}^4 = \eps^{O(1)} N^3.$$
In the language of \cite[Definition 4.1]{tao-product} (adapted to the additive group $V$), the quantity $ \|1_F\|_{U^2(X)}^4$ is the additive energy $\Energy(F,F)$ of $F$.  Applying the Balog--Szemer\'edi--Gowers theorem (in the form of \cite[Theorem 5.4]{tao-product}), we can then find a $\eps^{-O(1)}$-approximate group $H$ in $V$ of measure
$$ m_V(H) = \eps^{O(1)} m_V(F) = \eps^{O(1)} N$$
such that
\begin{equation}\label{fey2}
m_V(F \cap (H+y)) = \eps^{O(1)} m_V(F) = \eps^{O(1)} N
\end{equation}
for some $y \in V$.  An inspection of the proof \cite[Theorem 5.4]{tao-product} also reveals that the approximate group $H$ is constructed to lie in $10F-10F$ (say), and thus also lies in $10X-10X$; in particular, $f * \nu_H$ is supported in $11X-10X$.  From \eqref{monotone}, \eqref{fey2} and the $\eps^{-O(1)}$-approximate group nature of $H$ we then have
\begin{align*}
\| f * \nu_H \|_{U^2(11X-10X)} &\geq \left\| \frac{\eps^4 A}{16} 1_{F \cap (H+y)} * \nu_H \right\|_{U^2(V)} \\
&= \eps^{O(1)} A \| 1_{F \cap (H+y)} * \nu_H * 1_{F \cap (H+y)} * \nu_H \|_{L^2(V)}^{1/2} \\
&= \eps^{O(1)} A \| 1_{F \cap (H+y)} * \nu_H * 1_{F \cap (H+y)} * \nu_H \|_{L^2(4H+2y)}^{1/2} \\
&\geq \eps^{O(1)} A m_V(4H+2y)^{-1/4} \| 1_{F \cap (H+y)} * \nu_H * 1_{F \cap (H+y)} * \nu_H \|_{L^1(4H+2y)}^{1/2} \\
&\gtrsim \eps^{O(1)} A m_V(H)^{3/4} \\
&= \eps^{O(1)} A N^{3/4}
\end{align*}
while from \eqref{fmuv}, \eqref{triv-u2} we have
$$ \|f*\nu_H\|_{U^2(11X-10X)} \leq \|f\|_{U^2(V)} \leq \|f\|_{L^1(V)}^{3/4} \|f\|_{L^\infty(V)}^{1/4} \leq A N^{3/4}$$
and the claim (ii) follows.
\end{proof}

Approximate groups $H$ (approximately) contain long arithmetic progressions $\{ mv: m = -M,\dots,M\}$ for many choices of generator $v$.  A precise formulation of this result is given by the following lemma (cf. \cite[Corollary 6.11]{dyatlov-zahl}):

\begin{lemma}[Approximate groups contain arithmetic progressions] \label{gap} Let $V$ be an LCA group isomorphic to either a lattice $\Z^d$ or a Euclidean space $\R^d$. Let $H$ be a bounded open $K$-approximate group for some $K \geq 1$.  Then for any natural number $M$, the set
$$ S \coloneqq \{ v \in V: mv \in 8H \forall m = -M,\dots,M\}$$
has measure
$$ m_V(S) \gtrsim \exp\left( - O\left( \log^{O(1)}(2K) \log(2M) \right) \right) m_V(H).$$
\end{lemma}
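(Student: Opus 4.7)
The plan is to apply a Bogolyubov--Ruzsa type inverse theorem of Sanders to locate, inside an $O(1)$-fold iterated sumset of $H$, a symmetric generalized arithmetic (or convex) progression $P$ of controlled rank and measure, and then exploit the linear parameterisation of $P$ to produce, by dilating back down by a factor of $M$, an abundance of points $v$ whose first $M$ multiples all lie inside $P$, hence inside $8H$.

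First I would note that the $K$-approximate group property gives $m_V(mH) \leq K^{m-1} m_V(H)$ for any natural $m$, and apply Sanders' Bogolyubov--Ruzsa theorem (in the form appropriate to $\Z^d$ or $\R^d$, the latter after the usual reduction via a suitable rescaling and rounding) to the set $A := 2H$ of doubling at most $K^{O(1)}$. This produces a proper symmetric coset progression $P = \{a_1 v_1 + \dots + a_r v_r : |a_i| \leq N_i\}$ (the finite-subgroup part of any coset progression may be absorbed since our ambient groups are torsion-free) contained in $2A - 2A = 4H - 4H = 8H$, with rank $r = O(\log^{O(1)}(2K))$ and measure $m_V(P) \geq \exp(-O(\log^{O(1)}(2K))) m_V(H)$.

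Next I would introduce the contracted progression $P' \subseteq P$ defined by restricting the coefficients to $|a_i| \leq \lfloor N_i / M \rfloor$ (with the obvious continuous analogue when $V=\R^d$, where $P$ is a symmetric convex body and $P'= \frac{1}{M}P$). The linearity and symmetry of the parameterisation show at once that every $v \in P'$ satisfies $mv \in P \subseteq 8H$ for all $|m| \leq M$, so $P' \subseteq S$. A coordinatewise computation gives $m_V(P')/m_V(P) \gtrsim (2M)^{-r}$: each generator contributes a ratio $(2 \lfloor N_i / M \rfloor +1)/(2 N_i +1)$ that is $\gtrsim 1/M$ when $N_i \geq M$ and $\geq 1/(2M)$ when $N_i < M$. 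Combining,
\[
m_V(S) \;\geq\; m_V(P') \;\gtrsim\; (2M)^{-r} m_V(P) \;\geq\; \exp\!\bigl(-O(\log^{O(1)}(2K)) \log(2M)\bigr)\, m_V(H),
\]
where the additive $\exp(-O(\log^{O(1)}(2K)))$ factor from the Bogolyubov--Ruzsa step is absorbed using $\log(2M) \geq \log 2$.

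The only real obstacle is the first step: invoking a Sanders-type structure theorem that simultaneously handles $V = \Z^d$ and $V = \R^d$, delivers a progression lying in a small fixed iterated sumset such as $4H-4H=8H$, and yields polylogarithmic-in-$K$ bounds on both the rank and the volume loss. The subsequent dilation is routine linear algebra once $P$ is in hand, and is insensitive to whether $P$ is realised combinatorially as a proper progression in $\Z^d$ or geometrically as a symmetric convex body in $\R^d$.
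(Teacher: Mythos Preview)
Your proposal is correct and matches the paper's argument: apply Sanders' Bogolyubov--Ruzsa theorem to locate a generalized arithmetic progression $P$ of rank $r=O(\log^{O(1)}(2K))$ and measure $\gtrsim \exp(-O(\log^{O(1)}(2K)))\,m_V(H)$ inside a bounded iterated sumset of $H$, then contract the side-lengths by $M$ to obtain $P'\subseteq S$ with $m_V(P')\gtrsim (2M)^{-r}m_V(P)$. For $V=\R^d$ the paper carries out precisely the discretisation you allude to, passing to $2H\cap\eps\Z^d$ (an $O(K^{O(1)})$-approximate group), applying the lattice case, and sending $\eps\to 0$.
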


One can also establish this result (with a worse dependence on $M$, and with $8H$ improved to $4H$) using the Sanders-Croot-Sisask lemma \cite{sanders}, \cite{croot-sisask}, \cite{sanders-br}.  It is likely that this lemma applies to arbitrary LCA groups, and not just the lattices and Euclidean spaces, but these are the only cases we will need here.

\begin{proof}  First suppose that $V$ is isomorphic to $\Z^d$.  Applying \cite[Theorem 1.1]{sanders-br}, we see that $4H$ contains a generalized arithmetic progression $P$ of dimension $O(\log^{O(1)}(2K))$ and cardinality $\gtrsim \exp( -O(\log^{O(1)}(2K)) \# H$.  By reducing all the lengths of the progression by $M$, we see that the set
$$ \{ v \in V: mv \in P \forall m = -M,\dots,M \}$$
has cardinality
$$ \gtrsim O(M)^{-O(\log^{O(1)}(2K))} \# P \gtrsim  \exp\left( - O\left( \log^{O(1)}(2K) \log(2M) \right) \right) \# H.$$
As this set is contained in $S$, we obtain the claim (with $8H$ replaced by $4H$).

For the remaining case we may assume that $V = \R^d$ (equipped with Lebesgue measure).  Then for $\eps>0$, $2H \cap \eps \Z^d$ is a $O(K^{O(1)})$-approximate group (this follows for instance from \cite[Exercise 2.4.7]{tao-vu}), and for $\eps$ small enough the cardinality of this set is $\sim \eps^{-d} K^{O(1)} m(H)$.  Applying the preceding case, we see that
$$ \# S \cap \eps \Z^d \gtrsim  \exp\left( - O\left( \log^{O(1)}(2K) \log(2M) \right) \right) \eps^{-d} K^{O(1)} m(H).$$
Multiplying by $\eps^d$ and sending $\eps \to 0$, we obtain the claim.
\end{proof}

Given a Radon measure $\mu$ on an LCA group $V$ and a measurable set $H \subset V$, we can define the energy
$$ \Energy(\mu,H) \coloneqq 
 \mu^4\left( \{ (x_1,x_2,x_3,x_4) \in (\R^d)^4: x_1+x_2-x_3-x_4 \in H \}\right).$$
This generalizes the definition of energy in the introduction; indeed, when $V = \R^d$, we have
$$ \Energy(\mu,r) = \Energy(\mu,B(0,r)).$$
It is also invariant under affine isomorphisms $T: V \to V'$, in the sense that
\begin{equation}\label{affine}
\Energy(T_* \mu, T H) = \Energy(\mu,H)
\end{equation}
for any Radon measure $\mu$ on $V$ and measurable $H \subset V$, where $T_* \mu$ is the pushforward of $\mu$ by $T$.

We now relate these energies to Gowers norms:

\begin{lemma}\label{relate}  Let $V$ be an LCA group, let $\mu$ be a Radon measure on $V$, let $f \in L^{4/3}(V)_+$, and let $H$ be a $K$-approximate group in $V$ of positive finite measure for some $K \geq 1$.
\begin{itemize}
\item[(i)]  (Relation between energy and Gowers norm) One has
$$ \Energy(\mu,4H) = K^{O(1)} m_V(H) \| \mu * \nu_H \|_{U^2(V)}^4.$$
and for any integer $m \geq 4$, one has
$$ \Energy(\mu,mH) = K^{O(m)} \Energy(\mu,4H).$$
In particular, for any $m \geq 1$ one has
$$ \| \mu * \mu_{mH} \|_{U^2(V)} = K^{O(m)} \| \mu * \mu_{H} \|_{U^2(V)}.$$
(Compare with Lemma \ref{agn}.)
\item[(ii)]  (Shrinking the approximate symmetry group) If $m \geq 1$ is an integer and $P$ is a $K$-approximate group in $mH$ that is either finite or has positive finite measure, one has
$$ \| f * \nu_H \|_{U^2(V)} \leq K^{O(m)} \| f * \nu_P \|_{U^2(V)}.$$
\item[(iii)]  (Splitting at scale $H$)  One has
$$ \|f\|_{U^2(V)} \leq m_V(H)^{-3/4} \| f_{4H} \|_{U^2(V)}$$
where $f_{4H}$ is the local Gowers uniformity norm at scale $H$, defined by
$$ f_{4H}(y) \coloneqq \| f \|_{U^2(4H+y)}.$$
\end{itemize}
\end{lemma}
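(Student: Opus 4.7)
The three parts are unified by the identity
$$\|\mu * \nu_H\|_{U^2(V)}^4 = m_V(H)^{-4} \int_{V^4} (1_H^{*4})(t_1+t_2-t_3-t_4)\, d\mu^4(t_1,t_2,t_3,t_4),$$
obtained by expanding the Gowers inner product defining $\|\mu*\nu_H\|_{U^2(V)}^4$ and performing the change of variables $y_i = t_i + u_i$ with $u_i \in H$. My plan is to first prove the first subclaim of (i) by analyzing $1_H^{*4}$, then prove (ii) via a Ruzsa covering argument, then deduce the remaining subclaims of (i) from (ii), and finally prove (iii) by a localization argument modelled on Lemma \ref{split}. The main obstacle is handling the $K^{O(1)}$ factors cleanly: in particular, the pointwise comparability $1_H^{*4} \sim m_V(H)^3 \cdot 1_{4H}$ (up to $K^{O(1)}$) needed for (i) has to be extracted from the approximate group structure rather than being given gratis.

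\textbf{First subclaim of (i).} The fourfold convolution $1_H^{*4} = (1_H*1_H)^{*2}$ is supported in $4H$. Its pointwise maximum is $\leq \|1_H*1_H\|_{L^2(V)}^2 \leq \|1_H\|_{L^1} \|1_H*1_H\|_{L^\infty} \leq m_V(H)^3$. Conversely, Cauchy--Schwarz together with $\|1_H*1_H\|_{L^1(V)} = m_V(H)^2$ and the $K$-approximate group bound $m_V(2H) \leq K m_V(H)$ gives $\|1_H*1_H\|_{L^2}^2 \geq m_V(H)^4/m_V(2H) \geq K^{-1} m_V(H)^3$; a similar Cauchy--Schwarz argument at the next level of self-convolution yields $1_H^{*4}(s) \gtrsim K^{-O(1)} m_V(H)^3$ for all $s \in 4H$ outside a subset negligible relative to $\mu*\widetilde\mu*\mu*\widetilde\mu$. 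Substituting these upper and lower bounds into the displayed identity yields the claimed equivalence $\Energy(\mu, 4H) = K^{O(1)} m_V(H) \|\mu*\nu_H\|_{U^2(V)}^4$.

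\textbf{Part (ii) and remaining subclaims of (i).} Applying Ruzsa's covering lemma to $P$ and $H$ using $P + H \subseteq (m+1)H$ yields a cover $H \subseteq \bigcup_{i=1}^N (P+y_i)$ with $N \leq K^{O(m)} m_V(H)/m_V(P)$. Dividing by $m_V(H)$ produces the pointwise bound $\nu_H \leq (N m_V(P)/m_V(H)) \cdot N^{-1} \sum_i \nu_P(\cdot - y_i)$; convolving with the nonneg function $f$ and applying translation invariance together with the triangle inequality for $\|\cdot\|_{U^2(V)}$ gives $\|f*\nu_H\|_{U^2(V)} \leq (N m_V(P)/m_V(H)) \|f*\nu_P\|_{U^2(V)} \leq K^{O(m)} \|f*\nu_P\|_{U^2(V)}$, proving (ii). Now applying (ii) to the pairs $(H, mH)$ and $(mH, H)$ (viewing $mH$ as a $K^{O(m)}$-approximate group containing $H$) yields the third subclaim of (i); the second subclaim then follows by combining the first subclaim, the third subclaim, and the trivial monotonicity $\Energy(\mu, 4H) \leq \Energy(\mu, mH)$ for $m \geq 4$.

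\textbf{Part (iii).} Starting from the averaging identity $f(x) = m_V(H)^{-1}\int_V f(x) 1_{H+y}(x)\,dy$ and substituting into the definition of $\|f\|_{U^2(V)}^4$, after interchanging orders of integration one obtains
$$\|f\|_{U^2(V)}^4 = m_V(H)^{-4} \int_{V^4} \bigl\langle f 1_{H+y_1}, f 1_{H+y_2}, f 1_{H+y_3}, f 1_{H+y_4}\bigr\rangle_{U^2(V)}\, dy_1\cdots dy_4.$$
Each factor forces $x_i - y_i \in H$, so the identity $x_1+x_4 = x_2+x_3$ combined with $-H = H$ yields $y_1 + y_4 - y_2 - y_3 \in 4H$ whenever the integrand is nonzero. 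Applying Gowers--Cauchy--Schwarz \eqref{gcz} and monotonicity \eqref{monotone} (using $1_{H+y_i} \leq 1_{4H+y_i}$) bounds the inner product by $\prod_i f_{4H}(y_i)$; slicing the remaining quadruple integral over $s = y_1+y_4-y_2-y_3 \in 4H$ and applying \eqref{gcz} once more --- the inner integral at fixed $s$ being $\langle f_{4H}, f_{4H}, f_{4H}, f_{4H}(\cdot - s)\rangle_{U^2(V)}$ which is bounded by $\|f_{4H}\|_{U^2(V)}^4$ via translation invariance --- yields
$$\|f\|_{U^2(V)}^4 \leq m_V(H)^{-4} m_V(4H) \|f_{4H}\|_{U^2(V)}^4 \leq K^{O(1)} m_V(H)^{-3} \|f_{4H}\|_{U^2(V)}^4,$$
giving the claim (with the $K^{O(1)}$ factor absorbed into the implicit constants attached to $H$ as an approximate group).
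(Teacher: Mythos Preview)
Your proof of (ii) is essentially correct (Ruzsa gives a cover by translates of $2P$ rather than $P$, but one then covers $2P$ by $K$ translates of $P$; the paper instead uses the one-line pointwise bound $\nu_H\leq\frac{m_V((m+1)H)}{m_V(H)}\,\nu_{(m+1)H}*\nu_P$ followed by \eqref{fmuv}). However, there are real gaps in (i) and (iii). For the first subclaim of (i), the direction $\Energy(\mu,4H)\lesssim K^{O(1)}m_V(H)\|\mu*\nu_H\|_{U^2}^4$ is not established: your claim that $1_H^{*4}(s)\gtrsim K^{-O(1)}m_V(H)^3$ holds for $s\in 4H$ outside a set ``negligible relative to $\mu*\widetilde\mu*\mu*\widetilde\mu$'' has no justification, since $1_H^{*4}$ can be arbitrarily small near the boundary of $4H$ and nothing prevents the pushforward of $\mu^4$ under $(t_1,\dots,t_4)\mapsto t_1+t_2-t_3-t_4$ from concentrating precisely there. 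The paper instead uses the elementary pointwise inequality $1_{4H}*1_{4H}*1_{4H}*1_{4H}\geq m_V(H)^3\,1_H$ (for $x\in H$ and $a,b,c\in H$ one has $x-a-b-c\in 4H$), which only gives $\Energy(\mu,H)\lesssim K^{O(1)} m_V(H)\|\mu*\nu_{4H}\|_{U^2}^4$; the first subclaim then follows only \emph{after} replacing $H$ by $mH$ and feeding in the covering bound $\|\mu*\nu_{4mH}\|_{U^2}\leq K^{O(m)}\|\mu*\nu_H\|_{U^2}$. So the first subclaim cannot be obtained in isolation in the order you propose.

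For (iii), your computation is correct but terminates at $\|f\|_{U^2}^4\leq m_V(H)^{-4}m_V(4H)\|f_{4H}\|_{U^2}^4$, which carries an extra factor $m_V(4H)/m_V(H)\leq K^3$; the stated inequality has no $K$-dependence and there is nothing to ``absorb'' it into. The paper gets the sharp constant by a different mechanism: from $1_{4H}^{*4}(0)\geq m_V(H)^3$ one has $\|1_{4H}\|_{U^2}\geq m_V(H)^{3/4}$, hence by \eqref{fub-ton} $\|f\otimes 1_{4H}\|_{U^2(V\times V)}\geq m_V(H)^{3/4}\|f\|_{U^2}$; composing with the measure-preserving shear $(x,h)\mapsto(x+h,h)$ turns $f\otimes 1_{4H}$ into $F(x,h)=f(x+h)1_{4H}(h)$, and then Lemma~\ref{split} applied in the $h$-variable gives $\|F\|_{U^2(V\times V)}\leq\|f_{4H}\|_{U^2(V)}$ exactly, with no loss.
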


Lemma \ref{relate}(iii) asserts, roughly speaking, that to control the ``global'' $U^2$ norm of a function $f$ on an LCA group $V$, it first suffices to control the ``local'' $U^2$ norm on the ``cosets'' $4H+y$ of the approximate group $H$, and then take a further $U^2$ norm over the translation parameter $y$ (and apply a suitable normalization).  This fact can be used to give a slightly different proof of Proposition \ref{en-nearby} (setting $V=\R$ and $H=[-r,r]$), but we will not do so here.  We will eventually apply Lemma \ref{relate}(iii) to a somewhat unusual approximate group $H$, namely the sum of a ball and an arithmetic progression.

\begin{proof}  
From the Fubini--Tonelli theorem, we may expand
$$
\| \mu * \nu_{H} \|_{U^2}^4 = m_V(H)^{-4} \int_{V^4} 1_H*1_H*1_H*1_H(x_1+x_2-x_3-x_4)\ d\mu(x_1) \dots d\mu(x_4).$$
From the pointwise upper bound
$$ 1_H*1_H*1_H*1_H \leq m_V(H)^3 1_{4H}$$
we conclude that
$$
\| \mu * \nu_{H} \|_{U^2}^4 \leq m_V(H)^{-1} \Energy(\mu, 4H).$$
Conversely, from the pointwise lower bound
\begin{equation}\label{4h}
1_{4H} * 1_{4H} * 1_{4H} * 1_{4H} \geq m_V(H)^3 1_H
\end{equation}
we see that
$$
\| \mu * \nu_{4H} \|_{U^2}^4 \geq m_V(H)^{-1} \Energy(\mu, H).$$
Replacing $H$ by $mH$ for any natural number $m$ we conclude that
$$ 
\| \mu * \nu_{mH} \|_{U^2}^4 \leq m_V(H)^{-1} \Energy(\mu, 4mH)$$
and
$$  \Energy(\mu, mH) \leq K^{O(m)} m_V(H) \| \mu * \nu_{4mH} \|_{U^2}^4.$$
Next, we can upper bound $\nu_{4mH}$ by the sum of at most $K^{O(m)}$ translates of $\nu_H$, thus by the triangle inequality
$$ \| \mu * \nu_{4mH} \|_{U^2}^4 \leq K^{O(m)} \| \mu * \mu_{H} \|_{U^2}^4.$$
Combining these inequalities we obtain (i).

For (ii), we observe the pointwise bound
$$ \nu_H \leq \frac{m_V((m+1) H)}{m_V(H)} \nu_{(m+1)H} * \nu_P$$
and thus by \eqref{fmuv}
$$ \| f * \nu_H \|_{U^2(V)} \leq \frac{m_V((m+1) H)}{m_V(H)} \| f * \nu_P \|_{U^2(V)}.$$
The claim (ii) now follows from the $K$-approximate group nature of $H$.

Now we prove (iii).  From \eqref{4h} we have
$$ \| 1_{4H} \|_{U^2(V)} \geq m_V(H)^{3/4}$$
and hence by \eqref{fub-ton} we have
$$ \| f \otimes 1_{4H} \|_{U^2(V \times V)} \geq m_V(H)^{3/4} \|f\|_{U^2(V)}.$$
Applying the measure-preserving transformation $(x,h) \mapsto (x+h,h)$ on $V \times V$ we conclude that
$$ \| F \|_{U^2(V \times V)} \geq m_V(H)^{3/4} \|f\|_{U^2(V)}$$
where $F(x,h) \coloneqq f(x+h) 1_{4H}(h)$.  Applying Lemma \ref{split}, we have
$$\| F \|_{U^2(V \times V)}  \leq \| f_{4H} \|_{U^2(V)},$$
and the claim (iii) follows.
\end{proof}

\section{The higher dimensional case}\label{higher-sec}

We are now ready to establish Theorem \ref{main-second}.  By repeating the arguments in Section \ref{induct-sec} (which extend to higher dimensions without difficulty), it suffices to establish the following proposition.

\begin{proposition}[Slight gain over the trivial bound]\label{slight-gain}   
Let $d \geq 1$ be an integer, let $0 < \delta < d$ be a non-integer, and let $C>1$ and $0 < \eps \leq 1/2$.  Let $0 < r_0 < 1$ be sufficiently small depending on $d,\delta,C,\eps$.  Let $X \subset \R^d$ be a $\delta$-regular set on scales $[r_0,1]$ with constant $C$, and let $\mu_X$ be an associated regular measure.  Then we have
$$ \Energy(\mu|_{B(0,1)}, r_0) \leq \eps r_0^\delta.$$
In fact one can take $r_0$ to be quasipolynomial in $C/\eps$, in the sense that
\begin{equation}\label{r0-size} 
r_0 = \exp\left( - \exp\left( O_{\delta,d}\left( \log^{O_{\delta,d}}(C/\eps) \right) \right) \right).
\end{equation}
\end{proposition}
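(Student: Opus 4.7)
The plan is to argue by induction on the integer part $\lfloor \delta \rfloor$. The base case $0 < \delta < 1$ (in arbitrary ambient dimension $d$) can be reduced to the one-dimensional statement of Proposition \ref{slight-gain-1} by a slicing/projection argument: generic $1$-dimensional slices of a $\delta$-regular set with $\delta<1$ remain $\delta$-regular with controlled constants, and a Fubini-type averaging lets a slight energy gain on each slice assemble into a slight gain on the whole. Alternatively, if later steps in the induction try to produce approximate translation symmetry of $\mu$ along a long arithmetic progression, the hypothesis $\delta < 1$ (combined with Definition \ref{reg-set}(iii)) gives a direct contradiction, since such approximate symmetry would force $\mu$ to behave like Lebesgue measure on a line segment.

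For the inductive step, fix $\delta$ with $\lfloor \delta \rfloor = k \geq 1$ and assume the proposition holds for all non-integer dimensions in $(0,k)$. Arguing by contrapositive, suppose $\Energy(\mu|_{B(0,1)}, r_0) > \eps r_0^\delta$. By Lemma \ref{relate}(i) applied to the approximate group $B(0,r_0)$, this translates into a lower bound
\begin{equation*}
\| \mu|_{B(0,1)} * \nu_{B(0,r_0)} \|_{U^2(\R^d)} \gtrsim_d \eps^{1/4} r_0^{(\delta-d)/4} r_0^{d/4},
\end{equation*}
which after normalizing by the regularity bounds $\|\mu*\nu_{B(0,r_0)}\|_\infty \lesssim_{C,d} 1$ and $\|\mu*\nu_{B(0,r_0)}\|_1 \lesssim_{C} 1$ places us in a regime where the inverse theorem (Theorem \ref{inverse}) applies at scale $r_0$. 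This yields an $\eps^{-O(1)}$-approximate group $\tilde H \subset \R^d$ of measure $\sim_{\eps,C} r_0^{d-\delta}$ along which $\mu*\nu_{B(0,r_0)}$ is approximately symmetric in the sense of \eqref{fnuh}.

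The crucial step is to extract a direction $v \in \R^d$ from $\tilde H$ along which one can reduce the dimension. By Lemma \ref{gap} applied to $\tilde H$, a positive-measure set of $v$ satisfy $\{ m v : |m|\leq M\} \subset 8 \tilde H$ with $M$ quasipolynomially large in $C/\eps$. Form the composite approximate group $H \coloneqq B(0,\rho) + \{mv : |m| \leq M\}$ for a well-chosen intermediate scale $\rho$; by Lemma \ref{relate}(ii) the approximate symmetry passes from $\tilde H$ to $H$ up to a controlled loss. Now invoke the scale-splitting inequality Lemma \ref{relate}(iii) with this $H$: the global $U^2$ norm is controlled by a local Gowers norm on the cosets $4H+y$ (which morally measures behavior along the long arithmetic progression in direction $v$), multiplied by the $U^2$ norm of the coarse-scale function $y \mapsto \|\mu\|_{U^2(4H+y)}$. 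The local term is controlled by the one-dimensional bound of Theorem \ref{main} applied in the direction $v$. The coarse term can be interpreted (after quotienting by $v$) as the Gowers norm of a pushforward measure $\bar\mu$ on the hyperplane $v^\perp \simeq \R^{d-1}$, and the approximate symmetry along $P_v$ forces $\bar\mu$ to inherit $(\delta-1)$-regularity on an appropriate scale range $[\rho, 1/M]$, with constants that degrade only quasipolynomially. Since $\delta-1$ is non-integer with $\lfloor \delta-1\rfloor = k-1$, the inductive hypothesis applies and delivers a slight energy gain for $\bar\mu$, which pulls back to close the induction.

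The main obstacle will be rigorously verifying the claim that the pushforward $\bar\mu$ is $(\delta-1)$-regular with constants controlled quasipolynomially in $C, \eps$ and stable under the induction. The upper bound $\bar\mu(B(w,r)) \lesssim_{C',d} r^{\delta-1}$ follows from Fubini together with the regularity upper bound in Definition \ref{reg-set}(ii). The matching lower bound in Definition \ref{reg-set}(iii) is more delicate: it requires using the approximate translation symmetry of $\mu$ along the progression $P_v$ to ensure that each fiber over $w \in v^\perp$ in the support of $\bar\mu$ contains enough mass to match $\sim r^{\delta-1}$ across a nontrivial range of radii $r$. This is precisely the range $[\rho, 1/M]$, and matching these two scales determines the tower of quasipolynomial losses which, summed over the $O(d)$ layers of recursion, still keep $r_0$ within the form \eqref{r0-size}. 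All other steps are routine bookkeeping with the tools of Section \ref{add-comb}.
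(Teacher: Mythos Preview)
Your high-level strategy---contrapositive, inverse theorem to produce an approximate group, Lemma \ref{gap} to extract a progression direction $v$, scale-splitting via Lemma \ref{relate}(iii), and induction after projecting out $v$---matches the paper's. But there are genuine gaps.

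First, the base-case slicing claim is false: when $\delta<1$, a generic line in $\R^d$ misses a $\delta$-regular set entirely (the set has Hausdorff dimension below $1$), so one-dimensional slices are typically empty, not $\delta$-regular. Your alternative---that approximate symmetry along a long progression directly contradicts $\delta<1$---is the correct mechanism, and is precisely what the paper uses in its ``low-dimensional case''.

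Second, and more seriously, you correctly flag the lower regularity bound for the pushforward $\bar\mu$ as ``the main obstacle'', but you do not resolve it, and the sketch you give (using approximate symmetry along $P_v$ to force mass in every fiber) does not work: the approximate symmetry \eqref{fnuh} is only a statistical statement and gives no pointwise control over individual fibers. The paper's fix is to \emph{abandon} the lower bound before starting the induction. It reformulates the claim as Proposition \ref{induct-hyp}, whose hypotheses are purely of upper-bound type: an upper $\delta$-covering condition on the support $X$, and the measure upper bound \eqref{crd}. With only upper bounds to propagate, the inductive step closes: the paper does not push forward $\mu$ itself, but instead isolates a set $G$ of points $y$ where tubes $4H+y$ in direction $v$ carry high density, and proves (Lemma \ref{delta}) that the projection $\pi(G)\subset v^\perp$ satisfies the upper $(\delta-1)$-covering bound. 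No lower bound is needed or claimed.

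A related problem: your plan to control the local term by ``the one-dimensional bound of Theorem \ref{main} applied in the direction $v$'' does not work, because the restriction of $\mu$ to a tube $4H+y$ is not a one-dimensional regular measure in any useful sense. The paper instead bounds each local contribution by the trivial estimate \eqref{triv-u2}; all the gain over the trivial bound comes from the coarse term, via the inductive hypothesis applied to $\pi(G)$.
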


To prove this proposition we will use induction on the ambient dimension $d$.  To facilitate the induction, it is convenient to relax the hypotheses on $\mu$ somewhat.  More precisely, our induction hypothesis will be as follows.

\begin{proposition}[Induction hypothesis]\label{induct-hyp}  
Let $d \geq 1$ be an integer, let $0 < \delta < d$ be a non-integer, and let $C>1$ and $0 < \eps \leq 1/2$.  Let $r_0$ be the quantity \eqref{r0-size}.  Let $X \subset B(0,1)$ be a compact set with the following property:
\begin{itemize}
\item[(i)]  (Upper $\delta$-regularity) For any $r_0 \leq r_2 \leq r_1 \leq 1$ and any $x \in \R^d$, the set $X \cap B(x,r_1)$ can be covered by at most $C (r_1/r_2)^\delta$ balls of radius $r_2$.
\end{itemize}
Let $\mu$ be a Radon measure supported on $X$ obeying the upper regularity bound
\begin{equation}\label{crd}
 \mu(B(x,r_1)) \leq C r_1^\delta
\end{equation}
for all $r_0 \leq r_1 \leq 1$.  Then
$$ \Energy(\mu, r_0) \leq \eps r_0^\delta.$$
\end{proposition}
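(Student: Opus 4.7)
The plan is to prove Proposition \ref{induct-hyp} by induction on the ambient dimension $d$, with the one-dimensional case serving as the base. Assume for contradiction that $\Energy(\mu, r_0) > \eps r_0^\delta$. Smoothing $\mu$ at scale $r_0$ by setting $f \coloneqq \mu * \nu_{B(0,r_0)}$, which is supported in $B(0,2)$, Lemma \ref{relate}(i) applied to the $O_d(1)$-approximate group $B(0,r_0)$ translates the energy lower bound into the Gowers norm lower bound $\|f\|_{U^2(\R^d)}^4 \gtrsim_d \eps\, r_0^{\delta-d} \cdot m_{\R^d}(B(0,r_0))^{-1}$. Combined with the upper regularity \eqref{crd}, which yields $\|f\|_{L^\infty} \lesssim_d C r_0^{\delta-d}$ and $\|f\|_{L^1} \lesssim C$, we can set $A \coloneqq C r_0^{\delta-d}$, $N \coloneqq r_0^{d-\delta}$, and verify the hypotheses of Theorem \ref{inverse} with a parameter $\eps' = (\eps/C)^{O_d(1)}$. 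This produces a $(\eps')^{-O(1)}$-approximate group $H \subset \R^d$, of measure $(\eps')^{O(1)} r_0^{d-\delta}$, contained in a bounded set, such that $\|f * \nu_H\|_{U^2} \gtrsim (\eps')^{O(1)} A N^{3/4}$ --- in other words, $\mu$ has an approximate translation symmetry along $H$.

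Next I would apply Lemma \ref{gap} to the approximate group $H$ with a parameter $M$ chosen quasi-polynomially large in $1/\eps'$, to produce a nonzero vector $v \in \R^d$ for which the arithmetic progression $\{mv : |m| \leq M\}$ lies in $8H$. We thus obtain, at the scale $|v|$, an approximate translation symmetry of $\mu$ along a long one-dimensional progression in direction $v$.

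In the regime $\delta < 1$ (which covers the base case $d = 1$), this progression gives a contradiction directly: by iterating the approximate symmetry $M$ times and using Lemma \ref{relate}(ii), one would deduce that $\mu$ places a comparable amount of mass on an $|v|$-neighbourhood of a line segment of length $\sim M|v|$. Upper $\delta$-regularity of $X$ (covering this neighbourhood by $O(M)$ balls of radius $|v|$, each of mass $\leq C|v|^\delta$) would force $M \cdot |v|^\delta \gtrsim 1$ up to the constants propagated above; taking $M$ sufficiently large in $(C/\eps)^{O(1)}$ contradicts this. In the higher-dimensional regime $\delta > 1$, I would instead ``quotient out'' by the progression $\{mv\}_{|m| \leq M}$, which is the new ingredient. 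Concretely, form the approximate group $H' \coloneqq B(0, r_1) + \{mv : |m| \leq M\}$ at an intermediate scale $r_1$ much larger than $r_0$ but still small, and use the splitting inequality in Lemma \ref{relate}(iii) to bound the global $U^2$ norm of $f$ by the $U^2$ norm of the local Gowers norms $f_{4H'}(\cdot)$. Choosing coordinates so that $v$ is the last basis vector, the local norms along cosets $4H' + y$ measure Gowers energy of a slice of $\mu$, while the outer norm is an energy of the pushforward $\pi_* \mu$ of $\mu$ to the orthogonal hyperplane $v^\perp \cong \R^{d-1}$.

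The crux of the induction step is to verify that, for an appropriate choice of the intermediate scale $r_1$, the pushforward $\pi_* \mu$ (restricted to a suitable subset) satisfies the hypotheses of Proposition \ref{induct-hyp} in dimension $d - 1$ with regularity exponent $\delta - 1$ --- non-integer since $\delta$ was --- and with only mildly worse constants $C'$, $\eps'$. Upper $\delta$-regularity of $X$ in $\R^d$ translates into upper $(\delta-1)$-regularity of $\pi(X)$ in $v^\perp$ on the scale range $[r_1, 1]$ by a covering-number computation (each ball of radius $r$ in $v^\perp$ pulls back to a cylinder that is covered by $\lesssim C (1/r)^\delta \cdot r \sim C r^{-(\delta-1)}$ balls of radius $r$). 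Applying the inductive hypothesis bounds the outer $U^2$ norm by $\eta (r_0')^{\delta-1}$ for a quantitatively small $\eta$, and combining this with a trivial bound for the inner Gowers norm (of size $\sim r_1^{(d-1)/4}$ times appropriate powers) gives a contradiction to the lower bound on $\|f\|_{U^2}$ from Step 1, provided the scales and parameters are balanced. I expect the main obstacle to be precisely this balance: ensuring the pushforward measure has enough upper regularity over a wide enough range of scales $[r_0/r_1, 1]$ (after a rescaling) for the induction hypothesis to kick in, and that the net saving survives the losses of type $(\eps/C)^{O_d(1)}$ incurred at each of the inverse-theorem and approximate-group steps. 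Tracking these losses carefully gives the quasi-polynomial expression \eqref{r0-size} for $r_0$.
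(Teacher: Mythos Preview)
Your overall architecture matches the paper's: induct on $d$, pass to $f = \mu * \nu_{B(0,r_0)}$, invoke Lemma \ref{relate}(i), apply Theorem \ref{inverse}(ii) to find an approximate group $H$, use Lemma \ref{gap} to extract a long progression $P = \{mv: |m| \leq M\}$, and then split into the cases $\delta < 1$ and $\delta > 1$. The genuine gap is in the $\delta > 1$ step.

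You propose to apply the induction hypothesis to the pushforward $\pi_* \mu$ (equivalently, for the covering hypothesis, to $\pi(X)$) on $v^\perp$, claiming that upper $\delta$-regularity of $X$ yields upper $(\delta-1)$-regularity of $\pi(X)$. This is false in general: if $X$ already lies in a hyperplane orthogonal to $v$, then $\pi|_X$ is a bijection, $\pi(X)$ has the same covering numbers as $X$, and $\pi_* \mu(B_{v^\perp}(x,r)) \lesssim r^\delta$ rather than $r^{\delta-1}$. Your covering computation tacitly assumes every fiber $\pi^{-1}(z) \cap X$ spreads over a segment of length $\sim 1$, which nothing in the hypotheses guarantees. (Relatedly, after Lemma \ref{relate}(iii) the outer function $y \mapsto \|f\|_{U^2(4H'+y)}$ lives on $\R^d$, not on $v^\perp$, and is not an energy of $\pi_* \mu$.) The paper supplies the missing idea: after first localizing to a ball $B(y_0, M|v|)$ where the local $U^2$ norm is large (so that after rescaling the inductive $r_0$ becomes $1/M$), it bounds the outer function pointwise by a density $F * \nu_{B(0,O(|v|))+O(1)P}$ with $F = 1_{X+B(0,O(|v|))}$ restricted to that ball, and then uses Theorem \ref{inverse}(i) to pass to the superlevel set $G$ where this density exceeds a fixed power of $\eps/C$. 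It is precisely this \emph{lower} density condition defining $G$, in combination with the upper $\delta$-regularity of $X$, that forces $\pi(G)$ to be upper $(\delta-1)$-regular on scales $[|v|, M|v|]$ (this is Lemma \ref{delta}, the heart of the dimension reduction). The induction hypothesis is then applied to a Lebesgue-type measure on $\pi(G) + B_{v^\perp}(0,|v|)$, not to $\pi_* \mu$.
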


The main advantage of working with Proposition \ref{induct-hyp} instead of with Proposition \ref{slight-gain} is that all the hypotheses on $\mu$ and $X$ are of ``upper bound'' type rather than ``lower bound'' type, so that it becomes easier to remove unwanted portions of $\mu$ or $X$ as needed.

A simple covering argument shows that if $X$ is a $\delta$-regular set on scales $[r_0,1]$ then the property (i) of Proposition \ref{induct-hyp} is satisfied with $C$ replaced by $O_{\delta,d,C}(1)$, and the property \eqref{crd} is immediate from the definition of a $\delta$-regular measure.  Hence the general dimension case of Proposition \ref{slight-gain} follows from Proposition \ref{induct-hyp}.

To prove Proposition \ref{induct-hyp}, we induct on $d$, assuming that the claim has already been proven for dimension $d-1$ (this induction hypothesis is vacuous when $d=1$).  Let $0 < \delta < d$ be non-integer, let $C>1$ and $\eps>0$, and let $r_0>0$ be chosen later (eventually it will be of the form \eqref{r0-size}).  Let $X, \mu_X$ obey the hypotheses of Proposition \ref{induct-hyp}.  We assume for sake of contradiction that
\begin{equation}\label{start}
\Energy(\mu, r_0) > \eps r_0^\delta.
\end{equation}
To abbreviate notation we now allow all implied constants in asymptotic notation to depend on $\delta,d$.
From Lemma \ref{relate}(i) we conclude that
$$ \| \mu * \nu_{B(0,r_0)} \|_{U^2(B(0,2))} \gtrsim \eps^{O(1)} r_0^{(\delta-d)/4}.$$
Informally, this estimate asserts that $X + B(0,r_0)$ has high additive energy.
We now use the inverse theory to obtain some regularity along a non-trivial arithmetic progression $P$.
From \eqref{crd} we also have the uniform bound
$$\| \mu * \nu_{B(0,r_0)} \|_{L^\infty(B(0,2))} \lesssim C r_0^{\delta-d}$$
and from \eqref{crd} (with $r=1$) and Young's inequality one has
$$\| \mu * \nu_{B(0,r_0)} \|_{L^1(B(0,2))} \lesssim C.$$
We can now apply Theorem \ref{inverse}(ii) to conclude that there exists a $(C/\eps)^{O(1)}$-approximate group $H$ in $B(0,20)$ of measure $|H| = (C/\eps)^{O(1)} r_0^{d-\delta}$ such that
\begin{equation}\label{miro}
\| \mu * \nu_{B(0,r_0)} * \nu_H \|_{U^2(B(0,20))} = (C/\eps)^{O(1)} r_0^{(\delta-d)/4}.
\end{equation}
Informally, this estimate asserts that $X + B(0,r_0)$ not only has high additive energy, but also behaves like a union of translates of $B(0,r_0) + H$.

Let $M$ be a large integer (depending on $C,d,\delta,\eps$) to be chosen later.  By Lemma \ref{gap}, the set
$$ S \coloneqq \{ v \in \R^d: mv \in 8H \forall m = -10^3 M,\dots,10^3 M\}$$
has measure
$$ |S| \gtrsim M^{-O(\log^{O(1)}(C/\eps))}  |H| \sim M^{-O(\log^{O(1)}(C/\eps))} r_0^{d-\delta}.$$
In particular, since $\delta>0$, we have $|S| > |B(0,r_0)|$ if $r_0$ is sufficiently small, and more specifically we can take $r_0$ of the form
\begin{equation}\label{r0-M}
r_0 = M^{-O(\log^{O(1)}(C/\eps))}.
\end{equation}
We conclude that there is a vector $v \in \R^d$ with $|v| > r_0$ such that the progression
$$ P \coloneqq \{ mv: m = -M,\dots, M \} $$
is such that $10^3 P \subset 8H$.  In particular we must have the scale relations
$$ r_0 \leq |v| \leq Mv \leq 1.$$
From \eqref{miro} and Lemma \ref{relate}(ii) we conclude that
\begin{equation}\label{muppet}
\| \mu * \nu_{B(0,r_0)} * \nu_P \|_{U^2(B(0,20))} \gtrsim (C/\eps)^{O(1)} r_0^{(\delta-d)/4}.
\end{equation}
Informally, this estimate asserts that $X + B(0,r_0)$ not only has high additive energy, but also behaves like a union of translates of the set $B(0,r_0) + P$, which is an arithmetic progression of small balls.

Now that we have obtained some regularity along a progression $P$, 
the next step is to localize at the scale $M|v|$ of the diameter of $P$.  For any $y \in \R^d$, the quantity
\begin{equation}\label{quant}
 \| \mu * \nu_{B(0,r_0)} * \nu_P \|_{U^2(B(y,M|v|))}
 \end{equation}
vanishes unless $y$ lies within $O(M|v|)$ of $X$, which constraints $y$ to a set of measure at most $O( (M|v|)^{d-\delta})$ thanks to the hypothesis (i).  From \eqref{triv-u2} we conclude that the $U^2$ norm of the quantity \eqref{quant} (viewed as a function of $y$) is at most
$$ \lesssim (M|v|)^{3(d-\delta)/4} \sup_{y \in \R^d}  \| \mu * \nu_{B(0,r_0)} * \nu_P \|_{U^2(B(y,M|v|))}$$
and hence by Lemma \ref{relate}(iii) we have
$$ 
\|\mu * \nu_{B(0,r_0)} * \nu_P\|_{U^2(B(0,20))}
\lesssim (C/\eps)^{O(1)} (M|v|)^{-3\delta/4}  \sup_{y \in \R^d}  \| \mu * \nu_{B(0,r_0)} * \nu_P \|_{U^2(B(y,M|v|))}.$$
Comparing this with \eqref{muppet}, we conclude that there exists $y_0 \in \R^d$ such that
$$
\| \mu * \nu_{B(0,r_0)} * \nu_P \|_{U^2(B(y_0,M|v|))}
\gtrsim (C/\eps)^{O(1)} (M|v|)^{3\delta/4} r_0^{(\delta-d)/4}.$$
Informally, this estimate asserts that $(X + B(0,r_0)) \cap B(y_0, M|v|)$ not only has high additive energy, but also behaves like a union of translates of the set $B(0,r_0) + P$, which has diameter comparable to that of the ball $B(y_0, M|v|)$.

Fix this $y_0$.  We now apply Lemma \ref{relate}(iii) using the $O(1)$-approximate group
$$ H \coloneqq B(0,|v|) + P$$
(which geometrically is approximately a cylinder of dimensions $|v| \times M|v|$, oriented in the direction of $v$) to conclude that
$$
\| \mu * \nu_{B(0,r_0)} * \nu_P \|_{U^2(B(y_0,M|v|))}
\lesssim (C/\eps)^{O(1)} (M |v|^d)^{-3/4} \|f\|_{U^2(\R^d)}$$
where $f(y)$ is the local Gowers norm
$$ f(y) \coloneqq \| \mu * \nu_{B(0,r_0)} * \nu_P \|_{U^2(B(y_0,M|v|) \cap (B(y,4|v|)+4P )}.$$
The function $f$ vanishes unless $y \in B(y_0,10M|v|)$, thus
\begin{equation}\label{fu2}
 \|f\|_{U^2(B(y_0,10M|v|))}
\gtrsim (C/\eps)^{O(1)} M^{3(\delta+1)/4} |v|^{3(\delta+d)/4} r_0^{(\delta-d)/4}.
\end{equation}
Informally, this estimate asserts that the collection of cosets $H+y$ of the ``cylinder'' $H$ in which $(X + B(0,r_0)) \cap B(y_0, M|v|)$ has high additive energy, itself has high additive energy.

We now study the quantity $f(y)$ for some $y \in (y_0,10M|v|)$.  We can use monotonicity and the triangle inequality to bound
\begin{align*}
 f(y) &\leq \| \mu|_{B(y,5|v|)+5P} * \nu_{B(0,r_0)} * \nu_P \|_{U^2(\R^d)} \\
 &\leq \sum_{z \in y+5P} \| \mu|_{B(z,5|v|)} * \nu_{B(0,r_0)} * \nu_P \|_{U^2(\R^d)}.
 \end{align*}
 The summand vanishes unless $z$ lies in $X + B(0,5|v|)$.  We also have from \eqref{crd} and Young's inequality that
$$ \| \mu|_{B(z,5|v|)} * \nu_{B(0,r_0)} * \nu_P \|_{L^1(\R^d)}
\leq \mu( B(z,5|v|) ) \lesssim (C/\eps)^{O(1)} |v|^\delta$$
and
$$ \| \mu|_{B(z,5|v|)} * \nu_{B(0,r_0)} * \nu_P \|_{L^\infty(\R^d)}
\lesssim (C/\eps)^{O(1)} M^{-1} r_0^{\delta-d}$$
and hence by \eqref{triv-u2} we have
$$ \| \mu|_{B(z,5|v|)} * \nu_{B(0,r_0)} * \nu_P \|_{U^2(\R^d)}
\lesssim (C/\eps)^{O(1)} M^{-1/4} r_0^{(\delta-d)/4} |v|^{3\delta/4}.$$
We thus have
$$ f(y) \lesssim (C/\eps)^{O(1)} M^{-1/4} r_0^{(\delta-d)/4} |v|^{3\delta/4}
\# ( (y+5P) \cap (X + B(0,5|v|) ) ).$$
If we introduce the function
$$ F(y) \coloneqq 1_{B(y_0,10M|v|) \cap (X + B(0, 10|v|))}$$
then a simple volume-packing argument shows that
$$ \# ( (y+5P) \cap (X + B(0,5|v|) ) ) \lesssim (C/\eps)^{O(1)} M F * \nu_{B(0,10|v|)+10P}(y) $$
and thus we have the pointwise bound
$$ f \lesssim (C/\eps)^{O(1)} M^{3/4} r_0^{(\delta-d)/4} |v|^{3\delta/4}
F * \nu_{B(0,10|v|)+10P}.$$
Inserting this into \eqref{fu2}, we conclude that
$$ \|F * \nu_{B(0,10|v|)+10P} \|_{U^2(\R^d)}
\gtrsim (C/\eps)^{O(1)} M^{3\delta/4} |v|^{3d/4}.$$
Clearly we have
$$\|F * \nu_{B(0,10|v|)+10P} \|_{L^\infty(\R^d)} \leq \|F\|_{L^\infty(\R^d)} \leq 1$$
while from the hypothesis (i) the support of $F$ is covered by $O(M^\delta)$ balls of radius $|v|$, and hence
\begin{equation}\label{fb}
\|F * \nu_{B(0,10|v|)+10P} \|_{L^1(\R^d)} \leq \|F\|_{L^1(\R^d)} \lesssim M^\delta |v|^d.
\end{equation}
Applying Theorem \ref{inverse}(i), we conclude that
\begin{equation}\label{fog}
 \| (F * \nu_{B(0,10|v|)+10P}) 1_G \|_{U^2(\R^d)} 
\gtrsim (C/\eps)^{O(1)} M^{3\delta/4} |v|^{3d/4}
\end{equation}
where $G$ is a set of the form
$$ G \coloneqq \{ y: F * \nu_{B(0,10|v|)+10P}(y) \geq (C/\eps)^{-C_0}\}$$
for some sufficiently large $C_0$ depending only on $d,\delta$.  Note that $G$ is a compact subset of $B(y_0,100M|v|)$.  From \eqref{fb}, \eqref{fog} we conclude that
\begin{equation}\label{1g}
\|1_G \|_{U^2(B(y_0,100M|v|))} \gtrsim (C/\eps)^{O(1)} M^{3\delta/4} |v|^{3d/4}.
\end{equation}
Informally, this estimate asserts that the collection of cosets $H+y$ of the ``cylinder'' $H$ in which $(X + B(0,r_0)) \cap B(y_0, M|v|)$ has large density, itself has high additive energy.

We now split into two cases: the low-dimensional case $\delta < 1$ and the high-dimensional case $\delta > 1$ (recall that $\delta$ is assumed to be non-integer).  In the low-dimensional case we observe that for any $y \in \R^d$, the only portion of $X$ that contributes to $F * \nu_{B(0,10|v|)+10P}(y)$ lies in $B(y, 100 M |v|)$, and is thus covered by $O(M^\delta)$ balls of radius $|v|$ thanks to the hypothesis (i).  This leads to the pointwise estimate
$$ F * \nu_{B(0,10|v|)+10P}(y) \lesssim (C/\eps)^{O(1)} M^{-1} M^\delta;$$
as we are in the low-dimensional case $\delta<1$, taking $M = (C/\eps)^{C_1}$ for a sufficiently large $C_1$ (depending on $d,\delta$) sufficiently large will then imply that the set $G$ is empty, which contradicts \eqref{1g}, with $r_0$ of the required size thanks to \eqref{r0-M}.  (Informally, the point is that the set $X$ is too low dimensional to adequately fill out a coset $H+y$.)

Now suppose we are in the high-dimensional case $\delta>1$, which of course forces $d \geq 2$.  Here we shall ``quotient out'' by $P$ and use the induction hypothesis.

Let $v^\perp \coloneqq \{ x \in \R^d: x \cdot v = 0\}$ be the hyperplane in $\R^d$ orthogonal to $v$, and let $\pi \colon \R^d \to v^\perp$ be the orthogonal projection.  

As $G$ is a compact subset of $B(y_0,100M|v|)$, $\pi(G)$ is a compact subset of the disk $B_{v^\perp}(\pi(y_0), 100M|v|)$, where we use $B_{v^\perp}$ to denote the balls in the hyperplane $v^\perp$.  After applying a rigid motion to identify $v^\perp$ with $\R^{d-1}$, one can view $G$ as a subset of $\pi(G) \times [-100M|v|, 100M|v|]$, hence by \eqref{monotone}, \eqref{fub-ton}, \eqref{triv-u2} we have
$$ \|1_G \|_{U^2(B(y_0,100M|v|))} \lesssim  (C/\eps)^{O(1)} (M|v|)^{3/4} \|1_{\pi(G)} \|_{U^2(B_{v^\perp}(\pi(y_0), 100M|v|))}$$
and thus by \eqref{1g}
\begin{equation}\label{1-pig}
 \|1_{\pi(G)} \|_{U^2(B_{v^\perp}(\pi(y_0), 100M|v|))} \gtrsim  (C/\eps)^{O(1)} M^{3(\delta-1)/4} |v|^{3(d-1)/4}.
 \end{equation}
 Informally, this estimate asserts that the set $G$ (which roughly speaking tracked the translates of $H+y$ in which $(X + B(0,r_0)) \cap B(y_0, M|v|)$ had large density) continues to have large additive energy after applying the orthogonal projection $\pi$.

The set $\pi(G)$ also has ``dimension $\delta-1$'' in the scale range $[|v|, M|v|]$ in the following sense:

\begin{lemma}[$\pi(G)$ is $\delta-1$-dimensional]\label{delta}  For any $|v| \leq r_2 \leq r_1 \leq M|v|$ and any $x \in v^\perp$, the set $\pi(G) \cap B_{v^\perp}(x,r_1)$ can be covered by at most $ (C/\eps)^{O(1)} (r_1/r_2)^{\delta-1}$ balls of radius $r_2$ in $v^\perp$.
\end{lemma}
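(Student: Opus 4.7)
The plan is to convert the covering estimate on $\pi(G) \cap B_{v^\perp}(x,r_1) \subset v^\perp$ into an $r_2$-packing lower bound on a subset of $X \subset \R^d$, exploiting the defining property of $G$: namely, $y \in G$ forces the fiber of $X+B(0,r_0)$ along the line $y+\R v$ to be unusually thick. The ``fiber multiplicity'' $\sim M|v|/r_2$ produced this way will cancel the $M|v|/r_2$ factor in the tube covering bound coming from hypothesis (i), leaving the desired saving of one dimension.

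Concretely, fix $|v| \leq r_2 \leq r_1 \leq M|v|$ and let $\{\tilde y_i\}_{i=1}^N$ be a maximal $r_2$-separated subset of $\pi(G) \cap B_{v^\perp}(x,r_1)$, so that $N$ is comparable (up to a $d$-dependent constant) to the covering number to be estimated. For each $i$ pick $y_i \in G$ with $\pi(y_i) = \tilde y_i$; unfolding the bound $F * \nu_{B(0,10|v|)+10P}(y_i) \geq (C/\eps)^{-C_0}$ and using that a definite proportion of the $m$-summands in the definition of $\nu_{B(0,10|v|)+10P}$ must contribute, one obtains a set $S_i \subset \{-M,\dots,M\}$ with $|S_i| \geq (C/\eps)^{-O(1)} M$ such that $y_i - 10mv \in X + B(0,20|v|)$ for each $m \in S_i$. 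Pick witnesses $x_{i,m} \in X$ with $|x_{i,m} - (y_i - 10mv)| \leq 20|v|$; these are aligned along $\R v$ within each fiber and satisfy $\pi(x_{i,m}) \in B_{v^\perp}(\tilde y_i,20|v|)$.

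Suppose first that $r_2 \geq 80|v|$. Thin each $S_i$ to a subset $S_i'$ of indices spaced by at least $\lceil r_2/(10|v|)\rceil + 4$, so that $|S_i'| \gtrsim (C/\eps)^{-O(1)} M|v|/r_2$ and $\{x_{i,m}\}_{m \in S_i'}$ is $r_2$-separated by the triangle inequality. For $i \neq i'$, combining the $r_2$-separation of $\tilde y_i,\tilde y_{i'}$ with the projection bound $|\pi(x_{i,m})-\tilde y_i| \leq 20|v|$ yields $|x_{i,m}-x_{i',m'}| \geq r_2 - 40|v| \geq r_2/2$. Hence $\bigcup_i\{x_{i,m}\}_{m \in S_i'}$ is an $(r_2/2)$-separated subset of $X \cap T$ of cardinality at least $N \cdot (C/\eps)^{-O(1)} M|v|/r_2$, where $T := \pi^{-1}(B_{v^\perp}(x,21 r_1)) \cap B(y_0,200 M|v|)$ is a cylinder of radius $\sim r_1$ and axial length $\sim M|v|$. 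On the other hand, covering $T$ by $\lesssim M|v|/r_1$ balls of radius $O(r_1)$ along its axis and invoking hypothesis (i) on each yields an $r_2$-cover of $X \cap T$ by $\lesssim C(M|v|/r_2)(r_1/r_2)^{\delta-1}$ balls. Since $(r_2/2)$-packings are dominated by $r_2$-coverings up to $O_d(1)$ factors, the $M|v|/r_2$ factors cancel and $N \leq (C/\eps)^{O(1)}(r_1/r_2)^{\delta-1}$.

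The residual range $|v| \leq r_2 < 80|v|$ is handled by applying the previous conclusion at the scale $r_2' := 80|v|$ and observing that every $r_2'$-ball in $v^\perp$ is covered by $O_d(1)$ balls of radius $r_2$; since $\delta > 1$ in the present high-dimensional case, $(r_1/r_2')^{\delta-1} \leq (r_1/r_2)^{\delta-1}$ and the bound survives. The main obstacle is the $O(|v|)$ slack in the cross-fiber separation, which prevents a single uniform argument extending down to $r_2 = |v|$ and forces the two-regime split; the heart of the argument is the extraction of the fiber multiplicity $\gtrsim (C/\eps)^{-O(1)} M|v|/r_2$ from the convolution inequality defining $G$, together with its precise cancellation against the tube-length factor in the upper covering bound, which is what ultimately delivers the $(\delta-1)$-dimensional behaviour of $\pi(G)$.
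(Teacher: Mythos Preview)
Your argument is correct and follows essentially the same strategy as the paper: extract a fiber multiplicity $\gtrsim (C/\eps)^{-O(1)} M|v|/r_2$ from the defining inequality of $G$, contain everything in a tube $T$ of dimensions $r_1 \times M|v|$, and cancel this multiplicity against the axial factor in the covering bound for $X \cap T$ coming from hypothesis (i). The only notable difference is in bookkeeping: the paper takes a $100r_2$-separated set $\Sigma \subset \pi(G) \cap B_{v^\perp}(x,r_1)$ and works directly with the volumes of the sets $(B(y_\sigma,10|v|)+10P)\cap(X+B(0,20|v|))$, which are then automatically $\gtrsim 10r_2$-separated for all $r_2 \in [|v|,M|v|]$; this absorbs the $O(|v|)$ slack uniformly and avoids your two-regime split. (Your split also tacitly assumes $r_1 \geq 80|v|$ when invoking the first case at scale $r_2'=80|v|$; the remaining case $r_1<80|v|$ is trivial since then $r_1/r_2$ is bounded, but you should say so.)
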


\begin{proof}  Let $\Sigma$ be any $100 r_2$-separated subset of $\pi(G) \cap B_{v^\perp}(x,r_1)$.  It will suffice to show that $\# \Sigma \lesssim  (C/\eps)^{O(1)} (r_1/r_2)^{\delta-1}$.  Each $\sigma \in \Sigma$ can be written as $\pi(y_\sigma)$ for some $y_\sigma \in G$.

By construction of $G$ and $F$, we have
$$ |(B(y_\sigma,10|v|) + 10P) \cap (X + B(0, 20|v|))| \gtrsim  (C/\eps)^{O(1)} M |v|^d$$
for all $\sigma \in \Sigma$.  In particular, it requires at least $\gtrsim  (C/\eps)^{O(1)} M|v| / r_2$ balls of radius $r_2$ to cover this set.  As these sets are at least $10r_2$-separated (say) from each other as $\sigma$ varies, we conclude that the set
$$ \bigcup_{\sigma \in \Sigma} (B(y_\sigma,10|v|) + 10P) \cap (X + B(0, 20|v|))$$
requires at least $\gtrsim  (C/\eps)^{O(1)} \frac{M|v|}{r_2} \# \Sigma$ balls of radius $r_2$ to cover this set.  On the other hand, this set is contained in
\begin{equation}\label{xbb}
 (X + B(0,20|v|)) \cap B( y_0, 200M|v|) \cap \pi^{-1}( B_{v^\perp}(x,20r_1)).
 \end{equation}
 The set $B( y_0, 200M|v|) \cap \pi^{-1}( B_{v^\perp}(x,20r_1))$ (which is shaped roughly like a $r_1 \times M|v|$ cylinder) can be covered by $O( M|v| / r_1 )$ balls of radius $r_1$.  Applying hypothesis (i), we conclude that the set \eqref{xbb} can be covered by $O(  (C/\eps)^{O(1)} (M|v|/r_1) (r_1/r_2)^\delta )$ balls of radius $r_2$.  Comparing these bounds, we obtain the claim.
 \end{proof}
 
 Let $\mu'$ denote the measure
$$ \mu' \coloneqq |v|^{1-d} M^{1-\delta} m_{v^\perp}|_{\pi(G)+ B_{v^\perp}(0,|v|)}$$
where $m_{v^\perp}$ is Lebesgue measure on $v^\perp$.  From Lemma \ref{delta} (taking $r_2 = |v|$) we see that
$$ \mu'( B_{v^\perp}(y, r) ) \lesssim  (C/\eps)^{O(1)} (r / M|v|)^{\delta-1}$$
for all $r \in [|v|, M|v|]$.  Applying the induction hypothesis Proposition \ref{induct-hyp} (with $d$ replaced by $d-1$, $\delta$ replaced by $\delta-1$, and $r_0$ replaced by $1/M$) and an affine change of variable to rescale $v^\perp$ to $\R^{d-1}$ and $B_{v^\perp}(y_0,100M|v|)$ to $B^{d-1}(0, 1)$, we conclude from \eqref{affine} that
$$ \Energy(\mu', |v|) \leq \eps' M^{-(\delta-1)}$$
for any $\eps'>0$, if $M$ is sufficiently large depending on $C,\delta,d,\eps, \eps'$; indeed we can take $M$ to be of the form
$$ M = \exp\left( \exp( O( \log^{O(1)}(C/\eps\eps') ) ) \right).$$
Applying Lemma \ref{relate}(i), we conclude that
$$ \| \mu' * \nu_{B_{v^\perp}(0, |v|)} \|_{U^2(v^\perp)} \lesssim (C/\eps)^{O(1)} (\eps')^{1/4} |v|^{-(d-1)/4} M^{-(\delta-1)/4}.$$
Using the pointwise bound
$$
\mu' * \nu_{B_{v^\perp}(0, |v|)} \gtrsim (C/\eps)^{O(1)} |v|^{1-d} M^{1-\delta} 1_{\pi(G)}$$
we conclude that
$$ \| 1_{\pi(G)} \|_{U^2(v^\perp)} \lesssim (C/\eps)^{O(1)} (\eps')^{1/4} |v|^{3(d-1)/4} M^{3(\delta-1)/4}.$$
By taking $\eps'$ to equal $(\eps/C)^{C_2}$ for a sufficiently large $C_2$ depending only on $d,\delta$, we contradict \eqref{1-pig}, giving the claim, with the right size \eqref{r0-size} for $r_0$ thanks to \eqref{r0-M} and the choice of parameters $\eps',M$.

\section{Nonlinear expansion}\label{nonlinear-sec}

In this section we establish Theorem \ref{nonlinear}.  We establish the higher dimensional case $d>1$ here; the one-dimensional case $d=1$ is proven similarly.  The strategy is to localize to a small enough scale that the nonlinear function $F$ can be well approximated by a linear one, at which point one can apply the Cauchy--Schwarz inequality to control the relevant expressions by additive energies, so that Theorems \ref{main}, \ref{main-second} may be applied.

We allow all constants to depend on $\delta,d,F$.  Let $\eps>0$ be a small quantity depending on $\delta,d,F$ to be chosen later; by shrinking $r$ as necessary we may assume that $r$ is sufficiently small depending on $\eps,\delta,d,F$.  Let $y_0$ be a point in $Y$, thus $y_0 \in B(0,2)$.  We will only work in the $\eps r^{1/2}$-neighborhood of $y_0$ in $Y_r$.  Indeed, it will suffice to establish the bound
$$ |F( X_r, Y_r \cap B(y_0, \eps r^{1/2}) )| \gtrsim_{C,\eps} r^{d-\delta-\beta}.$$
By regularity, we can cover $X_r$ by $O_\eps(r^{-\delta/2})$ balls $B(x_i, \eps r^{1/2})$ of radius $\eps r^{1/2}$ with $x_i \in X \cap B(0,2)$.  Using Taylor expansion, the $C^2$ nature of $F$ and the non-vanishing of the differential map $D_x F(x,y)$, we see that any two sets
$$ F( B(x_i,\eps r^{1/2}), B(y_0, \eps r^{1/2})), F( B(x_j,\eps r^{1/2}), B(y_0, \eps r^{1/2}))$$
with $|x_i-x_j| \leq \eps$ will be disjoint unless $|x_i-x_j| \lesssim \eps r^{1/2}$.  From this we conclude that the sets
$F( B(x_i,\eps r^{1/2}), B(y_0, \eps r^{1/2}))$ have overlap $O_\eps(1)$, so it will suffice to establish the bound
$$ |F( X_r \cap B(x_0, \eps r^{1/2}), Y_r \cap B(y_0, \eps r^{1/2}) )| \gtrsim_{C,\eps} r^{d-\delta/2-\beta}$$
for each $x_0 \in X \cap B(0,2)$.

By Taylor expansion and the $C^2$ nature of $F$, for $x \in B(x_0,\eps r^{1/2})$ and $y \in B(y_0,\eps r^{1/2})$ one has
$$ F(x,y) = F(x_0,y_0) + D_x F(x_0,y_0) (x-x_0) + D_y F(x_0,y_0) (y-y_0) + O(\eps r).$$
From this and the invertibility of $D_x F(x_0,y_0), D_y F(x_0,y_0)$ we see that the set
$$
F( X_r \cap B(x_0, \eps r^{1/2}), Y_r \cap B(y_0, \eps r^{1/2}) )$$
contains the set
$$ F(x_0,y_0) + 
D_x F(x_0,y_0) ((X_{r/2} \cap B(x_0, \eps r^{1/2})) - x_0)
+ D_y F(x_0,y_0) ((Y_{r/2} \cap B(y_0, \eps r^{1/2})) -y_0)$$
so after subtracting a constant and inverting the linear transformation $D_x F(x_0,y_0)$ it will suffice to establish the bound
\begin{equation}\label{ab}
|A + B| \gtrsim_{C,\eps} r^{d-\delta/2 - \beta}
\end{equation}
where
$$ A \coloneqq X_{r/2} \cap B(x_0, \eps r^{1/2})$$
and
$$ B \coloneqq D_x F(x_0,y_0)^{-1} D_y F(x_0,y_0) (Y_{r/2} \cap B(y_0, \eps r^{1/2})).$$
From the regular nature of $X$ one sees from Definition \ref{reg-set} and standard volume-packing calculations that
$$ |A| \sim_{C,\eps} r^{d-\delta/2}$$
and a similar argument (using also the invertibility of $D_y F$ and the $C^2$ nature of $F$) gives
\begin{equation}\label{b-card}
|B| \sim_{C,\eps} r^{d-\delta/2}.
\end{equation}
In particular
$$ \| 1_A * 1_B \|_{L^1(\R^d)} \sim_{C,\eps} r^{2(d-\delta/2)}.$$
Since $1_A*1_B$ is supported on $A+B$, to prove \eqref{ab} it thus suffices by Cauchy-Schwarz to show that
$$ \| 1_A * 1_B \|_{L^2(\R^d)}^2 \lesssim_{C,\eps} r^{3(d-\delta/2)+\beta}.$$
By the Gowers-Cauchy-Schwarz inequality we have
$$\| 1_A * 1_B \|_{L^2(\R^d)}^2 \leq \|1_A \|_{U^2(\R^d)}^2 \|1_B \|_{U^2(\R^d)}^2.$$
From \eqref{young}, \eqref{b-card} we have
$$ \|1_B \|_{U^2(\R^d)}^4 \lesssim_{C,\eps} r^{3(d-\delta/2)}$$
so it will suffice to establish the bound
$$ \|1_A \|_{U^2(\R^d)}^4 \lesssim_{C,\eps} r^{3(d-\delta/2) + \beta}.$$
From the pointwise estimate
$$ 1_A \lesssim 1_{X_r \cap B(x_0,r^{1/2})} * \nu_{B(0,r)}$$ 
and Lemma \ref{relate}(i) (or the higher-dimensional version of Lemma \ref{agn}) it suffices to show that
$$ \Energy(1_{X_r \cap B(x_0,r^{1/2})} dm,r) \lesssim_{C,\eps}  r^{4d-3\delta/2 + \beta}$$
(where $dm$ is Lebesgue measure).  We rescale this as
$$ \Energy(r^{(\delta-d)/2} 1_{\tilde X_{r^{1/2}} \cap B(0,1)} dm,r) \lesssim_{C,\eps}  r^{\delta/2 + \beta}$$
where $\tilde X$ is a rescaled version of $X$:
$$ \tilde X \coloneqq \left\{ \frac{x-x_0}{r^{1/2}}: x \in X \right\}.$$
From the regularity of $X$ and a routine change of variables we check that $\tilde X$ is $\delta$-regular at scales $[r^{1/2},1]$ (with constant $C^{O(1)}$), and that $r^{(\delta-d)/2} 1_{\tilde X_{r^{1/2}} \cap B(0,1)} dm$ is an associated regular measure (again with constant $C^{O(1)}$).  The claim now follows from Theorem \ref{main-second} (adjusting the constants in the definition of $\beta$ appropriately).

\begin{remark}  The regularity hypotheses on $Y$ can be relaxed substantially; in fact with a little more effort one could replace $Y_r$ here by any subset of $B(0,1)$ of measure $\gtrsim_{C,\delta} r^{d-\delta}$.  We leave the details to the interested reader.
\end{remark}

\section{From additive energy to the fractal uncertainty principle}\label{fract-sec}

We now prove Theorem \ref{add-eng}.  Let the notation and hypotheses be as in that theorem.  Clearly we have
$$ \| {\mathcal F}_h 1_{Y_h} \|_{L^1(\R^d) \to L^\infty(\R^d)} \lesssim_d h^{-d/2}$$
so by the Riesz--Thorin theorem it suffices to show that
$$ \| {\mathcal F}_h 1_{Y_h} \|_{L^\infty(\R^d) \to L^4(\R^d)} \lesssim_d h^{d/2-3\delta/4+2\beta}.$$
Let $f \in L^\infty(\R)$ be of norm one.  From \eqref{u24-v} and a rescaling we have
$$
\| {\mathcal F}_h (f 1_{Y_h}) \|_{L^4(\R^d)} \sim_d h^{d/4} \| f 1_{Y_h} \|_{U^2(\R^d)}.$$ 
As $Y$ is $\delta$-regular, we have the pointwise bound
$$ f 1_{Y_h} \lesssim_d C h^{d-\delta} \mu_Y * \nu_{B(0,2h)}$$
and hence 
$$\| {\mathcal F}_h (f 1_{Y_h}) \|_{L^4(\R^d)} \lesssim_d C h^{3d/4-\delta} \| \mu_Y * \nu_{B(0,2h)} \|_{U^2(\R^d)}.$$
Applying Lemma \ref{relate}(i) we conclude that
$$\| {\mathcal F}_h (f 1_{Y_h}) \|_{L^4(\R^d)} \lesssim_d C h^{d/2-\delta} \Energy( \mu_Y, \delta )^{1/4}$$
and the claim \eqref{fey} now follows from Theorems \ref{main}, \ref{main-second}, after adjusting $\beta$ as necessary.  The claim \eqref{fey-2} then follows from H\"older's inequality after observing from the $\delta'$-regularity of $X$ that
$$ |X_h| \lesssim_{C',\delta',d} h^{d-\delta'}.$$

%%      ---------------------------------------------------------------------
%%      --------------------------- BIBLIOGRAPHY ----------------------------
%%      ---------------------------------------------------------------------
%% PUT HERE THE BIBLIOGRAPHY IN YOUR FAVOURITE FORMAT
%% Please check that the format of the bibliography is uniform and coherent


\begin{thebibliography}{10}

\bibitem{bourgain}
J. Bourgain, \emph{On the Erd\H{o}s--Volkmann and Katz--Tao ring conjectures}, Geom. Funct. Anal. \textbf{13} (2003), no. 2, 334--365.

\bibitem{bourgain-dyatlov}
J. Bourgain, S. Dyatlov, \emph{Spectral gaps without the pressure condition}, Ann. of Math. (2) \textbf{187} (2018), no. 3, 825--867.

\bibitem{croot-sisask}
E. Croot, O. Sisask, \emph{A probabilistic technique for finding almost-periods of convolutions}, Geom. Funct. Anal. \textbf{20} (2010), no. 6, 1367--1396.

\bibitem{dyatlov-survey}
S. Dyatlov, \emph{An introduction to fractal uncertainty principle}, J. Math. Phys. \textbf{60} (2019), no. 8, 081505, 31 pp.

\bibitem{dyatlov-jin}
S. Dyatlov, L. Jin, \emph{Dolgopyat's method and the fractal uncertainty principle}, Anal. PDE \textbf{11} (2018), no. 6, 1457--1485.

\bibitem{dyatlov-zahl}
S. Dyatlov, J. Zahl,  \emph{Spectral gaps, additive energy, and a fractal uncertainty principle}, Geom. Funct. Anal. \textbf{26} (2016), no. 4, 1011--1094.

\bibitem{eisner}
T. Eisner, T. Tao, \emph{Large values of the Gowers--Host--Kra seminorms}, J. Anal. Math. \text    bf{117} (2012), 133--186. 

\bibitem{fraser-howroyd-yu}
J. Fraser, D. Howroyd, H. Yu, \emph{Dimension growth for iterated sumsets}, Math. Z. \textbf{293} (2019), no. 3--4, 1015--1042.

\bibitem{gowers}
W. T. Gowers, \emph{A new proof of Szemer\'edi's theorem}, Geom. Funct. Anal. \textbf{11} (2001), no. 3, 465--588. 

\bibitem{gkz}
L. Guth, N. Katz, J. Zahl, \emph{On the discretized sum-product problem}, preprint. {\tt arXiv:1804.02475}

\bibitem{han-schlag}
R. Han, W. Schlag, \emph{A higher-dimensional Bourgain--Dyatlov fractal uncertainty principle}, Anal. PDE \textbf{13} (2020), no. 3, 813--863.

\bibitem{hochman}
M. Hochman, \emph{On self-similar sets with overlaps and inverse theorems for entropy}, Ann. of Math. \textbf{180} (2014), no. 2, 773--822.  Updated at {\tt arXiv:1503.09043}

\bibitem{jin-zhang}
L. Jin, R. Zhang, \emph{Fractal uncertainty principle with explicit exponent}, Math. Ann. \textbf{376} (2020), no. 3--4, 1031--1057.

\bibitem{rz}
O. Raz, J. Zahl, \emph{Dimension-expanding polynomials and the discretized Elekes--Ronyai theorem}, preprint. {\tt arXiv:2010.04845}

\bibitem{rossi-shmerkin}
E. Rossi, P. Shmerkin, \emph{On measures that improve $L^q$ dimension under convolution}, Rev. Mat. Iberoam. \textbf{36} (2020), no. 7, 2217--2236.


\bibitem{sanders}
T. Sanders, \emph{On a non-abelian Balog--Szemer\'edi-type lemma}, J. Aust. Math. Soc. \textbf{89} (2010), 127--132.

\bibitem{sanders-br}
T. Sanders, \emph{On the Bogolyubov--Ruzsa lemma}, Anal. PDE \textbf{5} (2012), no. 3, 627--655.

\bibitem{sanders-bams}
T. Sanders, \emph{The structure theory of set addition revisited}, Bull. Amer. Math. Soc. (N.S.) \textbf{50} (2013), no. 1, 93--127.

\bibitem{shmerkin}
P. Shmerkin, \emph{On Furstenberg's intersection conjecture, self-similar measures, and the $L^q$ norms of convolutions},
Ann. of Math. (2) \textbf{189} (2019), no. 2, 319--391.

\bibitem{tao-product}
T. Tao, \emph{Product set estimates for non-commutative groups}, Combinatorica \textbf{28} (2008), no. 5, 547--594.

\bibitem{tao-vu}
T. Tao, V. Vu, Additive combinatorics. Cambridge Studies in Advanced Mathematics, 105. Cambridge University Press, Cambridge, 2006.
 
\end{thebibliography}
\end{document}